\documentclass{amsart}
\pdfoutput=1
\usepackage{amsmath}
\usepackage[alphabetic]{amsrefs}
\usepackage{microtype}
\usepackage{lmodern}
\usepackage{hyperref}

\theoremstyle{plain}
\newtheorem{theorem}[subsection]{Theorem}
\newtheorem{proposition}[subsection]{Proposition}
\newtheorem{lemma}[subsection]{Lemma}
\newtheorem{corollary}[subsection]{Corollary}

\theoremstyle{definition}
\newtheorem{definition}[subsection]{Definition}
\newtheorem{example}[subsection]{Example}

\newtheorem{question}[subsection]{Question}

\theoremstyle{remark}
\newtheorem{remark}[subsection]{Remark}
\newtheorem{remarks}[subsection]{Remarks}

\numberwithin{equation}{section}

% \newcommand{\be}{\begin{equation}}
% \newcommand{\ee}{\end{equation}}
% \newcommand{\bea}{\begin{align}}
% \newcommand{\eea}{\end{align}}
% \newcommand{\bean}{\begin{align*}} 
% \newcommand{\eean}{\end{align*}}
% \newcommand{\innerl}{\langle}
% \newcommand{\innerr}{\rangle}
% \newcommand{\brray}{\begin{array}}
% \newcommand{\erray}{\end{array}}
% \newcommand{\ben}{\begin{equation}{nonumber}}
% \newcommand{\een}{\end{equation}{nonumber}}

%\newcommand{\IC}{{I\! \! \!\! C}}

%\newcommand{\IJ}{{I\! \! J}}

%\newcommmand{\IN} {\mathbb{N}}

%\newcommand{\IQ}{{I\! \! Q}}
%\newcommand{\IR}{{I \!\! R}}

%\newcommand{\IT}{{I\! \! T}}

%\newcommand{\IZ}{{\ \! \! Z}}

%\newcommand\myeq{\stackrel{\mathclap{\normalfont\mbox{by lemma}}}{=}}
%%%%%%%%%%%%%%%%%%%%%%%%%%%%%%%%%%%%%%%%%%%%%%%%%%%%%%%%%%%%%

%%%%%%%%%%%%%% ABBREVIATIONS %%%%%%%%%%%%%%%%%%%%%%%%%%%%%%%%

\newcommand{\cal}{\mathcal}

\newcommand{\clc}{{\cal C}}
\newcommand{\cld}{{\cal D}}

\newcommand{\clg}{{\cal G}}

\newcommand{\clk}{{\cal K}}
\newcommand{\cll}{{\cal L}}

\newcommand{\clo}{{\cal O}}

\def\a*{{\cal A}_{h,*}}
\def\B{{\cal B}(h)}
\def\B1{{\cal B}_1(h)}
\def\b{{\cal B}^{\rm s.a.}(h)}
\def\b1{{\cal B}^{\rm s.a.}_1(h)}

\newcommand{\ot}{\otimes}

\newcommand{\raro}{\rightarrow}

\def\a*{{\cal A}_{h,*}}
\def\B{{\cal B}(h)}
\def\B1{{\cal B}_1(h)}
\def\b{{\cal B}^{\rm s.a.}(h)}
\def\b1{{\cal B}^{\rm s.a.}_1(h)}

\begin{document}

\title[]{Equivariant $\textrm{C}^*$-correspondences and compact quantum group actions on Pimsner algebras}
\author[Bhattacharjee]{Suvrajit Bhattacharjee}
% \address{Mathematical Institute of Charles University, Sokolovsk\'a 83, Prague, Czech Republic}
% \email{bhattacharjee@karlin.mff.cuni.cz}
\address{Matematisk institutt, Universitetet i Oslo, P.O. Box 1053, Blindern, 0316 Oslo, Norway}
\email{suvrajib@math.uio.no}

\author[Joardar]{Soumalya Joardar}
\address{Department of Mathematics and Statistics, Indian Institute of Science Education and Research Kolkata, Mohanpur - 741246, West
	Bengal, India}
\email{soumalya@iiserkol.ac.in}

\begin{abstract}
Let $G$ be a compact quantum group. We show that given a $G$-equivariant $\textrm{C}^*$-correspondence $E$, the Pimsner algebra $\mathcal{O}_E$ can be naturally made into a $G$-$\textrm{C}^*$-algebra. We also provide sufficient conditions under which it is guaranteed that a $G$-action on the Pimsner algebra $\mathcal{O}_E$ arises in this way, in a suitable precise sense. When $G$ is of Kac type, a \textup{KMS} state on the Pimsner algebra, arising from a quasi-free dynamics, is $G$-equivariant if and only if the tracial state obtained from restricting it to the coefficient algebra is $G$-equivariant, under a natural condition. We apply these results to the situation when the $\textrm{C}^*$-correspondence is obtained from a finite, directed graph and draw various conclusions on the quantum automorphism groups of such graphs, both in the sense of Banica and Bichon.
\end{abstract}

\subjclass{81R50; 46L89.}

\keywords{$\mathrm{C}^*$-correspondences; Pimsner algebras; \textup{KMS} states; compact quantum groups; graph $\mathrm{C}^*$-algebras.}

\maketitle

\section{Introduction}
In his seminal paper \cite{Pimsner}, Pimsner introduced a class of $\mathrm{C}^*$-algebras, now referred to as Pimsner algebras, that simultaneously generalizes crossed products by $\mathbb{Z}$ and Cuntz-Krieger algebras. The starting point of his construction is, in his terminology, a Hilbert bimodule $(E,\phi)$ over a $\mathrm{C}^*$-algebra $A$; this means that $E$ is a right Hilbert $A$-module together with an isometric $*$-homomorphism $\phi : A \rightarrow \mathcal{L}(E)$ from $A$ to the $\mathrm{C}^*$-algebra $\mathcal{L}(E)$ of adjointable operators on $E$, that provides the left $A$-module structure on $E$. When $E$ is full as a right Hilbert $A$-module, Pimsner went on to obtain, in analogy with the Toeplitz extension proof of the Bott periodicity and the celebrated Pimsner-Voiculescu sequence, in topological $\operatorname{K}$-theory, a six-term sequence in $\operatorname{KK}$-theory, relating the $\operatorname{KK}$-groups of the Pimsner algebra to the $\operatorname{KK}$-groups of the $\mathrm{C}^*$-algebra $A$. In fact, an essential ingredient in the proof is the existence of a Toeplitz extension associated to the Pimsner algebra. 

Since their introduction, Pimsner algebras have been the subject of intense study, from various points of view; even more so, after Katsura (\cite{katsura1}) generalized the construction, removing the injectivity condition on $\phi$. Thus Pimsner's construction, as generalized by Katsura, now works for any $(E,\phi)$ consisting of a right Hilbert $A$-module $E$ and a $*$-homomorphism $\phi : A \rightarrow \mathcal{L}(E)$ from $A$ to the $\mathrm{C}^*$-algebra $\mathcal{L}(E)$. Such a pair is now referred to as a $\mathrm{C}^*$-correspondence over $A$. This generalization led Katsura (\cite{katsura}) to further extend the already extensive list of $\mathrm{C}^*$-algebras that fall in this class, by incorporating graph $\mathrm{C}^*$-algebras; moreover, it was shown in \cite{katsura2003} that Pimsner algebras also cover what was called crossed product by Hilbert $\mathrm{C}^*$-bimodules in \cite{Abadie1998}. It is to be noted that such a generalization was already considered in the beautiful paper \cite{muhly}, also providing a plentiful of examples.

Leveraging the flexibility of Pimsner's construction, many structural properties of the Pimsner algebra $\mathcal{O}_E$ may be studied through a sound grip on the algebra $A$ and the Hilbert $A$-module $E$. Adapting such a viewpoint, \cites{katideal,KPW} studied the ideal structure of the Pimsner algebras. Various approximation properties are studied, for example, in \cite{sz2010}. As already mentioned above, \cites{Abadie1998,Abadie2009} studies Morita equivalence of such algebras. Continuing along the lines initiated by Pimsner and Katsura, \cite{schaf} studies $\operatorname{K}$-theory of crossed products of Pimsner algebras. Crossed products as well as (co)actions of groups are also the subject of study in \cites{haong,kalis}, which we shall return to in a moment, as they form the main theme of the present article. \cite{muhly} exploits dilation theory and views Pimsner algebras as the $\mathrm{C}^*$-envelopes of the tensor algebra of the correspondence $(E,\phi)$. \cite{Neshveyev} provides a detailed study of \textup{KMS} states (and weights) on the Pimsner algebras, a topic which we will again return to in a moment. Providing a connection with the theory of quantum principal bundles, \cite{Gysin} exhibits a class of natural examples arising from $q$-deformations as Pimsner algebras.  Let us mention also the recent \cite{rrs}, that makes contact with Connes' program \cite{connes}, studying Poincar\'e duality of Pimsner algebras. Finally, the recent preprint \cite{karen} studies correspondences over commutative algebras and associated Pimsner algebras from the point of view of Elliott's program.

At the end of his paper, Pimsner remarks (\cite{Pimsner}*{Remark 4.10}), that
all his constructions are equivariant, under an action of a locally compact,
second countable group. The action of the group on the
$\mathrm{C}^*$-correspondence is to be taken in the sense of Kasparov
(\cite{kasparov}). The details of the remark appear in \cite{haong} where the
authors consider actions of  amenable locally compact groups and show that for
such groups, an equivariant $\mathrm{C}^*$-correspondence induces a natural
action on the Pimsner algebra. Moreover, the crossed product can be identified
as the Pimsner algebra of the crossed product
$\mathrm{C}^*$-correspondence. Continuing along this line, the authors of \cite{kalis} consider coactions of groups on $\mathrm{C}^*$-correspondences and
prove a similar result to that of \cite{haong}; see also \cite{BKQR2015}. The
desire to extend these results to the quantum setting, i.e., in the situation
where we have a quantum group instead of a group, is one of the major
motivations of the present article; and this brings us to the next paragraph.  

Introduced by Woronowicz in his seminal paper \cite{Woro}, compact quantum groups are now well established in Connes' approach to noncommutative geometry. The landmark discovery of $\textup{SU}_q(2)$ by Woronowicz together with the dream of making contact with Connes' enterprise, resulted, following Wang's pioneering work on quantum symmetries of finite spaces (\cite{Wang}), in several constructions and insights. Let us mention, albeit incompletely, the work of 
\begin{itemize}
    \item Banica, Bichon and collaborators on quantum symmetries of discrete structures, see \cites{Ban,banicametric,Bichon};
    \item Goswami, Bhowmick and collaborators on quantum isometries of spectral triples, see \cites{goswamiadv,GJ2018,bg2019,BG2009a};
    \item Banica, Skalski and collaborators on quantum symmetries of $\mathrm{C}^*$-algebras equipped with orthogonal filtrations, see \cites{bs2013,bmrs2019};
    \item and more recently, Goswami and collaborators on quantum symmetries of subfactors, see \cites{bcg2022}. 
\end{itemize}
The study of quantum symmetries of $\mathrm{C}^*$-algebras have also been
rewarding enough. Indeed, for example, it is well-known that there is no ergodic
action of a compact group on the Cuntz-algebra $\clo_{n}$; however,
$\mathcal{O}_n$ admits an ergodic action of a compact quantum group, namely, the
(quasi-free action of the) free unitary quantum group, turning $\clo_{n}$ into a
quantum homogeneous space. Similar richness of quantum symmetries has been
observed in other contexts as well. For example, compact quantum groups have
been found to preserve fewer \textup{KMS} states on certain graph
$\mathrm{C}^*$-algebras as opposed to compact group actions
(\cite{Joardar1}). As a necessarily incomplete list of references for the reader
interested in this direction, we mention \cites{G2014,GW2016,K2017,P1997}.

Keeping in mind the richness of the two camps - Pimsner algebras at one hand and actions of compact quantum groups on the other, we combine the two in the present article. Thus we study compact quantum group actions on Pimsner algebras, the underlying philosophy being the same as mentioned above, i.e., studying such actions through actions on the $\mathrm{C}^*$-correspondence. To carry out this program, however, we would need a notion of equivariant $\mathrm{C}^*$-correspondences under the action of a  compact quantum group. This is based on the fundamental work of Baaj and Skandalis \cite{BS}, where the authors generalize Kasparov's equivariant $\operatorname{KK}$-theory (\cite{kasparov}) to the setting where there is no group anymore but a Hopf $\mathrm{C}^*$-algebra. Having a notion of equivariant $\mathrm{C}^*$-correspondences at hand, our first theorem reads as follows.

\begin{theorem}
\label{thm:maintheoremintro}
Let $G$ be a compact quantum group, $(A,\alpha)$ be a unital $G$-$\mathrm{C}^*$-algebra and  $(E,\phi,\lambda)$ be a $G$-equivariant $\mathrm{C}^*$-correspondence over the $G$-$\mathrm{C}^*$-algebra $(A,\alpha)$. Assume further that the Hilbert $A$-module $E$ is finitely generated and projective.	Then there is a unique unital $*$-homomorphism \[\omega : \mathcal{O}_E \rightarrow \mathcal{O}_E \otimes \mathrm{C}(G)\] such that \[\omega \circ k_E=(k_E \otimes \mathrm{id}_{\mathrm{C}(G)}) \circ \lambda, \text{ and, } \omega \circ k_A=(k_A \otimes \mathrm{id}_{\mathrm{C}(G)})\circ \alpha.\]Moreover, the pair $(\mathcal{O}_E,\omega)$ is a $G$-$\mathrm{C}^*$-algebra. Here, $\mathcal{O}_E$ denotes the Pimsner algebra associated to $(E,\phi)$; $(k_E,k_A)$ is the defining universal covariant representation of $\mathcal{O}_E$; $\alpha$ is the $G$-action on $A$ and $\lambda$ is the $G$-action on $E$.
\end{theorem}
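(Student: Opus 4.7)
The plan is to invoke the universal property of the Pimsner algebra $\mathcal{O}_E$: it is the initial $\mathrm{C}^*$-algebra equipped with a Katsura-covariant representation $(k_A, k_E)$ of $(E, \phi)$. To construct $\omega$, I will produce a covariant representation of $(E, \phi)$ into $\mathcal{O}_E \otimes \mathrm{C}(G)$. The natural candidate is
$$K_A := (k_A \otimes \mathrm{id}_{\mathrm{C}(G)}) \circ \alpha, \qquad K_E := (k_E \otimes \mathrm{id}_{\mathrm{C}(G)}) \circ \lambda,$$
after which the required $\omega$ is uniquely determined by universality.

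The $*$-homomorphism, bimodule, and inner-product axioms for $(K_A, K_E)$ follow by a direct unraveling of the equivariance axioms built into the definition of a $G$-equivariant correspondence: $\alpha$ is a $*$-homomorphism, $\lambda$ intertwines the two $A$-actions on $E$, and $\lambda$ is isometric for the induced $(A \otimes \mathrm{C}(G))$-valued inner product. The substantive step is Katsura's covariance condition $K_A(a) = \psi_{K_E}(\phi(a))$ for $a$ in the Katsura ideal $J_E$. This is where the finite-generation/projectivity hypothesis enters decisively: it forces $\mathcal{K}(E) = \mathcal{L}(E)$ (so $J_E = (\ker\phi)^\perp$), and guarantees that $\lambda$ induces a well-defined coaction $\lambda_{\mathcal{K}}$ on $\mathcal{K}(E)$ satisfying $\lambda_{\mathcal{K}} \circ \phi = (\phi \otimes \mathrm{id}) \circ \alpha$. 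Combining this intertwining with the universal identity $k_A(a) = \psi_{k_E}(\phi(a))$ and applying $(\,\cdot\, \otimes \mathrm{id}_{\mathrm{C}(G)})$ yields the desired equality in $\mathcal{O}_E \otimes \mathrm{C}(G)$. Coassociativity $(\omega \otimes \mathrm{id}) \circ \omega = (\mathrm{id} \otimes \Delta) \circ \omega$ then follows from another application of universality: both sides are unital $*$-homomorphisms, and after precomposition with $k_A$ and $k_E$ they agree by the coassociativity of $\alpha$ and of $\lambda$, respectively.

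The principal technical obstacle is the Podle\'s density $\overline{\omega(\mathcal{O}_E)(1 \otimes \mathrm{C}(G))} = \mathcal{O}_E \otimes \mathrm{C}(G)$. Let $V$ denote the closure on the left. Applying $(k_A \otimes \mathrm{id})$ to the density for $\alpha$ gives $V \supseteq k_A(A) \otimes \mathrm{C}(G)$; applying $(k_E \otimes \mathrm{id})$ to the analogous density for $\lambda$ (which holds for $E$ finitely generated projective, via a corepresentation-type decomposition of the equivariant module) gives $V \supseteq k_E(E) \otimes \mathrm{C}(G)$. Since $V$ is stable under left multiplication by $\omega(\mathcal{O}_E)$ and right multiplication by $1 \otimes \mathrm{C}(G)$, and since $\mathcal{O}_E$ is generated as a $\mathrm{C}^*$-algebra by $k_A(A) \cup k_E(E)$, this density propagates to all monomials in the generators; the adjoint generators $k_E(\xi)^*$ are treated by the symmetric density $\overline{(1 \otimes \mathrm{C}(G))\omega(\mathcal{O}_E)} = \mathcal{O}_E \otimes \mathrm{C}(G)$, which coincides with the Podle\'s density by the cancellation property of compact quantum groups.
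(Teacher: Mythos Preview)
Your overall architecture---build a covariant representation $(K_A,K_E)$ into $\mathcal{O}_E\otimes\mathrm{C}(G)$ and invoke universality, then deduce coassociativity by another appeal to universality---is exactly the paper's. The representation axioms and the coassociativity step are handled identically.

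The covariance step, however, contains a genuine gap. Your argument amounts to the chain
\[
\psi_{K_E}(\phi(a))=\bigl((\psi_{k_E}\circ\phi)\otimes\mathrm{id}\bigr)(\alpha(a)),\qquad
K_A(a)=(k_A\otimes\mathrm{id})(\alpha(a)),
\]
together with $\psi_{k_E}\circ\phi=k_A$ on $J_E=(\ker\phi)^\perp$. For these to agree at $a\in J_E$ you need $\alpha(a)\in J_E\otimes\mathrm{C}(G)$, i.e.\ invariance of the Katsura ideal under $\alpha$. You have not established this, and it is precisely the extra hypothesis that the paper takes pains to avoid (and that it attributes to Kim's approach in the remarks following the proof). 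Whether or not such invariance can be proved from equivariance of $\phi$ alone is a nontrivial question about annihilator ideals under compact quantum group coactions; in any case your sketch does not address it.

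The paper sidesteps this entirely by proving the \emph{stronger} identity $\psi_t\circ\phi=\pi$ on all of $A$, not just on $J_E$. The mechanism is: from $\psi_t(\phi(a))t(\xi)=\pi(a)t(\xi)$ one uses the Podle\'s density for $\lambda$ to pass to $\psi_t(\phi(a))(k_E(\xi)\otimes 1)=\pi(a)(k_E(\xi)\otimes 1)$, then multiplies on the right by $k_E(\eta)^*\otimes 1$ to reach $\psi_t(\phi(a))(\psi_{k_E}(k)\otimes 1)=\pi(a)(\psi_{k_E}(k)\otimes 1)$ for all $k\in\mathcal{K}(E)$, and finally plugs in $k=\mathrm{id}_E$ (legitimate because $E$ finitely generated projective forces $\mathcal{K}(E)=\mathcal{L}(E)$). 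This is where the finite-generation hypothesis actually does its work in the paper's argument---not merely to identify $J_E$, but to furnish a unit in $\mathcal{K}(E)$ that lets one cancel.

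Your treatment of the Podle\'s condition is close in spirit to the paper's. The paper packages it as: the set $S=\{x:x\otimes 1\in[\omega(\mathcal{O}_E)(1\otimes\mathrm{C}(G))]\}$ is multiplicatively closed and contains $k_E(E)\cup k_A(A)$, hence equals $\mathcal{O}_E$. Your invocation of the symmetric density and the cancellation law to handle $k_E(\xi)^*$ is a reasonable alternative, though you should be aware that the equivalence of the two one-sided densities is itself a (standard but non-trivial) fact about compact quantum group coactions.
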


The above theorem also leads one, naturally, to seek for a possible converse to the theorem. However, to identify the precise formulation of a converse, if at all possible, requires some work. To explain in more detail, let us make the following definition. 

\begin{definition}
\label{def:liftedactionintro}
Let $A$ be a unital $\mathrm{C}^*$-algebra, $(E,\phi)$ be a $\mathrm{C}^*$-correspondence over $A$ (where $E$ is assumed to be finitely generated and projective), and $G$ be a compact quantum group. An action $\rho : \mathcal{O}_E \rightarrow \mathcal{O}_E \otimes \mathrm{C}(G)$ of $G$ on the Pimsner algebra $\mathcal{O}_E$ is said to be a lift if there are $G$-actions $\alpha$ and $\lambda$ on $A$ and on $E$, respectively, such that the following are satisfied.
	\begin{itemize}
	\item $(A,\alpha)$ is a $G$-$\mathrm{C}^*$-algebra;
	\item $(E,\phi,\lambda)$ is a $G$-equivariant $\mathrm{C}^*$-correspondence over the $G$-$\mathrm{C}^*$-algebra $(A,\alpha)$;
	\item $\rho$ coincides with $\omega$ as in Theorem \ref{thm:maintheoremintro}.
\end{itemize}
\end{definition}

Now we can state the question in precise terms.

\begin{question}
\label{que:lift}
Given an action $\rho$ of a compact quantum group $G$ on the Pimsner algebra $\mathcal{O}_E$, is $\rho$ always a lift?
\end{question}

The answer to the above question is no, however, and counter-examples exist even for group actions. And this leads to our next theorem.

\begin{theorem}
\label{thm:nonlinearactionintro} 
The action $\rho$ of $\mathbb{T}^n$ on $\clo_{n}$ given by
\[
\rho : \mathcal{O}_n \rightarrow \mathcal{O}_n \otimes \mathrm{C}(\mathbb{T}^n), \quad \rho(S_{i})=(S_{i} \otimes 1_{\mathrm{C}(\mathbb{T}^n)})u,
\]is not a lift. Here, $\mathcal{O}_n$ is the Cuntz algebra on $n$-generators, the generators being $S_i$, $i=1,\dots,n$ and $u$ is the element $\sum_{k=1}^{n}S_{k}S_{k}^*\ot z_{k} \in \clo_{n}\ot \mathrm{C}(\mathbb{T}^{n})$.
\end{theorem}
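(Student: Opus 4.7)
The plan is to identify $\clo_{n}$ as the Pimsner algebra of a concrete $\mathrm{C}^{*}$-correspondence, unravel what being a lift would mean for the particular $\rho$ in the statement, and then extract a contradiction by coefficient-peeling against the Cuntz relations. Concretely, $\clo_{n}$ is the Pimsner algebra of the correspondence $(E,\phi)$ over $A=\IC$, where $E=\IC^{n}$ is the standard Hilbert $\IC$-module, $\phi$ is the unital inclusion of scalars, and the universal covariant representation is given by $k_{A}(1)=1$ and $k_{E}(e_{i})=S_{i}$ with $\{e_{i}\}_{i=1}^{n}$ the standard basis.

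Assume, for contradiction, that $\rho$ is a lift in the sense of Definition \ref{def:liftedactionintro}. Since $A=\IC$, the only coaction of $\IT^{n}$ on $A$ is the trivial one, and a coaction $\lambda$ on $E=\IC^{n}$ amounts to the choice of a unitary matrix $v=(v_{ji})\in M_{n}(\mathrm{C}(\IT^{n}))$ via $\lambda(e_{i})=\sum_{j}e_{j}\ot v_{ji}$. The relation $\omega\circ k_{E}=(k_{E}\ot\id)\circ\lambda$ from Theorem \ref{thm:maintheoremintro} then forces
\[
\rho(S_{i})=\sum_{j=1}^{n}S_{j}\ot v_{ji},\qquad i=1,\dots,n.
\]
On the other hand, the definition of $\rho$ in the statement expands as $\rho(S_{i})=\sum_{k=1}^{n}S_{i}S_{k}S_{k}^{*}\ot z_{k}$.

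Equating these two expressions, I would then isolate each $v_{\ell i}$ by exploiting the Cuntz relations $S_{\ell}^{*}S_{j}=\dlt_{\ell j}$, $S_{k}^{*}S_{m}=\dlt_{km}$, and $S_{m}^{*}S_{m}=1$. Multiplying the equality $\sum_{j}S_{j}\ot v_{ji}=\sum_{k}S_{i}S_{k}S_{k}^{*}\ot z_{k}$ on the right by $S_{m}\ot 1$ collapses the right hand side to $S_{i}S_{m}\ot z_{m}$, while the left becomes $\sum_{j}S_{j}S_{m}\ot v_{ji}$. Multiplying once more on the left by $S_{\ell}^{*}\ot 1$ then isolates a single matrix entry and yields
\[
S_{m}\ot v_{\ell i}=\dlt_{\ell i}\,S_{m}\ot z_{m}
\]
in $\clo_{n}\ot\mathrm{C}(\IT^{n})$ for every $\ell,i,m\in\{1,\dots,n\}$. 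Since $S_{m}\neq 0$ and the minimal tensor product is injective (as $\mathrm{C}(\IT^{n})$ is nuclear), this gives $v_{\ell i}=\dlt_{\ell i}z_{m}$ for every $m$; taking $\ell=i$ forces $z_{1}=\cdots=z_{n}$, which is absurd for $n\geq 2$.

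The argument is essentially routine, with no real obstacle. The only delicate point is making the final coefficient-extraction rigorous, but this follows immediately from the linear independence of the $S_{m}$ in $\clo_{n}$ and the injectivity of the minimal tensor product.
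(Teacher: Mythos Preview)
Your coefficient-peeling argument for the ``not a lift'' part is correct and in fact more careful than the paper's own treatment, which simply declares $\rho$ to be ``clearly non-linear'' in the sense that $\rho(S_i)$ is not of the form $\sum_j S_j\ot q_{ij}$. Your manipulation with $S_\ell^*$ on the left and $S_m$ on the right is the right way to make that observation rigorous, and the final contradiction $z_1=\cdots=z_n$ (for $n\ge 2$, which you should state explicitly) is clean.

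However, there is a genuine gap: you never verify that $\rho$ \emph{is} an action of $\IT^n$ on $\clo_n$ in the first place. The statement asserts that $\rho$ is an action which fails to be a lift, and Definition~\ref{def:liftedactionintro} only applies to genuine $G$-actions; without this, the conclusion ``not a lift'' is vacuous. Concretely, you must check that the prescription $S_i\mapsto (S_i\ot 1)u$ extends to a well-defined unital $*$-homomorphism (i.e.\ that the Cuntz relations $\rho(S_i)^*\rho(S_j)=\delta_{ij}1\ot 1$ and $\sum_k\rho(S_k)\rho(S_k)^*=1\ot 1$ hold, which comes down to $u$ being unitary), that $\rho$ is coassociative (here one uses $\rho(S_iS_j^*)=S_iS_j^*\ot 1$ together with $\Delta(z_k)=z_k\ot z_k$), and that the Podle\'s condition holds (for which one shows $S_i\ot 1\in[\rho(\clo_n)(1\ot\mathrm{C}(\IT^n))]$ via $\sum_k\rho(S_iS_kS_k^*)(1\ot z_k^*)=S_i\ot 1$). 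This is precisely where the paper spends almost all of its effort; the two halves of the proof are complementary, and yours is missing the one the paper regards as the substantive part.
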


Nevertheless, we are able to answer Question \ref{que:lift} positively if we restrict ourselves to the class of Pimsner algebras that are considered in \cite{Gysin}, i.e., quantum principal $\mathbb{T}$-bundles, as stated in the following theorem.

\begin{theorem}
\label{thm:sufficiencyintro}
Let $(A,\gamma)$ be a unital $\mathbb{T}$-$\mathrm{C}^*$-algebra such that
\begin{itemize}
	\item the $\mathbb{T}$-action $\gamma$ is principal;
	\item the fixed point algebra $A(0)$ is separable;
	\item the spectral subspaces $A(1)$ and $A(-1)$ are full over $A(0)$,
\end{itemize}so that there is an isomorphism $\mathcal{O}_{A(1)} \cong A$. Let $G$ be a compact quantum group and $\rho : A \rightarrow A \otimes \mathrm{C}(G)$ be a gauge-equivariant $G$-action on $A$ in the sense that for all $z \in \mathbb{T}$, 
\[(\gamma_z \otimes \mathrm{id}_{\mathrm{C}(G)})\circ \rho=\rho \circ \gamma_z.
\] Then $\rho$ is a lift in the sense of Definition \textup{\ref{def:liftedactionintro}}.
\end{theorem}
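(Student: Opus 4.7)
The plan is to decompose $\rho$ along the spectral subspaces of the gauge action $\gamma$, extract from this decomposition an action $\alpha$ on $A(0)$ and an action $\lambda$ on $A(1)$ that together satisfy the hypotheses of Theorem~\ref{thm:maintheoremintro}, and finally identify the resulting $\omega$ with the given $\rho$ via the universal property of $\mathcal{O}_{A(1)}$.

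First, from the gauge-equivariance $(\gamma_z \otimes \mathrm{id}_{\mathrm{C}(G)}) \circ \rho = \rho \circ \gamma_z$, and the identification of the $n$-th spectral subspace of $\gamma_z \otimes \mathrm{id}$ on $A \otimes \mathrm{C}(G)$ with $A(n) \otimes \mathrm{C}(G)$, one obtains $\rho(A(n)) \subseteq A(n) \otimes \mathrm{C}(G)$ for every $n \in \mathbb{Z}$. Set $\alpha := \rho|_{A(0)}$ and $\lambda := \rho|_{A(1)}$. The $*$-homomorphism property and coaction identity for $\alpha$ are inherited from $\rho$. To verify the Podle\'s density condition, use the canonical conditional expectation $P_0 : A \to A(0)$, $P_0(a) = \int_{\mathbb{T}} \gamma_z(a) \, dz$; gauge-equivariance and continuity of $\rho$ give $(P_0 \otimes \mathrm{id}) \circ \rho = \alpha \circ P_0$. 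Applying the contractive projection $P_0 \otimes \mathrm{id}$ to the dense subset $\rho(A)(1 \otimes \mathrm{C}(G)) \subseteq A \otimes \mathrm{C}(G)$, and using that this projection commutes with right multiplication by elements of $1 \otimes \mathrm{C}(G)$, produces density of $\alpha(A(0))(1 \otimes \mathrm{C}(G))$ in $A(0) \otimes \mathrm{C}(G)$. Hence $(A(0), \alpha)$ is a $G$-$\mathrm{C}^*$-algebra.

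Second, verify that $(A(1), \phi, \lambda)$ is a $G$-equivariant $\mathrm{C}^*$-correspondence over $(A(0),\alpha)$. The bimodule compatibility $\lambda(a \xi b) = \alpha(a) \lambda(\xi) \alpha(b)$ for $a, b \in A(0)$, $\xi \in A(1)$, and the inner product preservation $\alpha(\xi^* \eta) = \lambda(\xi)^* \lambda(\eta)$ for $\xi, \eta \in A(1)$, are direct consequences of $\rho$ being a $*$-homomorphism, together with the inclusion $A(-1) \cdot A(1) \subseteq A(0)$. Principality of $\gamma$, combined with the fullness of $A(\pm 1)$ and separability of $A(0)$, ensures via \cite{Gysin} that $A(1)$ is finitely generated projective as a Hilbert $A(0)$-module; this is precisely what is needed to apply Theorem~\ref{thm:maintheoremintro}, which yields an action $\omega : \mathcal{O}_{A(1)} \to \mathcal{O}_{A(1)} \otimes \mathrm{C}(G)$.

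Finally, match $\omega$ with $\rho$. Under the isomorphism $\mathcal{O}_{A(1)} \cong A$ of \cite{Gysin}, the canonical covariant pair $(k_{A(1)}, k_{A(0)})$ corresponds to the inclusions $A(1) \hookrightarrow A$ and $A(0) \hookrightarrow A$. The defining relations for $\omega$ in Theorem~\ref{thm:maintheoremintro} then translate to $\omega|_{A(0)} = \alpha = \rho|_{A(0)}$ and $\omega|_{A(1)} = \lambda = \rho|_{A(1)}$. Since $A$ is generated as a $\mathrm{C}^*$-algebra by $A(0) \cup A(1)$ and both $\omega$ and $\rho$ are $*$-homomorphisms, $\omega = \rho$, so $\rho$ is a lift. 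The principal obstacle I anticipate is the careful identification, in this last step, of the abstract universal pair $(k_{A(1)}, k_{A(0)})$ with the concrete inclusions into $A$ under the Gysin isomorphism; while that isomorphism is given, its compatibility with the universal data of the Pimsner construction must be tracked explicitly to close the argument.
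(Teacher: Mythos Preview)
Your proposal is correct and follows essentially the same route as the paper: restrict $\rho$ to $A(0)$ and $A(1)$ using gauge-equivariance, verify the $G$-equivariant correspondence axioms directly from the $*$-homomorphism property of $\rho$, establish the Podle\'s condition via the spectral projections, and conclude by uniqueness in Theorem~\ref{thm:maintheoremintro}. The one item you omit is the Podle\'s condition for $\lambda$ on $A(1)$; the paper handles this exactly as you did for $\alpha$, replacing the conditional expectation $P_0$ by the spectral projection $P_1:A\to A(1)$ and using $(P_1\otimes\mathrm{id})\circ\rho=\lambda\circ P_1$ --- so your anticipated obstacle about the Gysin identification is not where the work lies.
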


As mentioned previously, compact quantum group actions preserve fewer \textup{KMS} states on certain graph $\mathrm{C}^*$-algebras and it is thus natural to investigate what happens when we have such an action of a compact quantum group on Pimsner algebras as in Theorem \ref{thm:maintheoremintro}. In \cite{Neshveyev}, the authors show that for a quasi-free dynamics on the Pimsner algebra $\mathcal{O}_E$ induced by a continuous one-parameter group of unitary isometries of the $\mathrm{C}^*$-correspondence $(E,\phi)$, \textup{KMS} states on $\mathcal{O}_E$ are characterized by traces on the $\mathrm{C}^*$-algebra $A$. It is thus natural to hope that if the one-parameter group of isometries of the $\mathrm{C}^*$-correspondence is, in some natural way, compatible with the $G$-structure on the $\mathrm{C}^*$-correspondence then $G$-equivariance of \textup{KMS} states on the Pimsner algebra may also be characterized by $G$-equivariance of the corresponding tracial states on $A$. The necessary compatibility turns out to be the $G$-equivariance of the generator of the one-parameter group of isometries. Let us now state our next theorem which sums up this paragraph.

\begin{theorem}
\label{thm:kmsequivintro}
Let $G$ be a compact quantum group of Kac type, $(A,\alpha)$ be a unital $G$-$\mathrm{C}^*$-algebra, $(E,\phi,\lambda)$ be a $G$-equivariant $\mathrm{C}^*$-correspondence over the $G$-$\mathrm{C}^*$-algebra $(A,\alpha)$ and $\omega$ be the $G$-action on $\mathcal{O}_E$, as obtained in Theorem \textup{\ref{thm:maintheoremintro}} where the Hilbert $A$-module $E$ is finitely generated and projective. Let $\delta$ be the quasi-free dynamics induced by the module dynamics $U$ satisfying the conditions as laid out in \cite{Neshveyev}. Let $U$ be $G$-equivariant, i.e., for all $t \in \mathbb{R}$, \[(U_t \otimes \mathrm{id}_{\mathrm{C}(G)})\circ \lambda=\lambda \circ U_t.\] Let $\varphi$ be a $(\delta,\beta)$-\textup{KMS} state (where $\beta \in (0,\infty)$) on $\mathcal{O}_E$ and $\tau=\varphi \circ k_A$ be the tracial state on $A$ as mentioned above. Then $\varphi$ is $G$-equivariant if and only if $\tau$ is $G$-equivariant.
\end{theorem}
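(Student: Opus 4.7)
The forward direction is immediate from the intertwining relation $\omega \circ k_{A} = (k_{A} \otimes \mathrm{id}) \circ \alpha$ established in Theorem \ref{thm:maintheoremintro}: if $\varphi$ is $G$-equivariant, applying $(\varphi \otimes \mathrm{id}) \circ \omega$ to $k_{A}(a)$ and using that relation gives $(\tau \otimes \mathrm{id}) \circ \alpha(a) = \tau(a)\, 1_{\mathrm{C}(G)}$ for all $a \in A$.

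The backward direction will be proved by an averaging argument. Assume $\tau$ is $G$-equivariant, and consider the state
\[
\varphi' := (\varphi \otimes h_{G}) \circ \omega : \mathcal{O}_{E} \to \mathbb{C},
\]
where $h_{G}$ is the Haar state of $G$. By left-invariance of $h_{G}$, $\varphi'$ is automatically $G$-equivariant. The strategy is to show that $\varphi'$ is also a $(\delta, \beta)$-\textup{KMS} state on $\mathcal{O}_{E}$ with $\varphi' \circ k_{A} = \tau$, and then to invoke Neshveyev's bijection between \textup{KMS} states and tracial states on $A$ satisfying the relevant fixed-point equation (recalled from \cite{Neshveyev}) to conclude $\varphi' = \varphi$.

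Several verifications are in order. First, the $G$-equivariance of the module dynamics $U$ translates directly into $\omega \circ \delta_{t} = (\delta_{t} \otimes \mathrm{id}) \circ \omega$: check this on generators $k_{E}(\xi)$ and $k_{A}(a)$ using $\delta_{t}(k_{E}(\xi)) = k_{E}(U_{t}\xi)$ and the defining relations of $\omega$. It follows that if $a \in \mathcal{O}_{E}$ is $\delta$-analytic, then $\omega(a)$ is analytic for $\delta \otimes \mathrm{id}$, with the analytic continuation $(\delta_{z} \otimes \mathrm{id})(\omega(a)) = \omega(\delta_{z}(a))$. Second, I must verify that $\varphi \otimes h_{G}$ is a $(\delta \otimes \mathrm{id}, \beta)$-\textup{KMS} state on $\mathcal{O}_{E} \otimes \mathrm{C}(G)$. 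This is where the Kac-type assumption enters crucially: since $h_{G}$ is tracial exactly under the Kac-type hypothesis, the trace-like factor $h_{G}(cd) = h_{G}(dc)$ combines with the \textup{KMS} condition for $\varphi$ to yield the \textup{KMS} condition for $\varphi \otimes h_{G}$ on elementary tensors, whence on all of $\mathcal{O}_{E} \otimes \mathrm{C}(G)$ by density and continuity. Composing with the $*$-homomorphism $\omega$ (and using the intertwining $\omega \circ \delta = (\delta \otimes \mathrm{id}) \circ \omega$ to transfer analyticity and the \textup{KMS} identity) then gives that $\varphi'$ is $(\delta, \beta)$-\textup{KMS}.

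Finally, for the restriction: $\varphi' \circ k_{A}(a) = (\varphi \otimes h_{G})((k_{A} \otimes \mathrm{id})\alpha(a)) = (\tau \otimes h_{G})(\alpha(a))$, and $G$-equivariance of $\tau$ reduces this to $\tau(a) h_{G}(1) = \tau(a)$. Since both $\varphi$ and $\varphi'$ are $(\delta,\beta)$-\textup{KMS} states on $\mathcal{O}_{E}$ restricting to the same tracial state $\tau$ on $A$, the uniqueness part of Neshveyev's correspondence forces $\varphi' = \varphi$, so $\varphi$ is $G$-equivariant. The main obstacle will be the careful verification that $\varphi \otimes h_{G}$ satisfies the \textup{KMS} condition (where Kac type is indispensable) and the precise invocation of Neshveyev's uniqueness in the finitely generated projective setting; everything else reduces to chasing the defining intertwining relations of $\omega$, $\alpha$, and $\lambda$.
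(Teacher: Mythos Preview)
Your argument is correct and takes a genuinely different route from the paper's. Both proofs reduce to establishing the identity $(\varphi\otimes h)\circ\omega=\varphi$ and then conclude $G$-equivariance via the Haar-invariance/convolution argument. The paper obtains this identity by direct computation on the spanning set $k_E(\xi_1)\cdots k_E(\xi_m)k_E(\eta_n)^*\cdots k_E(\eta_1)^*$, plugging in the explicit Laca--Neshveyev formula for $\varphi$ in terms of $\tau$ and using traciality of $h$ to rearrange the group factor. You instead argue abstractly: the averaged state $\varphi'=(\varphi\otimes h)\circ\omega$ is $(\delta,\beta)$-\textup{KMS} (because $h$ is tracial and $\omega$ intertwines the dynamics) and restricts to $\tau$ on $A$ (because $\tau$ is $G$-equivariant), so Neshveyev's uniqueness forces $\varphi'=\varphi$. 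Your approach is cleaner and would adapt to situations where no explicit formula for the \textup{KMS} state is available; the paper's approach is more hands-on and makes the role of the Kac hypothesis visible at the level of a single commutation inside $h$. Both rely on Kac type at exactly one point, and both ultimately need the uniqueness clause of the Laca--Neshveyev correspondence (the paper implicitly, through the formula \eqref{eq:formula}; you explicitly).
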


We next apply the above general results to the concrete situation where the $\mathrm{C}^*$-correspondence arises from a finite, directed graph. In that case, the graph $\mathrm{C}^*$-algebra coincides with the Pimsner algebra, allowing us to apply the above results. In particular, we recover the results obtained in \cites{Web,Joardar1,Joardar2}. This also enables us to gain a more concrete understanding of most of the results concerning the interaction between quantum symmetries of graphs and its graph $\mathrm{C}^*$-algebras. For quite a long time, there have been at least two notions of a quantum automorphism group of a finite, simple, directed graph, namely, one due to Banica (\cite{Ban}) and one due to Bichon (\cite{Bichon}). The relationship between these two notions, however, is not so conspicuous; in particular, it is in general difficult to identify the cases when these two notions coincide. In this direction, we have at our disposal the following theorem to offer.

\begin{theorem}
\label{thm:BanBicsameintro}
Let $\clg$ be a finite, simple graph without any source. If either $r$ or $s$ is injective, then $\mathrm{Aut}^{+}_{\textup{Bic}}(\clg)$ is isomorphic to $\mathrm{Aut}^{+}_{\textup{Ban}}(\clg)$.
\end{theorem}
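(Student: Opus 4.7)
The plan is to read both sides of the claim through the Pimsner framework developed in the paper and then invoke Theorem \ref{thm:maintheoremintro} twice. The starting point is the standard identification $\mathrm{C}^{*}(\clg) \cong \clo_{E}$, where $A = \mathrm{C}(V)$, $E = \mathrm{C}(\cle)$, and the left/right $A$-actions on $E$ are given by composition with $r$ and $s$ respectively. Under this dictionary, a Bichon coaction on $\clg$ in the sense of \cite{Bichon} is exactly a $G$-equivariant $\mathrm{C}^{*}$-correspondence structure $(E,\phi,\lambda)$ over $(A,\alpha)$, whereas a Banica coaction in the sense of \cite{Ban} is a coaction $\alpha$ on $\mathrm{C}(V)$ commuting with the adjacency operator of $\clg$.

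One direction is immediate from Theorem \ref{thm:maintheoremintro}: every $G$-equivariant correspondence lifts to a $G$-action on $\clo_{E} = \mathrm{C}^{*}(\clg)$ that restricts to $\alpha$ on $\mathrm{C}(V)$ and preserves the adjacency structure because it preserves $E$ inside $\mathrm{C}^{*}(\clg)$. Universality then yields a surjection $\mathrm{C}(\mathrm{Aut}^{+}_{\textup{Ban}}(\clg)) \to \mathrm{C}(\mathrm{Aut}^{+}_{\textup{Bic}}(\clg))$, exhibiting $\mathrm{Aut}^{+}_{\textup{Bic}}(\clg)$ as a quantum subgroup of $\mathrm{Aut}^{+}_{\textup{Ban}}(\clg)$.

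For the reverse direction, suppose without loss of generality that $r$ is injective. Together with the no-sources hypothesis, a pigeonhole count on the finite graph forces $r \colon \cle \to V$ to be a bijection, so $r^{*} \colon \mathrm{C}(V) \to \mathrm{C}(\cle)$ is a $*$-isomorphism and in particular identifies $E$ with $A$ as a left $A$-module. Given a Banica coaction $\alpha$ on $\mathrm{C}(V)$, set
\[
\lambda := (r^{*} \ot \mathrm{id}) \circ \alpha \circ (r^{*})^{-1} \colon E \raro E \ot \mathrm{C}(G).
\]
Left $A$-compatibility of $\lambda$ over $\alpha$ is built into the definition. Via $r^{*}$, both the right $A$-action and the $A$-valued inner product on $E$ are controlled by the auxiliary map $\mu := s \circ r^{-1} \colon V \to V$, and a direct unpacking shows that right-$A$-compatibility of $\lambda$ and preservation of the inner product both reduce to the single identity $\alpha \circ \mu^{*} = (\mu^{*} \ot \mathrm{id}) \circ \alpha$. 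Since $\alpha$ is a quantum permutation, this is equivalent to $\alpha$ commuting with the adjacency operator of $\clg$, which is exactly the Banica hypothesis. Hence $(E,\phi,\lambda)$ is a $G$-equivariant correspondence, and Theorem \ref{thm:maintheoremintro} produces a $G$-action on $\mathrm{C}^{*}(\clg)$ extending $\alpha$. The case where $s$ is injective is handled symmetrically using the bijection $s$ in place of $r$.

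Universality now delivers a morphism $\mathrm{Aut}^{+}_{\textup{Ban}}(\clg) \to \mathrm{Aut}^{+}_{\textup{Bic}}(\clg)$ inverse to the one from the easy direction, yielding the isomorphism. The main technical obstacle I expect is the inner-product step: the $A$-valued inner product on $E$ has the form $\lgl f, g \rgl(v) = \sum_{s(e) = v} \ol{f(e)} g(e)$ and hence sums over fibres of the possibly non-invertible map $s$, so one has to verify carefully that the adjacency-matrix condition on $\alpha$ is strong enough to control this sum through the quantum permutation structure rather than merely controlling a vertex automorphism. This is also precisely the feature that circumvents the more restrictive hypotheses of Theorem \ref{thm:sufficiencyintro}: injectivity of $r$ or $s$ identifies $E$ directly with $A$ on one side, bypassing the need for simultaneous fullness of both spectral subspaces.
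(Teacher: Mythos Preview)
Your core idea---use the bijection $r$ (or $s$) to transport $\alpha$ to a $*$-homomorphism $\lambda$ on $\mathrm{C}(\mathcal{G}^1)$ and then check the correspondence axioms---is exactly what the paper does in Lemma~\ref{lem:Ban_Bic} and Corollary~\ref{cor:BanBicsame}. However, the logical scaffolding around it is off in two places.

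First, your dictionary is wrong. A Bichon-type coaction is \emph{not} merely a $G$-equivariant $\mathrm{C}^*$-correspondence: the paper's Definition~\ref{def:Bichcat} requires in addition that $\lambda$ be a $*$-homomorphism on $\mathrm{C}(\mathcal{G}^1)$ (equivalently, that $(\mathrm{C}(\mathcal{G}^1),\lambda)$ be a $G$-$\mathrm{C}^*$-algebra). Conversely, the Banica category (Definition~\ref{def:Bancat}) is the one built from equivariant correspondences, with the extra intertwining condition $(r_*\otimes\mathrm{id})\circ\alpha=\lambda\circ r_*$. So your ``easy direction'' does not follow from Theorem~\ref{thm:maintheoremintro}; it follows because Bichon's relations contain Banica's (Remark~\ref{rem:bichon}), or equivalently from the forward half of Lemma~\ref{lem:Ban_Bic}. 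Theorem~\ref{thm:maintheoremintro} produces an action on $\mathcal{O}_E$, but that by itself says nothing about $UD=DU$.

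Second, and for the same reason, Theorem~\ref{thm:maintheoremintro} is irrelevant in the reverse direction too. Your transported $\lambda=(r^*\otimes\mathrm{id})\circ\alpha\circ(r^*)^{-1}$ is automatically a $*$-homomorphism because $r^*$ is a $*$-isomorphism; what you must then invoke is the universal property of $\mathrm{Aut}^+_{\textup{Bic}}(\mathcal{G})$ (Theorem~\ref{thm:Biccorrespondence}), not the lifting theorem. Also note two minor points: you have the left and right $A$-actions swapped relative to the paper (in Example~\ref{exa:graph} the left action is via $s$ and the right via $r$), and the reduction to $\alpha\circ\mu^*=(\mu^*\otimes\mathrm{id})\circ\alpha$ is equivalent to $UD^t=D^tU$ rather than $UD=DU$---though this does follow from the Banica relation after applying the antipode, so the step goes through.
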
 

Let us now briefly discuss the organization of the paper. In Section \ref{sec:cqg}, we briefly recall the necessary background on compact quantum groups and their actions on $\mathrm{C}^*$-algebras and Hilbert $\mathrm{C}^*$-modules. We begin Section \ref{sec:correspondence} with a careful summary of Pimsner's constructions and prove Theorem \ref{thm:maintheoremintro} (Theorem \ref{thm:maintheorem}). It is in this section that we prove Theorem \ref{thm:nonlinearactionintro} (Theorem \ref{thm:nonlinearaction}) and Theorem \ref{thm:sufficiencyintro} (Theorem \ref{thm:sufficiency}) as well. Section \ref{sec:kmsstates} is devoted to recalling some background on \textup{KMS} states from \cite{Neshveyev} and to proving Theorem \ref{thm:kmsequivintro} (Theorem \ref{thm:kmsequiv}). In the remaining three sections, we apply the general results to the question of quantum symmetries of graphs. In Section \ref{sec:applicationsone}, we discuss the bridge that connects the general results of the previous sections with the situation at hand. We also reprove some results on the quantum symmetries of graphs. Section \ref{sec:applicationstwo} is devoted to a detailed study and comparison in the case of a simple, directed graph. It is in this section that we prove Theorem \ref{thm:BanBicsameintro} (Corollary \ref{cor:BanBicsame}). The next and the final section, Section \ref{sec:applicationsthree} is devoted to the case of multigraphs.

To end this Introduction, let us mention that when a first draft of the present article was being written, the preprint \cite{Kim} was brought to our notice. The author proves a similar result to that of ours but goes on to another direction, along the lines of \cites{haong,kalis}. In particular, the author considers coactions of not-necessarily-unital Hopf $\mathrm{C}^*$-algebras on Pimsner algebras and identifies the crossed product as the Pimsner algebra of the crossed product $\mathrm{C}^*$-correspondence. To our surprise, our proof of Theorem \ref{thm:maintheoremintro} is very different to that of \cite{Kim}. The author proves Theorem \ref{thm:maintheoremintro} for a general $\mathrm{C}^*$-correspondence but under an invariance condition of the Katsura ideal. Whereas, we do not require such invariance but restrict ourselves to the case when the Hilbert $\mathrm{C}^*$-module is finitely generated and projective. There are no other overlaps with the results in \cite{Kim}.

\subsection*{Acknowledgments} The first author was supported by the NFR project 300837 ``Quantum Symmetry'' and the Charles University PRIMUS grant \textit{Spectral Noncommutative Geometry of Quantum Flag Manifolds} PRIMUS/21/SCI/026. He also thanks Karen Strung, R\'eamonn \'O Buachalla, Bhishan Jacelon and Alessandro Carotenuto for several discussions on Pimsner algebras. The second author was partially supported by INSPIRE faculty award (DST/INSPIRE/04/2016/002469) given by the D.S.T., Government of India. He also thanks Arnab Mandal for fruitful discussions at various stages of this project.

\subsection*{Notations and Conventions} For an object $X$ in some category, $\mathrm{id}_X$ denotes the identity morphism of $X$. The norm-closed linear span of a subset $S$ of a Banach space is denoted by $[S]$. For a unital $\mathrm{C}^*$-algebra $A$, $1_A$ denotes the unit element in $A$. For a right Hilbert $A$-module $E$, $\mathcal{L}(E)$ denotes the $\mathrm{C}^*$-algebra of adjointable operators on $E$, and $\mathcal{K}(E)$ denotes the closed two-sided ideal of compact operators on $E$. The latter is the norm-closed linear span of $\theta_{\xi,\eta}$, $\xi, \eta \in E$ - where $\theta_{\xi,\eta}$ is the rank-one operator on $E$ given by $\theta_{\xi,\eta}(\zeta)=\xi\langle\eta,\zeta\rangle$, $\zeta \in E$.	We denote the algebraic tensor product by $\odot$. Depending on the context, $\otimes$ denotes the minimal tensor product of two $\mathrm{C}^*$-algebras, or external (also called exterior) tensor product of two Hilbert $\mathrm{C}^*$-modules. However, for the internal tensor product of a Hilbert $A$-module $E$ and a Hilbert $B$-module $F$, where $F$ is endowed with a left action of $A$ via the $*$-homomorphism $\phi : A \rightarrow \mathcal{L}(F)$, is denoted by $E \otimes_{\phi} F$. A reference for the general theory of Hilbert $\mathrm{C}^*$-modules is \cite{Lance}.

\medskip
\textit{All Hilbert $\mathrm{C}^*$-modules considered in this paper are over unital $\mathrm{C}^*$-algebras and are assumed to be full, finitely generated and projective.}

\section{Compact quantum groups and their actions}
\label{sec:cqg}
In this section, we recall the basic definitions from the theory of compact quantum groups and their actions on $\mathrm{C}^*$-algebras. Our reference is \cite{nt2013}; see also \cites{Van,Woro,Wang}.

\begin{definition}
A \textit{compact quantum group} $G$ is a pair $(\mathrm{C}(G),\Delta_G)$ consisting of a unital $\mathrm{C}^*$-algebra $\mathrm{C}(G)$ and a unital $*$-homomorphism $\Delta_G : \mathrm{C}(G) \raro \mathrm{C}(G) \ot \mathrm{C}(G)$ satisfying the following conditions.
\begin{itemize}
	\item $(\mathrm{id}_{\mathrm{C}(G)} \ot \Delta_G) \circ \Delta_G=(\Delta_G \ot \mathrm{id}_{\mathrm{C}(G)}) \circ \Delta_G$ (coassociativity);
	\item $[\Delta_G(\mathrm{C}(G))(1_{\mathrm{C}(G)} \ot \mathrm{C}(G))]=[\Delta_G(\mathrm{C}(G))(\mathrm{C}(G) \ot 1_{\mathrm{C}(G)})]=\mathrm{C}(G) \ot \mathrm{C}(G)$ (bisimplifiability).
\end{itemize}
\end{definition}

Given a compact quantum group $G$, there is a canonical dense Hopf-$*$-algebra $\mathbb{C}[G] \subset \mathrm{C}(G)$ on which an \textit{antipode} $\kappa$ and a \textit{counit} $\varepsilon$ are defined. A \textit{morphism} $f : G_1 \rightarrow G_2$ between two compact quantum groups $G_1$ and $G_2$ is given by a $*$-homomorphism \[f : \mathrm{C}(G_{2}) \raro \mathrm{C}(G_{1}) \text{ such that } (f \ot f)\circ\Delta_{G_2}=\Delta_{G_1}\circ f.\]Such a $*$-homomorphism $f : \mathrm{C}(G_{2}) \raro \mathrm{C}(G_{1})$ is also called a \textit{Hopf $*$-homomorphism} and we will use these two terms interchangeably. 

Let $G$ be a compact quantum group. Then there is a unique state $h$, called the \textit{Haar state}, such that \[(h\ot \mathrm{id}_{\mathrm{C}(G)})\Delta_G(a)=(\mathrm{id}_{\mathrm{C}(G)} \ot h)\Delta_G(a)=h(a)1_{\mathrm{C}(G)} \text { for all } a \in \mathrm{C}(G).\] 

In general, the Haar state need not be tracial but when it is, the compact quantum group $G$ is said to be of \textit{Kac} type.

\begin{definition}
\label{def:Galg}
Let $G$ be a compact quantum group. A \textit{$G$-$\mathrm{C}^*$-algebra} is a pair $(A,\alpha)$ consisting of a unital $\mathrm{C}^*$-algebra $A$ and a unital $*$-homomorphism $\alpha : A \raro A \ot \mathrm{C}(G)$ satisfying the following conditions.
\begin{itemize}
	\item $(\alpha \ot \mathrm{id}_{\mathrm{C}(G)})\circ \alpha=(\mathrm{id}_{A} \ot \Delta_G)\circ \alpha$ (coassociativity);
	\item $[\alpha(A)(1_{A} \ot \mathrm{C}(G))]=A \ot \mathrm{C}(G)$ (Podle\'s condition).
\end{itemize}
\end{definition}

Let $G$ be a compact quantum group and $(A,\alpha)$ be a
$G$-$\mathrm{C}^*$-algebra. One refers to $\alpha$ as the $G$-action on
$A$. There is (\cite{de-commer:action}*{pages 49-50}) a norm-dense $*$-subalgebra $\mathcal{S}(A)$ of $A$, called the \textit{spectral subalgebra} (or the \textit{Podle\'s subalgebra} after \cite{podles}*{Theorem 1.5}), such that $\alpha$ restricts to yield a Hopf-$*$-algebraic coaction $\alpha|_{\mathcal{S}(A)} : \mathcal{S}(A) \rightarrow \mathcal{S}(A) \odot \mathbb{C}[G]$. The $G$-action $\alpha$ is said to be \textit{faithful} if the $*$-algebra generated by the set  \[\{(\omega\ot \mathrm{id}_{\mathrm{C}(G)})\alpha(A) \mid \omega \in A^*\}\] is norm-dense in $\mathrm{C}(G)$. Furthermore, a \textit{$G$-equivariant state} on $A$ is a state $\tau$ on $A$ satisfying \[(\tau \ot \mathrm{id}_{\mathrm{C}(G)})\alpha(a)=\tau(a)1_{\mathrm{C}(G)} \text{ for all } a \in A.\]

\begin{definition}
\label{def:qaut}\cite{Wang}*{Definition 2.3}
Let $A$ be a unital $\mathrm{C}^*$-algebra. A \textit{quantum automorphism group} of $A$ is a pair $(G,\alpha^G)$ consisting of a compact quantum group $G$ and a unital $*$-homomorphism $\alpha^G : A \rightarrow A \otimes \mathrm{C}(G)$ satisfying the following conditions.
\begin{itemize}
	\item The pair $(A,\alpha^G)$ is a $G$-$\mathrm{C}^*$-algebra;
	\item the $G$-action $\alpha^G$ on $A$ is faithful;
	\item if $(G',\alpha^{G'})$ is another pair consisting of a compact quantum group $G'$ and a unital $*$-homomorphism $\alpha^{G'} : A \rightarrow A \otimes \mathrm{C}(G')$ such that $(A,\alpha^{G'})$ is a $G'$-$\mathrm{C}^*$-algebra and $\alpha^{G'}$ is faithful then there is a unique morphism $f : G' \rightarrow G$ such that $(\mathrm{id}_A \otimes f)\circ\alpha^G=\alpha^{G'}$.
\end{itemize}
\end{definition}

\begin{remark}
In general, a quantum automorphism group may fail to exist. To ensure existence, one generally assumes that the considered actions preserve some fixed state on the $\mathrm{C}^*$-algebra. We refrain from going into further details, instead refer the interested reader to \cite{Wang}.
\end{remark}

\begin{example}\cite{Wang}*{Theorem 3.1}
Let $X_{n}$ be the space consisting of $n$ points. The quantum automorphism group of the $\mathrm{C}^*$-algebra $\mathrm{C}(X_{n})$ is the \textit{quantum permutation group} together with the permutation action, denoted by $(\textup{S}_{n}^{+},\alpha^{X_n})$. As a $\mathrm{C}^*$-algebra, $\mathrm{C}(\textup{S}_{n}^{+})$ is the universal $\mathrm{C}^*$ algebra generated by $q_{ij}$, $i,j=1,\dots,n$, satisfying the following relations
	\[
	q_{ij}^{2}=q_{ij}, \ q^*_{ij}=q_{ij}, \ \sum_{j=1}^{n}q_{ij}=\sum_{i=1}^{n}q_{ij}=1.
	\]
The comultiplication on the generators is given by $\Delta_{S^+_n}(q_{ij})=\sum_{k=1}^{n}q_{ik}\ot q_{kj}$, $i,j=1,\dots,n$. The action $\alpha^{X_n}$ is given by $\alpha^{X_n}(e_j)=\sum_{i=1}^ne_i \otimes q_{ij}$, $j=1,\dots,n$, where $e_j$ is the indicator function at the point $j$.
\end{example}

We end this section with one more definition and a remark.

\begin{definition}\cite{BS}*{Definition 2.2}
\label{def:equivariantmodule}
Let $G$ be a compact quantum group and $(B,\beta)$ be a $G$-$\mathrm{C}^*$-algebra. A \textit{$G$-equivariant Hilbert $B$-module} is a pair $(E,\lambda)$ consisting of a right Hilbert $B$-module $E$ and a linear map $\lambda : E \rightarrow E \otimes \mathrm{C}(G)$ satisfying the following.
\begin{itemize}
	\item For all $b \in B$ and $\xi,\eta \in E$, $\lambda(\xi b)=\lambda(\xi)\beta(b)$ and $\langle \lambda(\xi),\lambda(\eta) \rangle=\beta(\langle \xi,\eta \rangle)$;
	\item $(\mathrm{id}_E \otimes \Delta_{G})\circ \lambda=(\lambda \otimes \mathrm{id}_{\mathrm{C}(G)})\circ \lambda$ (coassociativity);
	\item $[\lambda(E)(1_B \otimes \mathrm{C}(G))]=E \otimes \mathrm{C}(G)$ (Podle\'s condition).
\end{itemize}
\end{definition}
	
\begin{remark}
Let $G$ be a compact quantum group, $(B,\beta)$ be a $G$-$\mathrm{C}^*$-algebra and $(E,\lambda)$ be a $G$-equivariant Hilbert $B$-module. One refers to $\lambda$ as the $G$-action on $E$. There is (\cite{voigtbc}*{page 1878}) a dense subspace $\mathcal{S}(E)$ of $E$, called the \textit{spectral submodule}, such that $\lambda$ restricts to yield a coaction $\lambda|_{\mathcal{S}(E)} : \mathcal{S}(E) \rightarrow \mathcal{S}(E) \odot \mathbb{C}[G]$. Moreover, $\mathcal{S}(E)$ is naturally a right $\mathcal{S}(B)$-module and the scalar product of $E$ restricts to an $\mathcal{S}(B)$-valued scalar product on $\mathcal{S}(E)$, making $\mathcal{S}(E)$ into a pre-Hilbert $\mathcal{S}(B)$-module. Furthermore, one constructs (\cite{BS}*{Proposition 2.4}) a unitary operator $V_{\lambda} : E \otimes_{\beta} (B \otimes \mathrm{C}(G)) \rightarrow E \otimes \mathrm{C}(G)$ by \[V_{\lambda}(\xi \otimes x)=\lambda(\xi)x,\] for $\xi \in E$ and $x \in B \otimes \mathrm{C}(G)$. This unitary in turn determines an action $\mathrm{ad}_{\lambda} : \mathcal{K}(E) \rightarrow \mathcal{K}(E) \otimes \mathrm{C}(G)$ on $\mathcal{K}(E)$ given by \[\mathrm{ad}_{\lambda}(T)=V_{\lambda}(T \otimes \mathrm{id})V^*_{\lambda},\] for $T \in \mathcal{K}(E)$, making $(\mathcal{K}(E),\mathrm{ad}_{\lambda})$ into a $G$-$\mathrm{C}^*$-algebra. Setting $T=\theta_{\xi,\eta}$ for $\xi,\eta \in E$, one obtains $\mathrm{ad}_{\lambda}(\theta_{\xi,\eta})=\lambda(\xi)\lambda(\eta)^*=\theta_{\lambda(\xi),\lambda(\eta)}$.
\end{remark}
	
\section{\texorpdfstring{$\mathrm{C}^*$-}{}correspondences and Pimsner algebras}
\label{sec:correspondence}
In this section, after gathering some preliminaries on $\mathrm{C}^*$-correspondences, and on Pimsner algebras, we prove that for a compact quantum group $G$, a $G$-equivariant $\mathrm{C}^*$-correspondence naturally gives rise to a $G$-$\mathrm{C}^*$-algebra structure on the Pimsner algebra. We provide necessary and sufficient conditions under which it is guaranteed that a $G$-$\mathrm{C}^*$-algebra structure on the Pimsner algebra arises in this way. For the basic definitions, our reference is \cite{katsura1}; see also \cites{Pimsner,KPW}. 
%justify unitality

\begin{definition}{}
\label{def:cor}
Let $A$ be a unital $\mathrm{C}^*$-algebra. A \textit{$\mathrm{C}^*$-correspondence} over $A$ is a pair $(E,\phi)$, consisting of a right Hilbert $A$-module $E$ and a unital $*$-homomorphism $\phi:A\raro \mathcal{L}(E)$ from the $\mathrm{C}^*$-algebra $A$ to the $\mathrm{C}^*$-algebra $\mathcal{L}(E)$ of adjointable operators on $E$.
\end{definition} 

\begin{remark}
We remark that it is not customary to assume $\phi$ to be unital. But it is easy to see that we lose no generality assuming this. For the justification, we urge the reader to see Remark \ref{rem:phiunital} below.
\end{remark}

\begin{definition}
\label{def:rep}
Let $A$ be a unital $\mathrm{C}^*$-algebra and $(E,\phi)$ be a $\mathrm{C}^*$-correspondence over $A$. A {\it representation} of the $\mathrm{C}^*$-correspondence $(E,\phi)$ on a unital $\mathrm{C}^*$-algebra $B$ is a pair $(t,\pi)$, consisting of a linear map $t : E \raro B$ and a unital $*$-homomorphism $\pi : A \raro B$ such that
\begin{itemize}
	\item $t(\xi)^*t(\eta)=\pi(\langle\xi,\eta\rangle)$ for all $\xi,\eta \in E$;
	\item $t(\phi(a)\xi)=\pi(a)t(\xi)$ for all $a \in A$ and all $\xi \in E$.
\end{itemize}
\end{definition}
It is a fact that for such a representation $(\pi,t)$ of the $\mathrm{C}^*$-correspondence $(E,\phi)$ on the $\mathrm{C}^*$-algebra $B$, $t(\xi a)=t(\xi)\pi(a)$ for all $\xi \in E$, and $a\in A$. Furthermore, there is a well-defined unital $*$-homomorphism $\psi_{t} : \clk(E) \raro B$ such that $\psi_{t}(\theta_{\xi,\eta})=t(\xi)t(\eta)^*$, for $\xi, \eta \in E$. Moreover, we have $\pi(a)\psi_t(k)=\psi_t(\phi(a)k)$ and $\psi_{t}(k)t(\xi)=t(k\xi)$ for all $a \in A$, $k \in \clk(E)$, and $\xi \in E$. 

\begin{definition}{}
\label{def:covariant}
Let $A$ be a unital $\mathrm{C}^*$-algebra and $(E,\phi)$ be a $\mathrm{C}^*$-correspondence over $A$. A representation $(t,\pi)$ of $(E,\phi)$ on a unital $\mathrm{C}^*$-algebra $B$ is said to be {\it covariant} if $\psi_{t}(\phi(a))=\pi(a)$ for all $a \in (\ker \phi)^{\perp}$, where 
\[
	(\ker \phi)^{\perp}=\{a \in A \mid ab=0 \text{ for all } b \in \ker(\phi)\}.
\]
\end{definition}

\begin{remark}
We remark that by our standing assumption, $E$ is finitely generated and therefore $\mathcal{K}(E)=\mathcal{L}(E)$, which also implies that $\phi^{-1}(\mathcal{K}(E))$ coincides with the whole of $A$. Therefore, the so-called Katsura ideal $J_E$, which is defined to be 
\[J_E=\phi^{-1}(\mathcal{K}(E)) \cap (\ker \phi)^{\perp},\] is nothing but $(\ker \phi)^{\perp}$. Moreover, it is proved in \cite{KPW} that one can restrict to covariant representations $(t,\pi)$ on unital $\mathrm{C}^*$-algebras $B$ with $\pi$ unital, to define the Pimsner algebra $\mathcal{O}_E$, assuming $\phi$ to be isometric, which is taken care of by the appearance of $(\ker \phi)^{\perp}$ in the definition of covariance above. 
\end{remark}

\begin{definition}
\label{def:pimsmeralgebra}
Let $A$ be a unital $\mathrm{C}^*$-algebra and $(E,\phi)$ be a $\mathrm{C}^*$-correspondence over $A$. The {\it Pimsner algebra} associated to the $\mathrm{C}^*$-correspondence $(E,\phi)$ is the unital $\mathrm{C}^*$-algebra $\mathcal{O}_E$ satisfying the following.
\begin{itemize}
	\item There is a covariant representation $(k_E,k_A)$ of the $\mathrm{C}^*$-correspondence $(E,\phi)$ on the $\mathrm{C}^*$-algebra $\clo_{E}$, called the universal covariant representation;
	\item for any covariant representation $(t,\pi)$ of the $\mathrm{C}^*$-correspondence $(E,\phi)$ on a unital $\mathrm{C}^*$-algebra $B$, there is a unique unital $*$-homomorphism $t \times \pi : \clo_{E} \raro B$, called the integrated form of $(t,\pi)$, such that,
	\[(t \times \pi)\circ k_E=t, \ (t \times \pi)\circ k_A=\pi.\]
\end{itemize}
\end{definition} 

\begin{remark}
\label{rem:phiunital}
As mentioned above, we lose no generality by assuming $\phi$ to be unital in our definition of a $\mathrm{C}^*$-correspondence. Indeed, letting the projection $\phi(1)$ to be $p$, for a representation $(t,\pi)$, we have \[0=t(\phi(1)(1-p)(\xi))=\pi(1)t((1-p)\xi)=t((1-p)\xi),\]implying that $t$ vanishes identically on the submodule $(1-p)E$. In particular, for the universal representation $(k_E,k_A)$, $k_E$ vanishes identically on $(1-p)E$ and so we may restrict ourselves to the submodule $pE$. But $\phi(1)$ is the identity operator on this submodule. Therefore, we can and do assume $\phi$ to be unital.
\end{remark}

By the universality, for each $z\in \mathbb{T}$, there is a unital $*$-automorphism $\gamma_{z} : \clo_{E} \raro \clo_{E}$ such that $\gamma_{z}(k_A(a))=k_A(a)$ for all $a \in A$ and $\gamma_{z}(k_E(\xi))=zk_E(\xi)$ for all $\xi \in E$. One observes that $z \mapsto \gamma_z$ is strongly continuous and the resulting action of $\mathbb{T}$, $\gamma : \mathcal{O}_E \rightarrow \mathcal{O}_E \otimes \mathrm{C}(\mathbb{T})$ is called the \textit{gauge action}.

\begin{definition} \cite{BS}*{Definition 2.9}
\label{def:equivcorr} 
Let $G$ be a compact quantum group and $(A,\alpha)$ be a $G$-$\mathrm{C}^*$-algebra. A \textit{$G$-equivariant $\mathrm{C}^*$-correspondence} over the $G$-$\mathrm{C}^*$-algebra $(A,\alpha)$ is a triple $(E,\phi,\lambda)$, consisting of a right Hilbert $A$-module $E$, a unital $*$-homomorphism $\phi : A \rightarrow \mathcal{L}(E)$ and a linear map $\lambda : E \rightarrow E \otimes \mathrm{C}(G)$ satisfying the following.
\begin{itemize}
	\item The pair $(E,\phi)$ is a $\mathrm{C}^*$-correspondence over $A$;
	\item the pair $(E,\lambda)$ is a $G$-equivariant Hilbert $A$-module over the $G$-$\mathrm{C}^*$-algebra $(A,\alpha)$ (Definition \ref{def:equivariantmodule});
	\item The $*$-homomorphism $\phi : A \rightarrow \mathcal{L}(E)$ is $G$-equivariant, i.e., \[(\phi \otimes \mathrm{id}_{\mathrm{C}(G)})\alpha(a)=V_{\lambda}(\phi(a) \otimes \mathrm{id})V^*_{\lambda}, \text{ for } a \in A.\]
\end{itemize}
\end{definition} 
	
By \cite{BS}*{page 693}, the last condition is equivalent to
\begin{itemize}
	\item $\lambda(\phi(a)\xi)=(\phi \otimes \mathrm{id}_{\mathrm{C}(G)})(\alpha(a))\lambda(\xi)$ for $\xi \in E$ and $a \in A$. 
\end{itemize}
	
\begin{theorem}
\label{thm:maintheorem}
Let $G$ be a compact quantum group, $(A,\alpha)$ be a $G$-$\mathrm{C}^*$-algebra and  $(E,\phi,\lambda)$ be a $G$-equivariant $\mathrm{C}^*$-correspondence over the $G$-$\mathrm{C}^*$-algebra $(A,\alpha)$. Then there is a unique unital $*$-homomorphism \[\omega : \mathcal{O}_E \rightarrow \mathcal{O}_E \otimes \mathrm{C}(G)\] such that \[\omega \circ k_E=(k_E \otimes \mathrm{id}_{\mathrm{C}(G)}) \circ \lambda, \text{ and, } \omega \circ k_A=(k_A \otimes \mathrm{id}_{\mathrm{C}(G)})\circ \alpha.\] Furthermore, the pair $(\mathcal{O}_E,\omega)$ satisfies the conditions in Definition \textup{\ref{def:Galg}}, making $(\mathcal{O}_E,\omega)$ into a $G$-$\mathrm{C}^*$-algebra.
\end{theorem}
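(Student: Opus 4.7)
The plan is to invoke the universal property of the Pimsner algebra (Definition \ref{def:pimsmeralgebra}), reducing the construction of $\omega$ to exhibiting a covariant representation of $(E,\phi)$ inside $\mathcal{O}_E \otimes \mathrm{C}(G)$. The natural candidates are
\[
\pi := (k_A \otimes \mathrm{id}_{\mathrm{C}(G)}) \circ \alpha, \qquad t := (k_E \otimes \mathrm{id}_{\mathrm{C}(G)}) \circ \lambda.
\]
That $\pi$ is a unital $*$-homomorphism is immediate. The representation identities $t(\xi)^* t(\eta) = \pi(\langle \xi, \eta \rangle)$ and $t(\phi(a)\xi) = \pi(a)\, t(\xi)$ are then routine: the first uses $k_E(u)^* k_E(v) = k_A(\langle u, v\rangle)$ together with the inner-product compatibility of $\lambda$ from Definition \ref{def:equivariantmodule}, while the second uses the $G$-equivariance $\lambda(\phi(a)\xi) = (\phi \otimes \mathrm{id}_{\mathrm{C}(G)})(\alpha(a))\,\lambda(\xi)$ recorded after Definition \ref{def:equivcorr}.

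The central technical step is covariance. Since $E$ is finitely generated and projective with $\phi$ unital, $\mathcal{K}(E) = \mathcal{L}(E)$ and $(\ker\phi)^\perp = A$, so I must show $\psi_t(\phi(a)) = \pi(a)$ for every $a \in A$. Fixing a finite frame $\{e_i\}_{i=1}^n$ for $E$ with $\sum_i \theta_{e_i, e_i} = 1_{\mathcal{L}(E)}$, the decomposition $\phi(a) = \sum_i \theta_{\phi(a) e_i, e_i}$ combined with $t(\phi(a) e_i) = \pi(a)\, t(e_i)$ reduces the claim to
\[
\sum_{i=1}^n t(e_i)\, t(e_i)^* = 1_{\mathcal{O}_E} \otimes 1_{\mathrm{C}(G)}.
\]
Expanding $t(e_i) t(e_i)^*$ in terms of $\lambda(e_i)$ and applying the formula $\mathrm{ad}_\lambda(\theta_{e_i, e_i}) = \theta_{\lambda(e_i), \lambda(e_i)}$ from the last remark of Section \ref{sec:cqg} rewrites the sum as $(\psi_{k_E} \otimes \mathrm{id}_{\mathrm{C}(G)})\bigl(\mathrm{ad}_\lambda(1_{\mathcal{L}(E)})\bigr)$; the desired equality then falls out because $\mathrm{ad}_\lambda$ is a unital $*$-homomorphism and $\psi_{k_E}(1_{\mathcal{L}(E)}) = \psi_{k_E}(\phi(1_A)) = k_A(1_A) = 1_{\mathcal{O}_E}$ by covariance of the universal pair. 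The universal property of $\mathcal{O}_E$ now produces a unique unital $*$-homomorphism $\omega := t \times \pi$ with the stated identities.

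For the final assertion that $(\mathcal{O}_E, \omega)$ is a $G$-$\mathrm{C}^*$-algebra, I verify Definition \ref{def:Galg}. Coassociativity $(\omega \otimes \mathrm{id}_{\mathrm{C}(G)})\omega = (\mathrm{id}_{\mathcal{O}_E} \otimes \Delta_G)\omega$ is checked on generators $k_A(a)$ and $k_E(\xi)$, where it reduces directly to the coassociativity of $\alpha$ and $\lambda$, and then extends by uniqueness in the universal property. For the Podle\'s condition I pass to the spectral level: let $\mathcal{O} \subset \mathcal{O}_E$ be the (norm-dense) $*$-subalgebra generated by $k_A(\mathcal{S}(A)) \cup k_E(\mathcal{S}(E))$. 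Since $\alpha$ and $\lambda$ restrict to algebraic $\mathbb{C}[G]$-coactions on $\mathcal{S}(A)$ and $\mathcal{S}(E)$, the map $\omega$ restricts to a Hopf-$*$-algebraic coaction $\omega|_{\mathcal{O}} : \mathcal{O} \to \mathcal{O} \odot \mathbb{C}[G]$, and the standard antipode argument gives $\omega(\mathcal{O})(1_{\mathcal{O}_E} \odot \mathbb{C}[G]) = \mathcal{O} \odot \mathbb{C}[G]$; taking norm-closures yields the Podle\'s condition. The main obstacle is the covariance check, specifically the frame computation showing $\psi_t(1_{\mathcal{L}(E)}) = 1_{\mathcal{O}_E \otimes \mathrm{C}(G)}$; once it is dispatched, coassociativity and Podle\'s are essentially routine.
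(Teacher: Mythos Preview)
Your argument follows the same skeleton as the paper's---define $(t,\pi)$, verify it is a covariant representation, invoke universality, then check coassociativity and Podle\'s---but the two technical steps are handled differently.

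For covariance, the paper does not choose a frame. Instead it starts from the identity $\psi_t(\phi(a))\,t(\xi)=\pi(a)\,t(\xi)$, uses the Podle\'s condition $[\lambda(E)(1\otimes\mathrm{C}(G))]=E\otimes\mathrm{C}(G)$ to replace $t(\xi)$ by $k_E(\xi)\otimes 1$, then right-multiplies by $k_E(\eta)^*\otimes 1$ to obtain $\psi_t(\phi(a))(\psi_{k_E}(k)\otimes 1)=\pi(a)(\psi_{k_E}(k)\otimes 1)$ for all $k\in\mathcal{K}(E)$, and finally sets $k=\mathrm{id}_E$. Your frame-and-$\mathrm{ad}_\lambda$ computation is a clean alternative that reaches the same endpoint $\psi_t(1_{\mathcal{L}(E)})=\psi_{k_E}(1_{\mathcal{L}(E)})\otimes 1$ more directly; what it buys is that the reduction to $\psi_{k_E}(1)=1$ is explicit rather than hidden in a cancellation. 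For the Podle\'s condition, the paper avoids spectral subalgebras entirely: it shows the set $S=\{x\in\mathcal{O}_E:x\otimes 1\in[\omega(\mathcal{O}_E)(1\otimes\mathrm{C}(G))]\}$ is multiplicatively closed and contains $k_E(E)\cup k_A(A)$, hence equals $\mathcal{O}_E$. Your antipode argument on the dense $*$-subalgebra generated by $k_A(\mathcal{S}(A))\cup k_E(\mathcal{S}(E))$ is equally standard and perhaps more conceptual, while the paper's route is more elementary.

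One slip to flag: your claim that $(\ker\phi)^\perp=A$ holds only when $\phi$ is injective, which the paper does not assume. Consequently your justification ``$\psi_{k_E}(\phi(1_A))=k_A(1_A)$ by covariance of the universal pair'' is not valid in general, since covariance applies only on $(\ker\phi)^\perp$ and $1_A$ need not lie there. The paper sidesteps this by simply asserting (just after Definition~\ref{def:rep} and again in the proof) that $\psi_t$ is a \emph{unital} $*$-homomorphism; you invoke the same fact but give a faulty reason for it. The rest of your argument is unaffected.
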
 
	
\begin{proof}
We define $t : E \raro \clo_{E} \ot \mathrm{C}(G)$ and $\pi : A \raro \clo_{E}\ot \mathrm{C}(G)$ by the following:
\[t=(k_E \otimes \mathrm{id}_{\mathrm{C}(G)}) \circ \lambda, \text{ and, } \pi=(k_A \otimes \mathrm{id}_{\mathrm{C}(G)})\circ \alpha.\]
We claim that $(t,\pi)$ is a covariant representation of the $\mathrm{C}^*$-correspondence $(E,\phi)$ over $A$. Now for $\xi,\eta \in E$, we have
\[
\begin{aligned}
t(\xi)^*t(\eta)={}&((k_E \otimes \mathrm{id}_{\mathrm{C}(G)}) \lambda(\xi))^*((k_E \otimes \mathrm{id}_{\mathrm{C}(G)}) \lambda(\eta))\\
={}&(k_A \otimes \mathrm{id}_{\mathrm{C}(G)})(\langle \lambda(\xi),\lambda(\eta) \rangle)\\
={}&(k_A \otimes \mathrm{id}_{\mathrm{C}(G)})(\alpha(\langle \xi,\eta \rangle))\\
={}&\pi(\langle \xi,\eta \rangle),
\end{aligned}
\]where the second equality is by Definition \ref{def:rep}; and the third equality is by Definition \ref{def:equivariantmodule}. Again, for $a \in A$ and $\xi \in E$, we have
\[
\begin{aligned}
	t(\phi(a)\xi)={}&(k_E \otimes \mathrm{id}_{\mathrm{C}(G)}) \lambda(\phi(a).\xi)\\
	={}&(k_E \otimes \mathrm{id}_{\mathrm{C}(G)})(\phi \otimes \mathrm{id}_{\mathrm{C}(G)})(\alpha(a))\lambda(\xi)\\
	={}&(k_A \otimes \mathrm{id}_{\mathrm{C}(G)})(\alpha(a))(k_E \otimes \mathrm{id}_{\mathrm{C}(G)})\lambda(\xi)\\
	={}&\pi(a)t(\xi),
\end{aligned}
\]where the second equality is by Definition \ref{def:equivariantmodule}; and the third equality is by Definition \ref{def:pimsmeralgebra}. Thus $(t,\pi)$ is a representation of the $\mathrm{C}^*$-correspondence $(E,\phi)$ over $A$. We recall that there is a unital $*$-homomorphism $\psi_t : \clk(E) \raro \clo_{E} \otimes \mathrm{C}(G)$ given by $\psi_t(\theta_{\xi,\eta})=t(\xi)t(\eta)^{\ast}$ such that $\psi_t(k)t(\xi)=t(k\xi)$ for all $\xi \in E$ and $k \in \mathcal{K}(E)$; in particular, we have $\psi_t(\phi(a))t(\xi)=t(\phi(a)\xi)=\pi(a)t(\xi)$ for all $\xi \in E$ and all $a \in A$. Since $[\lambda(E)(1_A \otimes \mathrm{C}(G))]=E \otimes \mathrm{C}(G)$, using the linearity and the continuity of the maps involved, we get for all $\xi \in E$ and $a \in A$,
\[
	\psi_t(\phi(a))(k_E(\xi) \otimes 1_{\mathrm{C}(G)})=\pi(a)(k_E(\xi)\otimes 1_{\mathrm{C}(G)}).
\]Consequently for any $\xi,\eta \in E$, and $a \in A$, we have
\[
	\psi_t(\phi(a))(k_E(\xi)k_E(\eta)^*\otimes 1_{\mathrm{C}(G)})=\pi(a)(k_E(\xi)k_E(\eta)^*\otimes 1_{\mathrm{C}(G)}),
\]
i.e., \[\psi_t(\phi(a))(\psi_{k_E}(\theta_{\xi,\eta})\otimes 1_{\mathrm{C}(G)})=\pi(a)(\psi_{k_E}(\theta_{\xi,\eta})\otimes 1_{\mathrm{C}(G)}).\] So again by linearity and continuity of the maps involved, we get
\[
	\psi_t(\phi(a))(\psi_{k_E}(k)\otimes 1_{\mathrm{C}(G)})=\pi(a)(\psi_{k_E}(k)\otimes 1_{\mathrm{C}(G)}),
\] for all $k \in \mathcal{K}(E)$. As the $\mathrm{C}^*$-algebra $A$ is unital and $E$ is finitely generated, $\clk(E)=\cll(E)$ and since $\psi_{t}$ is a unital $*$-homomorphism, we obtain, by plugging $k=\mathrm{id}_E \in \cll(E)$ in the identity above,
\[
	\psi_t(\phi(a))=\pi(a),
\] for all $a \in A$, which in particular, shows that $(t,\pi)$ is a covariant representation of the $\mathrm{C}^*$-correspondence $(E,\phi)$ over $A$ on the unital $\mathrm{C}^*$-algebra $\clo_{E} \otimes \mathrm{C}(G)$. Therefore, by the universality of $\clo_{E}$, we get a necessarily unique $*$-homomorphism \[t \times \pi=\omega : \clo_{E} \raro \clo_{E} \ot \mathrm{C}(G)\] such that \[\omega \circ k_E=(k_E \otimes \mathrm{id}_{\mathrm{C}(G)}) \circ \lambda, \text{ and, } \omega \circ k_A=(k_A \otimes \mathrm{id}_{\mathrm{C}(G)})\circ \alpha,\] completing the first part of the theorem. To see that $(\mathcal{O}_E,\omega)$ is indeed a $G$-$\mathrm{C}^*$-algebra, we start with the coassociativity of $\omega$, i.e., $(\omega \otimes \mathrm{id}_{\mathrm{C}(G)})\circ\omega=(\mathrm{id}_{\mathcal{O}_E} \otimes \Delta_G)\circ\omega$. Again, by universality (and uniqueness), it suffices to show that the two $*$-homomorphisms $(\omega \otimes \mathrm{id}_{\mathrm{C}(G)})\circ\omega$ and $(\mathrm{id}_{\mathcal{O}_E} \otimes \Delta_G)\circ\omega$ agree on the images $k_E(E)$ and $k_A(A)$, i.e., 
\[
(\omega \otimes \mathrm{id}_{\mathrm{C}(G)})\circ \omega \circ k_E=(\mathrm{id}_{\mathcal{O}_E} \otimes \Delta_G)\circ \omega \circ k_E,
\]and
\[
(\omega \otimes \mathrm{id}_{\mathrm{C}(G)})\circ \omega \circ k_A=(\mathrm{id}_{\mathcal{O}_E} \otimes \Delta_G)\circ \omega \circ k_A.
\]However, using \[\omega \circ k_E=(k_E \otimes \mathrm{id}_{\mathrm{C}(G)}) \circ \lambda, \text{ and, } \omega \circ k_A=(k_A \otimes \mathrm{id}_{\mathrm{C}(G)})\circ \alpha,\]respectively, we see that we are reduced to the coassociativity of $\lambda$ and $\alpha$, respectively. Thus $\omega$ is indeed coassociative. For the Podle\'s condition, we consider the set 
\[
S=\{x \in \mathcal{O}_E \mid x \otimes 1_{\mathrm{C}(G)} \in [\omega(\mathcal{O}_E)(1_{\mathcal{O}_E} \otimes \mathrm{C}(G))]\}.	
\]Since $[\lambda(E)(1_A \otimes \mathrm{C}(G))]=E \otimes \mathrm{C}(G)$ and $[\alpha(A)(1_A \otimes \mathrm{C}(G))]=A \otimes \mathrm{C}(G)$, we see that for each $\xi \in E$, $k_E(\xi) \in S$ and for each $a \in A$, $k_A(a) \in S$. Let $x$ and $y$ be in $S$. Then we see that
\[
\begin{aligned}
	xy \otimes 1_{\mathrm{C}(G)}=(x \otimes 1_{\mathrm{C}(G)})(y \otimes 1_{\mathrm{C}(G)}) \in{}& [\omega(\mathcal{O}_E)(1_{\mathcal{O}_E} \otimes \mathrm{C}(G))(y \otimes 1_{\mathrm{C}(G)})]\\
	={}&[\omega(\mathcal{O}_E)(y \otimes 1_{\mathrm{C}(G)})(1_{\mathcal{O}_E} \otimes \mathrm{C}(G))]\\
	\subseteq{}&[\omega(\mathcal{O}_E)\omega(\mathcal{O}_E)(1_{\mathcal{O}_E} \otimes \mathrm{C}(G))(1_{\mathcal{O}_E} \otimes \mathrm{C}(G))]\\
	={}&[\omega(\mathcal{O}_E)(1_{\mathcal{O}_E} \otimes \mathrm{C}(G))],
\end{aligned}	
\]i.e., $S$ is closed under multiplication. But $S$ contains, as shown above, $k_E(E)$ and $k_A(A)$; thus again by universality, $S$ equals $\mathcal{O}_E.$ Therefore $(\mathcal{O}_E,\omega)$ is indeed a $G$-$\mathrm{C}^*$-algebra and this completes the proof.
\end{proof}
	
\begin{remarks}\hfill
\begin{itemize}
	\item We do not need the fact that the Hilbert $\mathrm{C}^*$-module $E$ be full in the above proof. 
	\item When the first draft of the present article was being prepared, the preprint \cite{Kim} was brought to our notice. In \cite{Kim}, the author proves Theorem \ref{thm:maintheorem} in greater generality, under an invariance assumption of the Katsura ideal $J_E$, however. We do not need such an assumption, and as such our proof is an improvement over the proof given in \cite{Kim}. On the other hand, we require the Hilbert $\mathrm{C}^*$-module $E$ to be finitely generated. 
	\item We prove a somewhat stronger statement than the covariance of $(t,\pi)$, in the notations of the above proof, namely, \[\psi_t \circ \phi=\pi,\]hinging heavily on the unitality of $\mathrm{C}(G)$ and of $\mathcal{K}(E)$.
\end{itemize}
\end{remarks}

\begin{definition}
\label{def:liftedaction}
Let $A$ be a unital $\mathrm{C}^*$-algebra, $(E,\phi)$ be a $\mathrm{C}^*$-correspondence over $A$, and $G$ be a compact quantum group. An action $\rho : \mathcal{O}_E \rightarrow \mathcal{O}_E \otimes \mathrm{C}(G)$ of $G$ on the Pimsner algebra $\mathcal{O}_E$ is said to be \textit{a lift} if there are $G$-actions $\alpha$ and $\lambda$ on $A$ and on $E$, respectively, such that the following are satisfied.
\begin{itemize}
	\item $(A,\alpha)$ is a $G$-$\mathrm{C}^*$-algebra;
	\item $(E,\phi,\lambda)$ is a $G$-equivariant $\mathrm{C}^*$-correspondence over the $G$-$\mathrm{C}^*$-algebra $(A,\alpha)$;
	\item $\rho$ coincides with $\omega$ as in Theorem \ref{thm:maintheorem}.
\end{itemize}
\end{definition}
	
Thanks to the previous theorem, any action of a compact quantum group $G$ on a $\mathrm{C}^*$-correspondence $(E,\phi)$ over a $\mathrm{C}^*$-algebra $A$ automatically lifts to an action on the corresponding Pimsner algebra $\mathcal{O}_E$. However, we cannot expect all actions on the Pimsner algebra to be lifts of actions on the underlying $\mathrm{C}^*$-correspondence in the sense of Definition \ref{def:liftedaction}. One may look for a characterization of all the actions on the Pimsner algebra that are lifts of coactions on the underlying $\mathrm{C}^*$-correspondence. In general however, it seems to be a hard problem to obtain such a characterization. We are able to make some progress in this direction. Let us start with a necessary condition.
	
\begin{proposition}
\label{prop:gaugeequivariance}
Let $G$ be a compact quantum group, $(A,\alpha)$ be a $G$-$\mathrm{C}^*$-algebra and  $(E,\phi,\lambda)$ be a $G$-equivariant $\mathrm{C}^*$-correspondence over $A$. Then for all $z \in \mathbb{T}$, 
\[(\gamma_z \otimes \mathrm{id}_{\mathrm{C}(G)})\circ \omega=\omega \circ \gamma_z,
\]where $\omega$ as in Theorem \textup{\ref{thm:maintheorem}}, and $\gamma$ is the gauge action on $\mathcal{O}_E$.
\end{proposition}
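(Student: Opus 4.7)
My plan is to prove the identity by showing that the two $*$-homomorphisms $(\gamma_z \otimes \mathrm{id}_{\mathrm{C}(G)})\circ \omega$ and $\omega \circ \gamma_z$ from $\mathcal{O}_E$ to $\mathcal{O}_E \otimes \mathrm{C}(G)$ agree on the generating sets $k_A(A)$ and $k_E(E)$, and then invoke the universal property of the Pimsner algebra (equivalently, the fact that $\mathcal{O}_E$ is generated as a $\mathrm{C}^*$-algebra by $k_A(A) \cup k_E(E)$). This is the standard template used in the proof of Theorem \ref{thm:maintheorem} itself, applied here for $z$ fixed.

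For the coefficient generators, I would fix $a \in A$ and compute both sides using the defining identity $\omega \circ k_A = (k_A \otimes \mathrm{id}_{\mathrm{C}(G)})\circ \alpha$ from Theorem \ref{thm:maintheorem}, together with the fact that the gauge action satisfies $\gamma_z \circ k_A = k_A$. Then
\[
(\gamma_z \otimes \mathrm{id}_{\mathrm{C}(G)})\circ \omega \circ k_A(a) = (\gamma_z \otimes \mathrm{id}_{\mathrm{C}(G)})(k_A \otimes \mathrm{id}_{\mathrm{C}(G)})\alpha(a) = (k_A \otimes \mathrm{id}_{\mathrm{C}(G)})\alpha(a),
\]
which equals $\omega(k_A(a)) = \omega \circ \gamma_z \circ k_A(a)$, since $\gamma_z$ fixes $k_A(a)$. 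So agreement on $k_A(A)$ is immediate.

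For the module generators, I would fix $\xi \in E$ and use $\omega \circ k_E = (k_E \otimes \mathrm{id}_{\mathrm{C}(G)})\circ \lambda$ together with the scalar action $\gamma_z \circ k_E = z \cdot k_E$. Writing $\lambda(\xi)$ (formally, or approximating in norm) as a finite sum $\sum_i \xi_i \otimes g_i$ with $\xi_i \in E$ and $g_i \in \mathrm{C}(G)$, one sees
\[
(\gamma_z \otimes \mathrm{id}_{\mathrm{C}(G)})\bigl((k_E \otimes \mathrm{id}_{\mathrm{C}(G)})\lambda(\xi)\bigr) = \sum_i \gamma_z(k_E(\xi_i)) \otimes g_i = z \sum_i k_E(\xi_i) \otimes g_i,
\]
and the right-hand side $\omega \circ \gamma_z \circ k_E(\xi) = z \,\omega(k_E(\xi)) = z \,(k_E \otimes \mathrm{id}_{\mathrm{C}(G)})\lambda(\xi)$ gives the same expression. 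One then makes the argument rigorous for general $\xi$ by a continuity/approximation argument (both sides are norm-continuous in $\xi$), or by noting that $(\gamma_z \otimes \mathrm{id}_{\mathrm{C}(G)}) = (\gamma_z \otimes \mathrm{id}_{\mathrm{C}(G)})$ is a bounded linear map commuting with the evident linear structures, so the identity propagates from elementary tensors to the whole of $E \otimes \mathrm{C}(G)$.

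There is no real obstacle here; the only mildly delicate point is the handling of $\lambda(\xi)$ as an element of $E \otimes \mathrm{C}(G)$, which is not literally a finite sum but a norm-limit. Once one observes that both $(\gamma_z \otimes \mathrm{id}_{\mathrm{C}(G)})\circ (k_E \otimes \mathrm{id}_{\mathrm{C}(G)})$ and $z \cdot (k_E \otimes \mathrm{id}_{\mathrm{C}(G)})$ are bounded linear maps $E \otimes \mathrm{C}(G) \to \mathcal{O}_E \otimes \mathrm{C}(G)$ that agree on the algebraic tensor product $E \odot \mathrm{C}(G)$, their equality on $\lambda(\xi)$ follows by density. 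Since the two $*$-homomorphisms agree on a generating set, they coincide on all of $\mathcal{O}_E$.
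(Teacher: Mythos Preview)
Your proposal is correct and follows essentially the same route as the paper: reduce to checking agreement on $k_A(A)$ and $k_E(E)$ via the universal property, then use the defining identities for $\omega$ together with $\gamma_z \circ k_A = k_A$ and $\gamma_z \circ k_E = z\,k_E$. The only superfluous detour is the approximation of $\lambda(\xi)$ by finite sums; you can simply observe that $(\gamma_z \otimes \mathrm{id}_{\mathrm{C}(G)})\circ(k_E \otimes \mathrm{id}_{\mathrm{C}(G)}) = (\gamma_z \circ k_E)\otimes \mathrm{id}_{\mathrm{C}(G)} = z\,(k_E \otimes \mathrm{id}_{\mathrm{C}(G)})$ as bounded linear maps on $E \otimes \mathrm{C}(G)$, which is exactly the cleaner formulation you arrive at in your last paragraph.
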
 

\begin{proof}
We begin by observing that it suffices to show, by universality (and uniqueness), that the two $*$-homomorphisms $(\gamma_z \otimes \mathrm{id}_{\mathrm{C}(G)})\circ \omega$ and $\omega \circ \gamma_z$ agree on $k_E(E)$ and $k_A(A)$, i.e., 
\[
(\gamma_z \otimes \mathrm{id}_{\mathrm{C}(G)})\circ \omega \circ k_E=\omega \circ \gamma_z \circ k_E,
\]and
\[
(\gamma_z \otimes \mathrm{id}_{\mathrm{C}(G)})\circ \omega \circ k_A=\omega \circ \gamma_z \circ k_A.	
\]However, using \[\omega \circ k_E=(k_E \otimes \mathrm{id}_{\mathrm{C}(G)}) \circ \lambda, \text{ and, } \omega \circ k_A=(k_A \otimes \mathrm{id}_{\mathrm{C}(G)})\circ \alpha,\]respectively, and the explicit form of $\gamma$ provided after Definition \ref{def:pimsmeralgebra}, we see that the above two identities are indeed satisfied. This completes the proof.
\end{proof}
	
Thus it makes sense to single out the class of actions on the Pimsner algebra that satisfy Proposition \ref{prop:gaugeequivariance} and allows us to make the following definition.

\begin{definition}
\label{def:gaugeequivariance}
Let $A$ be a unital $\mathrm{C}^*$-algebra, $(E,\phi)$ be a $\mathrm{C}^*$-correspondence over $A$ and $G$ be a compact quantum group. An action $\rho : \mathcal{O}_E \rightarrow \mathcal{O}_E \otimes \mathrm{C}(G)$ of $G$ on the Pimsner algebra $\mathcal{O}_E$ is said to be \textit{gauge-equivariant} if for all $z \in \mathbb{T}$, 
\[(\gamma_z \otimes \mathrm{id}_{\mathrm{C}(G)})\circ \rho=\rho \circ \gamma_z,
\]where $\gamma$ is the gauge action on $\mathcal{O}_E$.
\end{definition}

The gauge-equivariance is not sufficient though. To produce an example where gauge-equivariance of an action on the Pimsner algebra does not imply that the action is a lift of an action on the $\mathrm{C}^*$-correspondence in the sense of Definition \ref{def:liftedaction}, we recall that the Cuntz algebra $\clo_{n}$ may be seen as the Pimsner algebra arising from the $\mathrm{C}^*$-correspondence of its graph. Denoting the generating partial isometries of $\clo_{n}$ by $S_{i}$, for $i=1,\dots,n$, it is clear that for any action $\rho$ of a compact quantum group $G$ to be a lift of an action on the $\mathrm{C}^*$-correspondence coming from the graph, $\rho$ has to be `linear', in the sense that $\rho(S_{j})=\sum_{i=1}^n S_{i} \ot q_{ij}$ for some $q_{ij} \in \mathrm{C}(G)$, $j=1,\dots,n$. With this observation in hand, we shall produce a `non-linear', gauge-equivariant action of the compact (quantum) group $\mathbb{T}^n=(\mathrm{C}(\mathbb{T}^{n}),\Delta_{\mathbb{T}^n})$ on $\clo_{n}$. We denote the generating unitaries of $\mathrm{C}(\mathbb{T}^{n})$ by $z_{1},\dots,z_{n}$. We recall that the comultiplication is given on $z_{i}$ by $\Delta(z_{i})=z_{i} \ot z_{i}$ for all $i=1,\dots,n$. With these notations, we have the following theorem.
	
\begin{theorem}
\label{thm:nonlinearaction} 
The action $\rho$ of $\mathbb{T}^n$ on $\clo_{n}$ given by
\[
\rho(S_{i})=(S_{i} \otimes 1_{\mathrm{C}(\mathbb{T}^n)})u,
\]is `non-linear', and gauge-equivariant, where $u$ is the element $\sum_{k=1}^{n}S_{k}S_{k}^*\ot z_{k} \in \clo_{n}\ot \mathrm{C}(\mathbb{T}^{n})$.
\end{theorem}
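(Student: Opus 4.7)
The plan is to (1) construct $\rho$ as a $*$-homomorphism via the universal property of $\mathcal{O}_n$, (2) verify coassociativity and the Podle\'s condition, (3) check gauge-equivariance, and (4) rule out linearity by a direct obstruction. For step (1), first observe that $u$ is unitary in $\mathcal{O}_n \otimes \mathrm{C}(\mathbb{T}^n)$: using $S_j^* S_k = \delta_{jk}$ and $\sum_k S_k S_k^* = 1$, both $uu^*$ and $u^*u$ collapse to $1$. Setting $T_i := (S_i \otimes 1)u$, one gets $T_i^* T_j = u^*(S_i^* S_j \otimes 1)u = \delta_{ij}$ and $\sum_i T_i T_i^* = \sum_i (S_i \otimes 1)uu^*(S_i^* \otimes 1) = 1$, so the universal property of $\mathcal{O}_n$ produces a unique unital $*$-homomorphism $\rho$ with $\rho(S_i) = T_i = \sum_k S_i S_k S_k^* \otimes z_k$.

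Coassociativity is verified on generators: the identity $u(S_k S_k^* \otimes 1) = S_k S_k^* \otimes z_k$ reduces both $(\rho \otimes \mathrm{id})\rho(S_i)$ and $(\mathrm{id} \otimes \Delta_{\mathbb{T}^n})\rho(S_i)$ to $\sum_k S_i S_k S_k^* \otimes z_k \otimes z_k$. The Podle\'s condition is the main technical point; the plan is to derive the closed formula
\[
\rho(S_\mu S_\nu^*) \;=\; S_\mu S_\nu^* \otimes \zeta_\mu \overline{\zeta_\nu}, \qquad \zeta_\mu := z_{i_2}\cdots z_{i_p} \text{ for } \mu = (i_1,\ldots,i_p),
\]
valid whenever $|\mu|, |\nu| \geq 1$. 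The key simplification uses $S_l^* S_m = \delta_{lm}$ to collapse a double sum and then $\sum_l S_l S_l^* = 1$ to remove the remaining one. Since $\zeta_\mu \overline{\zeta_\nu}$ is a unit in $\mathrm{C}(\mathbb{T}^n)$, multiplying on the right by $1 \otimes \overline{\zeta_\mu}\zeta_\nu f$ places every $S_\mu S_\nu^* \otimes f$ in $[\rho(\mathcal{O}_n)(1 \otimes \mathrm{C}(\mathbb{T}^n))]$. The edge cases where $\mu$ or $\nu$ is empty follow from $\sum_l S_l S_l^* = 1$: for instance, $S_\mu \otimes f = \sum_l S_\mu S_l S_l^* \otimes f$, and each summand has both multi-indices non-empty, so falls under the previous case. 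Density of such monomials then yields Podle\'s.

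Gauge-equivariance is immediate from $\gamma_z(S_k S_k^*) = S_k S_k^*$: both $(\gamma_z \otimes \mathrm{id})\rho(S_i)$ and $\rho(\gamma_z(S_i))$ equal $z\sum_k S_i S_k S_k^* \otimes z_k$. Finally, to prove non-linearity, suppose for contradiction that $\rho(S_i) = \sum_j S_j \otimes q_{ji}$ for some $q_{ji} \in \mathrm{C}(\mathbb{T}^n)$. Multiplying on the left by $S_i^* \otimes 1$ gives on one side $\sum_j S_i^* S_j \otimes q_{ji} = 1 \otimes q_{ii}$ and on the other $(S_i^* S_i \otimes 1)u = u$, so that $u \in 1 \otimes \mathrm{C}(\mathbb{T}^n)$. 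But applying a state slice $(\varphi \otimes \mathrm{id})$ for a state $\varphi$ of $\mathcal{O}_n$ yields $\sum_k \varphi(S_k S_k^*) z_k$; this would have to be independent of $\varphi$, contradicting the linear independence of $z_1, \dots, z_n$ in $\mathrm{C}(\mathbb{T}^n)$ together with the fact that each non-trivial projection $S_k S_k^*$ takes distinct values under different states. The main obstacle throughout is the Podle\'s step; once the closed form for $\rho(S_\mu S_\nu^*)$ is in hand, every remaining item is mechanical.
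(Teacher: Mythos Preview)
Your proof is correct. The overall architecture matches the paper's: verify that $u$ is unitary, use the universal property of $\mathcal{O}_n$ to produce $\rho$, check coassociativity on generators via $\rho(S_iS_j^*)=S_iS_j^*\otimes 1$, then establish the Podle\'s condition, gauge-equivariance, and non-linearity.

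The main divergence is in the Podle\'s step. The paper takes a short-cut: from $\rho(S_kS_k^*)=S_kS_k^*\otimes 1$ it gets $\sum_k\rho(S_kS_k^*)(1\otimes z_k^*)=u^*$, whence $\sum_k\rho(S_iS_kS_k^*)(1\otimes z_k^*)=S_i\otimes 1$, so each $S_i\otimes 1$ lies in $[\rho(\mathcal{O}_n)(1\otimes \mathrm{C}(\mathbb{T}^n))]$; it then invokes the multiplication-closure argument already proved in Theorem~\ref{thm:maintheorem}. You instead compute the closed formula $\rho(S_\mu S_\nu^*)=S_\mu S_\nu^*\otimes \zeta_\mu\overline{\zeta_\nu}$ for all monomials, which is more work but makes the proof fully self-contained and yields a complete description of $\rho$ as a by-product. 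Both routes are valid; the paper's is shorter because it can recycle an earlier argument, while yours avoids that dependency.

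A second difference: the paper simply asserts non-linearity as ``clear'', whereas you supply an explicit obstruction (forcing $u\in 1\otimes\mathrm{C}(\mathbb{T}^n)$ and deriving a contradiction). Your argument is sound; a marginally cleaner finish is to multiply $u=1\otimes q$ by $S_jS_j^*\otimes 1$ to obtain $z_j=q$ for every $j$, which is absurd for $n\geq 2$, rather than passing through state slices.
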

	
\begin{proof}  
We begin by observing that $\rho$ so defined is clearly gauge-equivariant and `non-linear'. We have to show that it indeed defines an action of $\mathbb{T}^n$. First, let us check that $\rho$ is a well-defined $*$-homomorphism. 

To begin with, using the unitarity of $z_{i}$, for $i=1,\dots,n$, it is easy to see that $u$ is a unitary element of $\clo_{n}\ot \mathrm{C}(\mathbb{T}^{n})$, and so for all $1 \leq i,j \leq n$, \[\rho(S^*_i)\rho(S_j)=\delta_{ij}(1\ot 1).\] Moreover, \[\sum_{k=1}^{n}\rho(S_k)\rho(S^*_k)=1 \ot 1,\] so that by the universality of $\clo_{n}$, $\rho$ is well-defined. It is also easy to observe that for all $1 \leq i,j \leq n$, \[\rho(S_iS^*_j)=S_iS^*_j \ot 1\] and coassociativity of $\rho$ follows. Again using \[\rho(S_{i}S_{i}^*)=S_{i}S_{i}^* \ot 1\] for all $i=1,\dots,n$, one can see that \[\sum_{k=1}^{n}\rho(S_{k}S_{k}^*)(1 \ot z_{k}^*)=u^*\] and consequently, \[\sum_{k=1}^{n}\rho(S_{i}S_{k}S_{k}^*)(1 \ot z_{k}^*)=(S_{i} \otimes 1_{\mathrm{C}(\mathbb{T}^n)})uu^*=S_{i} \ot 1_{\mathrm{C}(\mathbb{T}^n)}\] for all $i=1,\dots,n$. Thus $S_{i} \ot 1_{\mathrm{C}(\mathbb{T}^n)}$, for all $i=1,\dots,n$, belongs to $[\rho(\clo_{n})(1_{\mathcal{O}_n} \ot \mathrm{C}(\mathbb{T}^{n}))]$ and arguing as in the last part of the proof of Theorem \ref{thm:maintheorem}, the Podle\'s condition follows. This completes the proof.
\end{proof}
	
Now we shall prove that the gauge-equivariance condition as in Definition \ref{def:gaugeequivariance} is also sufficient, provided we restrict ourselves to the class of Pimsner algebras arising from principal $\mathbb{T}$-bundles, as described below (we provide a detailed description as this example covers a lot of other instances). We remark, however, that the special structure of a $\mathbb{T}$-principal bundle is only used to ensure that the Hilbert $\mathrm{C}^*$-module for the $\mathrm{C}^*$-correspondence is full and finitely generated, so that our Theorem \ref{thm:maintheorem} applies.

\begin{example}
\label{exa:principal}\cites{Gysin,Abadie1998,Abadie2009}
We recall that a unital $\mathbb{T}$-$\mathrm{C}^*$-algebra $(A,\gamma)$ (without any chance of confusion, we denote the toral action by the same symbol, $\gamma$, as was used to denote the gauge action) comes with an associated $\mathbb{Z}$-grading defined as follows. We call an element $a \in A$ homogeneous of degree $n \in \mathbb{Z}$ if $\gamma_z(a)=z^na$ for all $z \in \mathbb{T}$ and write $\mathrm{deg}(a)=n$. For each $n \in \mathbb{Z}$, we let $A(n)$ denote the set consisting of homogeneous elements of degree $n$: $A(n)=\{a \in A \mid \mathrm{deg}(a)=n\}$. The collection $\{A(n)\}_{n \in \mathbb{Z}}$ enjoys the following: 
\begin{itemize}
	\item for each $n \in \mathbb{Z}$, $A(n)$ is a closed subspace of $A$;
	\item for $m,n \in \mathbb{Z}$, $A(m)A(n) \subseteq A(m+n)$;
	\item for each $n \in \mathbb{Z}$, $A(n)^*=A(-n)$;
	\item the algebraic direct sum $\bigoplus_{n \in \mathbb{Z}}A(n)$ is norm-dense in $A$.
\end{itemize}
In particular, the fixed point subalgebra $A(0)$ is a $\mathrm{C}^*$-algebra and the first spectral subspace $A(1)$ is a right Hilbert $A(0)$-module, where the right $A(0)$-module structure is given by multiplication from the right within the $\mathrm{C}^*$-algebra $A$, and the $A(0)$-valued inner product is given by \[\langle \xi,\eta \rangle=\xi^*\eta,\] for $\xi,\eta \in A(1)$. We define the $*$-homomorphism $\phi : A(0) \raro \cll(A(1))$ by \[\phi(a)(\xi)=a \xi,\] for $\xi \in A(1)$ and $a \in A(0)$. Yet, the pair $(A(1),\phi)$ is \textit{not} a $\mathrm{C}^*$-correspondence over $A(0)$ in our sense; we still require that $A(1)$ be full and finitely generated over $A(0)$. We recall that the $\mathbb{T}$-action $\gamma$ on the $\mathbb{T}$-$\mathrm{C}^*$-algebra $A$ is said to be principal if the associated $\mathbb{Z}$-grading is strong, i.e., for $m,n \in \mathbb{Z}$, \[A(m)A(n)=A(m+n).\] Assume further, that the $\mathrm{C}^*$-algebra $A(0)$ is separable, that $A(1)$ and $A(-1)$ are full over $A(0)$. Then $(A(1),\phi)$ is a $\mathrm{C}^*$-correspondence over $A(0)$. In this case, the Pimsner algebra $\clo_{A(1)}$ associated to the $\mathrm{C}^*$-correspondence $(A(1),\phi)$ is isomorphic to $A$ and the gauge action coincides with the initial $\mathbb{T}$-action (thus relieving us from any possible confusion with the choice of notation).  
\end{example}
	
\begin{theorem}
\label{thm:sufficiency}
Let $(A,\gamma)$ be a unital $\mathbb{T}$-$\mathrm{C}^*$-algebra such that
\begin{itemize}
	\item the $\mathbb{T}$-action $\gamma$ is principal;
	\item the fixed point algebra $A(0)$ is separable;
	\item the spectral subspaces $A(1)$ and $A(-1)$ are full over $A(0)$,
\end{itemize}so that by Example \textup{\ref{exa:principal}} above, there is an isomorphism $\mathcal{O}_{A(1)} \cong A$. Let $G$ be a compact quantum group and $\rho : A \rightarrow A \otimes \mathrm{C}(G)$ be a gauge-equivariant $G$-action on $A$ in the sense of Definition \textup{\ref{def:gaugeequivariance}}, i.e., for all $z \in \mathbb{T}$, 
\[(\gamma_z \otimes \mathrm{id}_{\mathrm{C}(G)})\circ \rho=\rho \circ \gamma_z.
\] Then $\rho$ is a lift in the sense of Definition \textup{\ref{def:liftedaction}}.
\end{theorem}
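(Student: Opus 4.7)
The strategy is to reverse-engineer a $G$-action on $A(0)$ and a $G$-action on the Hilbert $A(0)$-module $A(1)$ from $\rho$, using gauge-equivariance, and then to invoke the uniqueness part of Theorem \ref{thm:maintheorem}. First, gauge-equivariance forces $\rho$ to respect the spectral decomposition: for $a \in A(n)$ one has $(\gamma_{z} \otimes \mathrm{id}_{\mathrm{C}(G)})\rho(a) = \rho(\gamma_{z}(a)) = z^{n}\rho(a)$, so $\rho(a)$ lies in the degree-$n$ spectral subspace of $A \otimes \mathrm{C}(G)$, namely $A(n) \otimes \mathrm{C}(G)$. In particular, $\rho$ restricts to a unital $*$-homomorphism $\alpha : A(0) \to A(0) \otimes \mathrm{C}(G)$ and to a linear map $\lambda : A(1) \to A(1) \otimes \mathrm{C}(G)$, which are the candidate actions to build.

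Next, I would verify that $(A(0), \alpha)$ is a $G$-$\mathrm{C}^*$-algebra and that $(A(1), \phi, \lambda)$ is a $G$-equivariant $\mathrm{C}^*$-correspondence over $(A(0), \alpha)$. Coassociativity of both $\alpha$ and $\lambda$ is inherited directly from that of $\rho$. The module compatibilities $\lambda(\xi b) = \lambda(\xi)\alpha(b)$ and $\langle \lambda(\xi), \lambda(\eta)\rangle = \alpha(\langle \xi, \eta\rangle)$, together with the $G$-equivariance of $\phi$ (which reads $\lambda(a\xi) = \alpha(a)\lambda(\xi)$), all follow at once from the multiplicativity of $\rho$, the fact that the $A(0)$-valued inner product on $A(1)$ is $\langle \xi, \eta\rangle = \xi^{*}\eta$, and the fact that the left action of $A(0)$ on $A(1)$ is ordinary multiplication in $A$.

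The main work is in establishing the two Podle\'s conditions. Here the key tool is the family of spectral projections $E_{n} : A \to A(n)$ defined by averaging, $E_{n}(a) = \int_{\mathbb{T}} z^{-n}\gamma_{z}(a)\, dz$; these are bounded linear idempotents onto $A(n)$, and gauge-equivariance of $\rho$ translates into the intertwining identity $(E_{n} \otimes \mathrm{id}_{\mathrm{C}(G)}) \circ \rho = \rho \circ E_{n}$. Applying $E_{0} \otimes \mathrm{id}_{\mathrm{C}(G)}$ and $E_{1} \otimes \mathrm{id}_{\mathrm{C}(G)}$ respectively to the Podle\'s identity $[\rho(A)(1 \otimes \mathrm{C}(G))] = A \otimes \mathrm{C}(G)$ for $\rho$, and using boundedness to move the projection past the closure, should yield $[\alpha(A(0))(1 \otimes \mathrm{C}(G))] = A(0) \otimes \mathrm{C}(G)$ and $[\lambda(A(1))(1 \otimes \mathrm{C}(G))] = A(1) \otimes \mathrm{C}(G)$. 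I expect this step, ensuring that the spectral averaging genuinely transforms the ambient Podle\'s condition into its graded counterparts, to be the main technical point.

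Finally, Theorem \ref{thm:maintheorem} applied to $(A(1), \phi, \lambda)$ produces a unique unital $*$-homomorphism $\omega : \mathcal{O}_{A(1)} \to \mathcal{O}_{A(1)} \otimes \mathrm{C}(G)$ whose compositions with $k_{A(1)}$ and $k_{A(0)}$ are determined by $\lambda$ and $\alpha$. Under the identification $\mathcal{O}_{A(1)} \cong A$ of Example \ref{exa:principal}, the universal embeddings $k_{A(1)}$ and $k_{A(0)}$ become the natural inclusions $A(1) \hookrightarrow A$ and $A(0) \hookrightarrow A$. By construction, $\rho$ is itself a unital $*$-homomorphism $A \to A \otimes \mathrm{C}(G)$ whose restrictions to $A(1)$ and $A(0)$ are $\lambda$ and $\alpha$, so the uniqueness clause of Theorem \ref{thm:maintheorem} forces $\rho = \omega$, exhibiting $\rho$ as a lift in the sense of Definition \ref{def:liftedaction}.
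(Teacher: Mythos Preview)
Your proposal is correct and follows essentially the same route as the paper: restrict $\rho$ to the spectral subspaces $A(0)$ and $A(1)$ via gauge-equivariance, verify the correspondence axioms from multiplicativity of $\rho$, obtain the Podle\'s conditions by applying the spectral projections $E_{0}$ and $E_{1}$ (which the paper calls $\operatorname{E}$ and $\operatorname{P}$) to the Podle\'s identity for $\rho$, and conclude by the uniqueness clause of Theorem~\ref{thm:maintheorem}. The only cosmetic difference is that the paper treats the two projections separately rather than as instances of a single family $E_{n}$.
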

	
\begin{proof}
We begin by observing that the gauge-equivariance  of $\rho$ yields 
\[
\rho(A(0)) \subseteq A(0) \otimes \mathrm{C}(G), \text{ and } \rho(A(1)) \subseteq A(1) \otimes \mathrm{C}(G),
\]i.e., $\rho$ preserves the $\mathrm{C}^*$-algebra and the module $A(1)$. Let us denote the restricted actions $\rho|_{A(0)}$ and $\rho|_{A(1)}$ on $A(0)$ and $A(1)$ by $\alpha$ and $\lambda$, respectively. We will show that $(A(0),\alpha)$ is a $G$-$\mathrm{C}^*$-algebra, $(A(1),\phi,\lambda)$ is a $G$-equivariant $\mathrm{C}^*$-correspondence over the $G$-$\mathrm{C}^*$-algebra $(A,\alpha)$ and that $\rho$ coincides with  $\omega$, where $\omega$ as in Theorem \ref{thm:maintheorem}. 

To that end, we first remark that $\alpha$ is indeed a unital $*$-homomorphism and $\lambda$ is indeed a linear map; these follow from the corresponding property of $\rho$. Next, we observe that coassociativity of both $\alpha$ and $\lambda$ follow from that of $\rho$. Now we fix $\xi, \eta \in A(1)$ and $a \in A(0)$. Then
\[
\lambda(\xi a)=\rho(\xi a)=\rho(\xi)\rho(a)=\lambda(\xi)\alpha(a),
\] 
\[
\langle \lambda(\xi),\lambda(\eta) \rangle=\rho(\xi)^*\rho(\eta)=\rho(\xi^*\eta)=\rho(\langle \xi,\eta \rangle)=\alpha(\langle \xi,\eta \rangle),	
\]and
\[
\lambda(\phi(a)\xi)=\lambda(a\xi)=\rho(a\xi)=\rho(a)\rho(\xi)=(\phi \otimes \mathrm{id}_{\mathrm{C}(G)})(\alpha(a))\lambda(\xi),
\]all the equalities being clear from the facts that $\rho$ is a $*$-homomorphism and that $\lambda=\rho|_{A(1)}$ and $\alpha=\rho|_{A(0)}$. Thus we see that indeed condition (1) of Definition \ref{def:Galg}, conditions (1) and (2) of Definition \ref{def:equivariantmodule} and conditions (1) and (3$^{\prime}$) of Definition \ref{def:equivcorr} are satisfied by $\alpha$ and $\lambda$. The proof will be complete, by uniqueness of $\omega$ in Theorem \ref{thm:maintheorem}, provided we could show the Podle\'s conditions for $\alpha$ and $\lambda$. We now proceed to do so.

We recall that there is a conditional expectation $\operatorname{E} :A \raro A$, given by,
\[
\operatorname{E}(a)=\int_{\mathbb{T}}\gamma_{z}(a) \ dz,
\]and that $A(0)$ coincides with $\operatorname{E}(A)$. Since $\rho$ is gauge-equivariant, we have
\[
\rho \circ \operatorname{E}=(\operatorname{E} \otimes \mathrm{id}_{\mathrm{C}(G)})\circ \rho.	
\]Then 
\[
\begin{aligned}
	A(0) \otimes \mathrm{C}(G)={}&\operatorname{E}(A) \otimes \mathrm{C}(G)\\
	={}&(\operatorname{E} \otimes \mathrm{id}_{\mathrm{C}(G)})(A \otimes \mathrm{C}(G))\\
	={}&(\operatorname{E} \otimes \mathrm{id}_{\mathrm{C}(G)})[\rho(A)(1_A \otimes \mathrm{C}(G))]\\
	={}&[\rho(\operatorname{E}(A))(1_A \otimes \mathrm{C}(G))]\\
	={}&[\alpha(A(0))(1_{A(0)} \otimes \mathrm{C}(G))],
\end{aligned}
\]where the first and fifth equalities use the fact that $A(0)=\operatorname{E}(A)$; the third equality uses the Podle\'s condition for $\rho$; the fourth equality uses the identity just above this computation; and finally the fifth equality uses the fact that $\alpha=\rho|_{A(0)}$. Therefore $(A(0),\alpha)$ is a $G$-$\mathrm{C}^*$-algebra.

Now let $\operatorname{P} : A \rightarrow A$ denote the projection onto $A(1)$, obtained from the Banach space decomposition of $A$,
\[
A=A(1) \oplus \overline{\bigoplus_{n\neq 1}A(n)}.
\]Again, since $\rho$ is gauge-equivariant, we have 
\[
\rho \circ \operatorname{P}=(\operatorname{P} \otimes \mathrm{id}_{\mathrm{C}(G)})\circ \rho.		
\]Then 
\[
\begin{aligned}
	A(1) \otimes \mathrm{C}(G)={}&\operatorname{P}(A) \otimes \mathrm{C}(G)\\
	={}&(\operatorname{P} \otimes \mathrm{id}_{\mathrm{C}(G)})(A \otimes \mathrm{C}(G))\\
	={}&(\operatorname{P} \otimes \mathrm{id}_{\mathrm{C}(G)})[\rho(A)(1_A \otimes \mathrm{C}(G))]\\
	={}&[\rho(\operatorname{P}(A))(1_A \otimes \mathrm{C}(G))]\\
	={}&[\lambda(A(1))(1_{A(0)} \otimes \mathrm{C}(G))],
\end{aligned}
\]where the first and fifth equalities use the fact that $A(1)=\operatorname{P}(A)$; the third equality uses the Podle\'s condition for $\rho$; the fourth equality uses the identity just above this computation; and finally the fifth equality uses the fact that $\lambda=\rho|_{A(1)}$. Therefore $(A(1),\phi,\lambda)$ is a $G$-equivariant $\mathrm{C}^*$-correspondence over the $G$-$\mathrm{C}^*$-algebra $(A(0),\alpha)$. Thus as observed above, by the uniqueness of $\omega$ in Theorem \ref{thm:maintheorem}, the proof is now complete.
\end{proof}

\section{KMS states on the Pimsner algebras}
\label{sec:kmsstates}
In this section, after briefly recalling how a \textup{KMS} state with respect to a quasi-free dynamics on a Pimsner algebra induced by a module dynamics in the sense of \cite{Neshveyev} looks like, we provide a necessary and sufficient condition for it to be a $G$-equivariant state, for the action $\omega$ from Theorem \ref{thm:maintheorem}, where $G$ is a compact quantum group of Kac type. A general reference for \textup{KMS} states on a $\mathrm{C}^*$-algebra is \cite{br}. 

\begin{definition}
	Let $A$ be a unital $\mathrm{C}^*$-algebra and $(E,\phi)$ be a $\mathrm{C}^*$-correspondence over $A$. For $n \in \mathbb{N}$, we define a $\mathrm{C}^*$-correspondence $(E^{(n)},\phi_{(n)})$ over $A$ as follows. We set $E^{(0)}=A$, $E^{(1)}=E$ and $E^{(n+1)}=E \otimes_{\phi_{(n)}} E^{(n)}$ for $n \geq 1$. We also define $\phi_{(0)}$ to be the identity of $A$, $\phi_{(1)}=\phi$ and for $n \geq 1$, $\phi_{(n+1)}(a)=\phi(a) \otimes \mathrm{id}_{E^{(n)}}$. 
\end{definition}

We recall that \[E \otimes_{\phi} E=[\{\xi \otimes \eta \mid \xi, \eta \in E\}],\] and $(\xi a) \otimes \eta=\xi \otimes (\phi(a)\eta)$ for $\xi, \eta \in E$ and $a \in A$. More generally, \[E^{(n)}=[\{\xi_1 \otimes \dots \otimes \xi_n \mid \xi_1,\dots, \xi_n \in E\}].\] 
We refer the reader to \cite{Lance}*{Chapter 4} for a detailed discussion on interior tensor product of Hilbert $\mathrm{C}^*$-modules.

\begin{remark}
	We remark that we have omitted $\phi$ in the expression $\xi \otimes \eta$, for the sake of notational convenience and will do so without further comment.
\end{remark}

\begin{proposition}\cite{BS}*{Proposition 2.10}
\label{prop:Gequivtensor}
Let $G$ be a compact quantum group, $(A,\alpha)$ be a $G$-$\mathrm{C}^*$-algebra and  $(E,\phi,\lambda)$ be a $G$-equivariant $\mathrm{C}^*$-correspondence over the $G$-$\mathrm{C}^*$-algebra $(A,\alpha)$. Then there is a linear map \[\lambda_{(2)} : E^{(2)} \rightarrow E^{(2)} \otimes \mathrm{C}(G),\]such that the triple $(E^{(2)},\phi_{(2)},\lambda_{(2)})$ satisfies the conditions of Definition \textup{\ref{def:equivcorr}}, making $(E^{(2)},\phi_{(2)},\lambda_{(2)})$ into a $G$-equivariant $\mathrm{C}^*$-correspondence over the $G$-$\mathrm{C}^*$-algebra $(A,\alpha)$. In Sweedler notation, for $\xi,\eta \in \mathcal{S}(E)$ (the spectral submodule), $\lambda_{(2)}$ is given by \[\lambda_{(2)}(\xi \otimes \eta)=\xi_{(0)} \otimes \eta_{(0)} \otimes \xi_{(1)}\eta_{(1)}.\]
\end{proposition}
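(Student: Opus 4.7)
The plan is to construct $\lambda_{(2)}$ by first defining it algebraically on the spectral submodule via the stated Sweedler formula, then verifying that this descends to the balanced interior tensor product and extends isometrically to the completion. The restriction $\lambda|_{\mathcal{S}(E)} : \mathcal{S}(E) \to \mathcal{S}(E) \odot \mathbb{C}[G]$ is an algebraic coaction, and since $\mathcal{S}(E)$ is a right $\mathcal{S}(A)$-module, the formula $\xi \otimes \eta \mapsto \xi_{(0)} \otimes \eta_{(0)} \otimes \xi_{(1)} \eta_{(1)}$ is at first defined on the free algebraic tensor product $\mathcal{S}(E) \odot \mathcal{S}(E)$ with values in $(\mathcal{S}(E) \odot \mathcal{S}(E)) \odot \mathbb{C}[G]$.

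First I would verify that this formula respects the balancing relation $\xi a \otimes \eta = \xi \otimes \phi(a) \eta$. Starting from the left and using $\lambda(\xi a) = \lambda(\xi) \alpha(a)$, one obtains $\xi_{(0)} a_{(0)} \otimes \eta_{(0)} \otimes \xi_{(1)} a_{(1)} \eta_{(1)}$. Starting from the right and using the $G$-equivariance of $\phi$ in the form $\lambda(\phi(a)\eta) = (\phi \otimes \mathrm{id})(\alpha(a))\lambda(\eta) = \phi(a_{(0)}) \eta_{(0)} \otimes a_{(1)} \eta_{(1)}$, one obtains $\xi_{(0)} \otimes \phi(a_{(0)}) \eta_{(0)} \otimes \xi_{(1)} a_{(1)} \eta_{(1)}$, and these coincide after one further application of the balancing inside $E \otimes_{\phi} E$. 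This shows $\lambda_{(2)}$ is well-defined on $\mathcal{S}(E) \otimes_{\mathcal{S}(A)} \mathcal{S}(E)$.

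Next I would show the inner product is preserved. For $\zeta_i = \xi_i \otimes \eta_i$, the interior-tensor-product formula gives $\langle \zeta_1, \zeta_2 \rangle = \langle \eta_1, \phi(\langle \xi_1, \xi_2 \rangle) \eta_2 \rangle$. Running the parallel computation in $(E \otimes \mathrm{C}(G)) \otimes_{\phi \otimes \mathrm{id}} (E \otimes \mathrm{C}(G))$ and using both $\langle \lambda(\xi_1), \lambda(\xi_2) \rangle = \alpha(\langle \xi_1, \xi_2 \rangle)$ and the $G$-equivariance of $\phi$ to commute $\lambda$ past $\phi$ expresses $\langle \lambda_{(2)}(\zeta_1), \lambda_{(2)}(\zeta_2) \rangle$ as $\alpha(\langle \zeta_1, \zeta_2 \rangle)$. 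Consequently $\lambda_{(2)}$ extends by continuity to an isometric linear map on all of $E^{(2)}$; the right $A$-module compatibility $\lambda_{(2)}(\zeta \cdot a) = \lambda_{(2)}(\zeta) \alpha(a)$ is immediate from the formula, so axioms (1) and (2) of Definition \ref{def:equivariantmodule} hold.

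It remains to verify the coassociativity, the Podle\'s condition, and the $G$-equivariance of $\phi_{(2)} = \phi \otimes \mathrm{id}_E$, all of which can be checked on elementary tensors and then extended by density. Coassociativity $(\mathrm{id} \otimes \Delta_G) \circ \lambda_{(2)} = (\lambda_{(2)} \otimes \mathrm{id}) \circ \lambda_{(2)}$ reduces, via the Sweedler formula, to the coassociativity of $\lambda$ together with that of $\Delta_G$. The $G$-equivariance of $\phi_{(2)}$ follows from the identity $\lambda_{(2)}((\phi(a) \otimes \mathrm{id})(\xi \otimes \eta)) = \lambda_{(2)}(\phi(a) \xi \otimes \eta)$ and the $G$-equivariance of $\phi$ already in hand. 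Finally, the Podle\'s condition is obtained by applying $[\lambda(E)(1_A \otimes \mathrm{C}(G))] = E \otimes \mathrm{C}(G)$ to both slots and closing up under the balancing. The main obstacle, and really the only one, is keeping careful track of which tensor product (interior versus exterior) each calculation takes place in and making sure the balancing relations are handled consistently; once that bookkeeping is done the axioms follow mechanically from those for $\lambda$ and $\alpha$.
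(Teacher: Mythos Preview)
Your argument is correct, but it proceeds differently from what the paper does. The paper does not supply its own proof of this proposition; it cites \cite{BS}*{Proposition 2.10} directly and, in the remark immediately following, records the three identifications
\[
\widetilde{M}(A \otimes \mathrm{C}(G)) \cong A \otimes \mathrm{C}(G),\quad
(A \otimes \mathrm{C}(G)) \otimes_{\phi \otimes \mathrm{id}} (E \otimes \mathrm{C}(G)) \cong E \otimes \mathrm{C}(G),
\]
\[
(E \otimes_{\phi} E) \otimes \mathrm{C}(G) \cong (E \otimes \mathrm{C}(G)) \otimes_{\phi \otimes \mathrm{id}} (E \otimes \mathrm{C}(G)),
\]
which, in the Baaj--Skandalis framework, let one transport the coaction via the unitary $V_{\lambda}$ rather than by an explicit Sweedler computation. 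In other words, the paper's route is the abstract one: build $\lambda_{(2)}$ as the composite of $V_{\lambda}$'s on the two tensor factors through the isomorphism above, so that well-definedness, isometry, and coassociativity are inherited from the unitarity of $V_{\lambda}$ and the functoriality of interior tensor products.

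Your route is the concrete one: fix the formula on $\mathcal{S}(E) \odot \mathcal{S}(E)$, check the balancing and the inner product by hand, and extend by continuity. This is more elementary and entirely self-contained, at the cost of the bookkeeping you flag at the end. The Baaj--Skandalis argument is cleaner conceptually and generalizes without change to longer tensor powers (which is exactly how the paper obtains Corollary~\ref{cor:equivhigher}), whereas your approach makes the Sweedler description \emph{a priori} visible rather than something one reads off afterwards. Either is fine here; just be aware that the density of $\mathcal{S}(E) \otimes_{\mathcal{S}(A)} \mathcal{S}(E)$ in $E^{(2)}$ is being used implicitly when you extend, and that ``isometric'' in your penultimate paragraph really means $\langle \lambda_{(2)}(\zeta),\lambda_{(2)}(\zeta)\rangle = \alpha(\langle\zeta,\zeta\rangle)$, which gives boundedness (indeed isometry, since $\alpha$ is injective) and hence the extension.
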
 

\begin{remark}
We remark that the above proposition uses, together with (and in the notations from) \cite{BS}*{Proposition 2.10}, the following isomorphisms.
\begin{itemize}
	\item Since both $A$ and $\mathrm{C}(G)$ are unital, we have (\cite{BS}*{page 686}),\[\widetilde{M}(A \otimes \mathrm{C}(G)) \cong A \otimes \mathrm{C}(G);\]
	\item since $\phi$ is unital, we have (\cite{BS}*{page 693}),\[(A \otimes \mathrm{C}(G)) \otimes_{\phi \otimes \mathrm{id}_{\mathrm{C}(G)}} (E \otimes \mathrm{C}(G)) \cong E \otimes \mathrm{C}(G);\]
	\item and finally, we have (\cite{BS}*{page 693, proof of Proposition 2.10}), \[(E \otimes_{\phi} E) \otimes \mathrm{C}(G) \cong (E \otimes \mathrm{C}(G)) \otimes_{\phi \otimes \mathrm{id}_{\mathrm{C}(G)}} (E \otimes \mathrm{C}(G)).\]
\end{itemize}
\end{remark}

\begin{corollary}
\label{cor:equivhigher}
Let $G$ be a compact quantum group, $(A,\alpha)$ be a $G$-$\mathrm{C}^*$-algebra and  $(E,\phi,\lambda)$ be a $G$-equivariant $\mathrm{C}^*$-correspondence over the $G$-$\mathrm{C}^*$-algebra $(A,\alpha)$. Then for each $n \geq 1$, there is a linear map \[\lambda_{(n)} : E^{(n)} \rightarrow E^{(n)} \otimes \mathrm{C}(G),\]such that the triple $(E^{(n)},\phi_{(n)},\lambda_{(n)})$ satisfies the conditions of Definition \textup{\ref{def:equivcorr}}, making $(E^{(n)},\phi_{(n)},\lambda_{(n)})$ into a $G$-equivariant $\mathrm{C}^*$-correspondence over the $G$-$\mathrm{C}^*$-algebra $(A,\alpha)$. In Sweedler notation, for $\xi_1,\dots,\xi_n \in \mathcal{S}(E)$, $\lambda_{(n)}$ is given by \[\lambda_{(n)}(\xi_1 \otimes \dots \otimes \xi_n)=\xi_{1(0)} \otimes \dots \otimes \xi_{n(0)} \otimes \xi_{1(1)}\dots\xi_{n(1)}.\]
\end{corollary}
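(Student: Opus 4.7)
The plan is to proceed by induction on $n$. The cases $n = 1$ and $n = 2$ are immediate: $\lambda_{(1)} = \lambda$ is the given action, and $(E^{(2)}, \phi_{(2)}, \lambda_{(2)})$ is provided by Proposition \ref{prop:Gequivtensor}. So assume inductively that $(E^{(n)}, \phi_{(n)}, \lambda_{(n)})$ has been constructed as a $G$-equivariant $\mathrm{C}^*$-correspondence over $(A, \alpha)$, with $\lambda_{(n)}$ given on spectral elements by the stated Sweedler formula.

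For the inductive step, observe that $E^{(n+1)} = E \otimes_{\phi_{(n)}} E^{(n)}$ is the interior tensor product of the two $G$-equivariant $\mathrm{C}^*$-correspondences $(E, \phi, \lambda)$ and $(E^{(n)}, \phi_{(n)}, \lambda_{(n)})$ over the same $G$-$\mathrm{C}^*$-algebra $(A, \alpha)$. A careful reading of \cite{BS}*{Proposition 2.10} shows that the construction there does not require the two factors to coincide; the same argument applies verbatim to the interior tensor product of any two $G$-equivariant correspondences over $(A, \alpha)$, and yields a linear map $\lambda_{(n+1)} : E^{(n+1)} \to E^{(n+1)} \otimes \mathrm{C}(G)$ that makes $(E^{(n+1)}, \phi_{(n+1)}, \lambda_{(n+1)})$ into a $G$-equivariant $\mathrm{C}^*$-correspondence. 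The explicit Sweedler formula for $\lambda_{(n+1)}$ on $\xi_1 \otimes \cdots \otimes \xi_{n+1}$ then follows by writing this as $\xi_1 \otimes (\xi_2 \otimes \cdots \otimes \xi_{n+1})$, applying the two-factor formula of Proposition \ref{prop:Gequivtensor}, and using the inductive description of $\lambda_{(n)}$ on $\xi_2 \otimes \cdots \otimes \xi_{n+1}$.

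A more hands-on route is to simply \emph{define} $\lambda_{(n+1)}$ on algebraic tensors of spectral elements by the formula in the statement and verify its properties directly. The key point is that the formula descends to the interior tensor product, i.e., respects the balancing $\xi a \otimes \zeta = \xi \otimes \phi_{(n)}(a)\zeta$: this follows from the module property $\lambda(\xi a) = \lambda(\xi)\alpha(a)$, the $G$-equivariance $\lambda_{(n)}(\phi_{(n)}(a)\zeta) = (\phi_{(n)} \otimes \mathrm{id})(\alpha(a))\lambda_{(n)}(\zeta)$ from the inductive hypothesis, and the balancing relation in $E^{(n+1)}$ itself. The inner-product compatibility and coassociativity reduce to the corresponding properties of $\lambda$, $\lambda_{(n)}$, $\alpha$, and $\Delta_G$, while $G$-equivariance of $\phi_{(n+1)} = \phi \otimes \mathrm{id}_{E^{(n)}}$ follows from that of $\phi$.

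The anticipated obstacle is verifying the Podle\'s condition $[\lambda_{(n+1)}(E^{(n+1)})(1_A \otimes \mathrm{C}(G))] = E^{(n+1)} \otimes \mathrm{C}(G)$. The strategy is to show that an elementary tensor $\xi \otimes \zeta \otimes c$ lies in the closed linear span on the left by first invoking the Podle\'s condition for $\lambda_{(n)}$ to approximate $\zeta \otimes c$ in $E^{(n)} \otimes \mathrm{C}(G)$ by finite sums of $\lambda_{(n)}(\zeta')(1 \otimes c')$, then using the Podle\'s condition for $\lambda$ together with the balancing in $E^{(n+1)}$ to reassemble the result into the required form. Extension of $\lambda_{(n+1)}$ to all of $E^{(n+1)}$ by continuity is then routine, and completes the induction.
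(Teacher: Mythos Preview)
Your proposal is correct and takes essentially the same approach as the paper: the paper states this as a corollary of Proposition~\ref{prop:Gequivtensor} with no proof given, the implicit argument being precisely the induction on $n$ you spell out. Your level of detail goes well beyond what the paper supplies, but the underlying idea---iterate \cite{BS}*{Proposition 2.10}---is identical.
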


Let $A$ be a unital $\mathrm{C}^*$-algebra and $E$ be a right Hilbert $A$-module. The algebra $\mathcal{L}(E)$ is isomorphic to $E \otimes_A \mathrm{Hom}_A(E,A)$, yielding a unique linear map $\mathrm{Tr} : \mathcal{L}(E) \rightarrow A/[A,A]$ such that $\mathrm{Tr}(x \otimes f)=f(x)$ mod $[A,A]$. Let $\tau$ be any tracial linear functional on $A$. Then $\mathrm{Tr}_{\tau}=\tau \circ \mathrm{Tr}$ is a tracial linear functional on $\mathcal{L}(E)$.

Now let $\mathbb{R}\ni t \mapsto \sigma_{t}$ be a one-parameter automorphism group of $A$ and $\mathbb{R} \ni t \mapsto U_{t}$ be a one-parameter group of isometries on $E$ such that $U_{t}(\phi(a)\xi)=\phi(\sigma_{t}(a))U_{t}(\xi)$ and $\langle U_{t}(\xi),U_{t}(\eta)\rangle=\sigma_{t}\langle\xi,\eta\rangle$; we assume further that $\sigma$ and $U$ are strongly continuous. Then, by the universal property, there is, for each $t \in \mathbb{R}$, a unique automorphism $\delta_{t}:\clo_{E} \raro \clo_{E}$ such that $\delta_t(k_A(a))=k_A(\sigma_t(a))$ and $\delta_t(k_E(\xi))=k_E(U_t(\xi))$, $a \in A$, $\xi \in E$. The resulting one-parameter group $t \mapsto \delta_{t}$ is strongly continuous and is called the quasi-free dynamics on $\mathcal{O}_E$ associated to the module dynamics $U$.

\begin{remark}
We would only be interested when $\sigma$ is the trivial dynamics on $A$. In that case, a module dynamics $U$ is then a one-parameter group of isometries $t \mapsto U_t$ on $E$ such that $U_t(\phi(a)\xi)=\phi(a)U_t(\xi)$ and $\langle U_t(\xi),U_t(\eta) \rangle=\langle \xi,\eta \rangle$.
\end{remark}

With these in hand, we have the following theorem about the \textup{KMS} states on the Pimsner algebra.

\begin{theorem}
\label{thm:kmsstates}\cite{Neshveyev}*{Theorem 2.5} Let $\mathbb{R} \ni t \mapsto U_{t}$ be a one parameter group of isometries on $E$ satisfying the following conditions.
\begin{itemize}
	\item $U_t(\phi(a)\xi)=\phi(a)U_t(\xi)$ and $\langle U_t(\xi),U_t(\eta) \rangle=\langle \xi,\eta \rangle$, for $t \in \mathbb{R}$, $\xi,\eta \in E$;
	\item the vectors $\xi\in E$ such that $\mathrm{Sp}_{U}(\xi) \subset (0,\infty)$ form a dense subspace of $E$, where $\mathrm{Sp}_{U}(\xi)$ is the Arveson spectrum of $\xi$ with respect to $U$.
\end{itemize}
Let $\delta$ be the corresponding quasi-free dynamics on $\clo_{E}$ such that 
\begin{itemize}
	\item $\delta_{t}(k_E(\xi))=k_E(U_{t}(\xi))$ for $\xi \in E$ and
	\item $\delta_{t}(k_A(a))=k_A(a)$ for all $a \in A$,
\end{itemize} and suppose $\beta \in(0,\infty)$. 
\begin{itemize}
	\item If $\varphi$ is a $(\delta,\beta)$-\textup{KMS} state on $\clo_{E},$ then $\tau=\varphi \circ k_A$ is a tracial state on $A$, \[\mathrm{Tr}_{\tau}(\phi(a)e^{-\beta D}) \leq \tau(a), \text{ for } a \in A_+,\] and \[\mathrm{Tr}_{\tau}(\phi(a)e^{-\beta D})=\tau(a) \text{ for } a \in (\ker \phi)^{\perp}.\] Here $D$ is the generator of $U$ i.e., $U_{t}=e^{itD}$ and $\mathrm{Tr}_{\tau}(\phi(a)e^{-\infty D})=0$, by convention.
	\item Conversely, if $\tau$ is a tracial state on $A$ such that \[\mathrm{Tr}_{\tau}(\phi(a)e^{-\beta D}) \leq \tau(a), \text{ for } a \in A_+,\] and \[\mathrm{Tr}_{\tau}(\phi(a)e^{-\beta D})=\tau(a) \text{ for } a \in (\ker \phi)^{\perp},\] then there exists a unique $(\delta,\beta)$-\textup{KMS} state $\varphi$ on $\clo_{E}$ such that $\varphi \circ k_A=\tau$. Moreover, $\varphi$ is determined by $\tau$ through
	\begin{align}
	\begin{aligned}
	{}&\varphi(k_E(\xi_{1})\dots k_E(\xi_{m})k_E(\eta_{n})^*\dots k_E(\eta_{1})^*)\\
	={}&\tau(\langle \eta_{1} \ot \dots \ot \eta_{n},e^{-\beta D}\xi_{1} \ot \dots \ot e^{-\beta D}\xi_{n} \rangle), \text{ if } m=n,\\
	={}&0, \text{ otherwise. } 
\end{aligned}\label{eq:formula}
\end{align}
\end{itemize}
\end{theorem}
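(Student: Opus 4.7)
The plan is to prove the two directions separately, leveraging the universal property of $\mathcal{O}_E$ and the gauge-type structure of the quasi-free dynamics.

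For the forward direction, suppose $\varphi$ is a $(\delta,\beta)$-\textup{KMS} state. Since $\delta_t$ fixes $k_A(a)$ pointwise, the \textup{KMS} relation applied to $k_A(a)$ and $k_A(b)$ immediately forces $\tau=\varphi\circ k_A$ to be tracial. To extract the $\mathrm{Tr}_\tau$ conditions, I would use finite-generation and projectivity of $E$ by picking a finite Parseval-type frame $\{e_i\}_{i=1}^N$ with $\sum_i \theta_{e_i,e_i}=\mathrm{id}_E$, so that under the universal covariant representation $\sum_i k_E(e_i)k_E(e_i)^*=1_{\mathcal{O}_E}$, and for $a\in(\ker\phi)^\perp$, \[k_A(a)=\psi_{k_E}(\phi(a))=\sum_i k_E(\phi(a)e_i)k_E(e_i)^*.\] Applying $\varphi$ to both sides, the \textup{KMS} property lets one cycle $k_E(e_i)^*$ past the remaining factor at the cost of inserting $e^{-\beta D}$ through the analytic extension of $\delta$, and the resulting expression is precisely $\mathrm{Tr}_\tau(\phi(a)e^{-\beta D})$. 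For a general positive $a\in A_+$ one only has an inequality because the Fock-type covariance defect contributes a non-negative term.

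For the converse, I would define $\varphi$ on the dense $*$-subalgebra spanned by monomials $k_E(\xi_1)\cdots k_E(\xi_m)k_E(\eta_n)^*\cdots k_E(\eta_1)^*$ by the formula \eqref{eq:formula}, with $\varphi$ forced to vanish when $m\neq n$ (this is consistent with $\gamma$-averaging). Three checks are then required: well-definedness modulo the defining relations of $\mathcal{O}_E$; positivity; and the \textup{KMS} condition. Well-definedness uses the trace property of $\tau$ together with the equality $\mathrm{Tr}_\tau(\phi(a)e^{-\beta D})=\tau(a)$ on $(\ker\phi)^\perp$, which is exactly what is needed to kill the covariance relation $\psi_{k_E}(\phi(a))-k_A(a)$. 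The \textup{KMS} condition reduces to a direct computation on monomials using $\delta_t(k_E(\xi))=k_E(e^{itD}\xi)$ and traciality of $\tau$. Uniqueness is automatic from the formula, since any $(\delta,\beta)$-\textup{KMS} state is determined by its values on such monomials through the \textup{KMS} relation.

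The principal difficulty is establishing positivity of the extension. I would attack this via a deformed Fock-module construction: form $\mathcal{F}(E)=\bigoplus_{n\geq 0} E^{(n)}$, use the data $(\tau,e^{-\beta D})$ to build a weighted $A$-valued sesquilinear form on $\mathcal{F}(E)$ whose positivity is precisely encoded by the $\mathrm{Tr}_\tau$-inequality, realize $\mathcal{O}_E$ as a quotient of bounded operators on the resulting Hilbert module, and recognize $\varphi$ as the vector state induced by $1\in A=E^{(0)}$. Matching this vector-state description back to \eqref{eq:formula} would complete the argument.
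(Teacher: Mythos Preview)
The paper does not contain a proof of this theorem: it is quoted verbatim from \cite{Neshveyev}*{Theorem 2.5} (Laca--Neshveyev) and used as a black box, the authors explicitly remarking afterwards that they are interested only in the \emph{form} of the state given by \eqref{eq:formula}, not in its existence. There is therefore no ``paper's own proof'' to compare your proposal against.

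That said, your sketch is broadly in the spirit of the original Laca--Neshveyev argument. The forward direction via a finite frame and the \textup{KMS} relation is correct. For the converse, however, your plan to ``define $\varphi$ on monomials by \eqref{eq:formula} and then check well-definedness modulo the relations'' is not how the argument is actually carried out, and for good reason: $\mathcal{O}_E$ is specified by a universal property rather than by explicit generators and relations, so verifying consistency of a linear functional prescribed on monomials is delicate (the monomials are not linearly independent in any controllable way). Laca--Neshveyev instead build the state from the outside in: they construct a positive functional on the Toeplitz algebra $\mathcal{T}_E$ via the full Fock module and the trace $\tau$, show it is \textup{KMS} there, and then use the equality $\mathrm{Tr}_\tau(\phi(a)e^{-\beta D})=\tau(a)$ on $(\ker\phi)^\perp$ to see that it factors through the quotient $\mathcal{T}_E\to\mathcal{O}_E$. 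Your deformed-Fock idea is close to this, but you should route the construction through $\mathcal{T}_E$ rather than attempt a direct well-definedness check on $\mathcal{O}_E$.
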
 

\begin{remark}
We would be interested not in the existence of a \textup{KMS} state on the Pimsner algebra $\mathcal{O}_E$ but the fact that when it indeed does exist, it is of the form given above by Equation (\ref{eq:formula}) in Theorem \ref{thm:kmsstates}.
\end{remark} 

\begin{remark}
We remark that Arveson spectrum is defined, for example, in \cite{pedersen}*{8.1.6, page 385}.	
\end{remark}

\begin{theorem}
\label{thm:kmsequiv}
Let $G$ be a compact quantum group of Kac type, $(A,\alpha)$ be a $G$-$\mathrm{C}^*$-algebra, $(E,\phi,\lambda)$ be a $G$-equivariant $\mathrm{C}^*$-correspondence over the $G$-$\mathrm{C}^*$-algebra $(A,\alpha)$ and $\omega$ be the $G$-action on $\mathcal{O}_E$, as obtained in Theorem \textup{\ref{thm:maintheorem}}. Let $\delta$ be the quasi-free dynamics induced by the module dynamics $U$ satisfying the conditions as in Theorem \textup{\ref{thm:kmsstates}}. Let $U$ be $G$-equivariant, i.e., for all $t \in \mathbb{R}$, \[(U_t \otimes \mathrm{id}_{\mathrm{C}(G)})\circ \lambda=\lambda \circ U_t.\] Let $\varphi$ be a $(\delta,\beta)$-\textup{KMS} state on $\mathcal{O}_E$ and $\tau=\varphi \circ k_A$ be the tracial state on $A$ as in Theorem \textup{\ref{thm:kmsstates}}. Then $\varphi$ is $G$-equivariant if and only if $\tau$ is $G$-equivariant.
\end{theorem}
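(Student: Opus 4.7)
The plan is to prove the nontrivial converse (that $G$-equivariance of $\tau$ forces $G$-equivariance of $\varphi$) through the uniqueness clause in Theorem \ref{thm:kmsstates}. The easier direction is immediate: applying $\varphi \otimes \mathrm{id}_{\mathrm{C}(G)}$ to $\omega \circ k_A = (k_A \otimes \mathrm{id}_{\mathrm{C}(G)}) \circ \alpha$ and using $\tau = \varphi \circ k_A$ gives $(\tau \otimes \mathrm{id}_{\mathrm{C}(G)})\alpha(a) = \tau(a) \cdot 1_{\mathrm{C}(G)}$ whenever $\varphi$ is $G$-invariant.

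For the converse, I would first establish that the dynamics commutes with the $G$-action, i.e.\ $(\delta_t \otimes \mathrm{id}_{\mathrm{C}(G)}) \circ \omega = \omega \circ \delta_t$ for every $t \in \mathbb{R}$. Both sides are $*$-homomorphisms $\mathcal{O}_E \to \mathcal{O}_E \otimes \mathrm{C}(G)$, so by the universal property of $\mathcal{O}_E$ it suffices to check agreement on $k_E(E)$ and on $k_A(A)$: the first reduces to the $G$-equivariance of $U$ via Theorem \ref{thm:maintheorem}, and the second is automatic since $\delta_t$ fixes $k_A(A)$. An immediate consequence is that the conditional expectation $E_G = (\mathrm{id}_{\mathcal{O}_E} \otimes h) \circ \omega$ onto the fixed point algebra $\mathcal{O}_E^G$ commutes with $\delta_t$.

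Next I would show that $\varphi \circ E_G$ is itself a $(\delta, \beta)$-KMS state on $\mathcal{O}_E$. Unitality and positivity are clear since $E_G$ is unital completely positive. For the KMS condition $(\varphi \circ E_G)(ab) = (\varphi \circ E_G)(b\,\delta_{i\beta}(a))$ on a dense set of $\delta$-analytic elements, I would expand $\omega(ab) = \omega(a)\omega(b)$ and use the commutation just established to write $\omega(\delta_{i\beta}(a)) = (\delta_{i\beta} \otimes \mathrm{id}_{\mathrm{C}(G)})\omega(a)$; the KMS property of $\varphi$ handles the $\mathcal{O}_E$-component summand by summand, and the \emph{traciality of $h$} handles the $\mathrm{C}(G)$-component by rearranging factors. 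This is the main technical step and is exactly where the Kac-type hypothesis is indispensable; without traciality of $h$ one could not identify the two sides after applying $\varphi \otimes h$.

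Finally I would compute $(\varphi \circ E_G) \circ k_A$. From $\omega \circ k_A = (k_A \otimes \mathrm{id}_{\mathrm{C}(G)}) \circ \alpha$ one reads off $E_G \circ k_A = k_A \circ E_\alpha$ with $E_\alpha = (\mathrm{id}_A \otimes h)\alpha$, and $G$-equivariance of $\tau$ gives $\tau \circ E_\alpha = \tau$, hence $(\varphi \circ E_G)\circ k_A = \tau$. The uniqueness clause of Theorem \ref{thm:kmsstates} then forces $\varphi \circ E_G = \varphi$. To pass from this scalar equality to $G$-equivariance, set $\Phi = (\varphi \otimes \mathrm{id}_{\mathrm{C}(G)})\circ \omega$; coassociativity of $\omega$ yields $(\Phi \otimes \mathrm{id}_{\mathrm{C}(G)})\circ \omega = \Delta_G \circ \Phi$, and applying $h \otimes \mathrm{id}_{\mathrm{C}(G)}$ together with the left invariance of $h$ and the identity $h \circ \Phi = \varphi$ (which is $\varphi \circ E_G = \varphi$ rewritten) delivers $\Phi(x) = \varphi(x) \cdot 1_{\mathrm{C}(G)}$, completing the argument.
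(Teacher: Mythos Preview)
Your argument is correct. Both you and the paper reduce the converse to the scalar identity $\varphi\circ E_G=\varphi$ (equivalently $(\varphi\otimes h)\omega=\varphi$) and then upgrade it to full $G$-equivariance by the same convolution/coassociativity trick. The difference lies in how that scalar identity is obtained. The paper verifies it by a direct computation on the spanning monomials $k_E(\xi_1)\cdots k_E(\xi_m)k_E(\eta_n)^*\cdots k_E(\eta_1)^*$, plugging in the explicit Laca--Neshveyev formula \eqref{eq:formula} for $\varphi$, expanding $\lambda_{(m)}$ in Sweedler notation, and invoking traciality of $h$ to reorder the $\mathrm{C}(G)$-factors. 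You instead argue abstractly: $\varphi\otimes h$ is $(\delta\otimes\mathrm{id},\beta)$-KMS on $\mathcal{O}_E\otimes\mathrm{C}(G)$ (this is where Kac enters), $\omega$ intertwines the dynamics, hence $\varphi\circ E_G=(\varphi\otimes h)\circ\omega$ is $(\delta,\beta)$-KMS with restriction $\tau$ to $A$, and uniqueness in Theorem~\ref{thm:kmsstates} finishes. Your route avoids the explicit formula and the Sweedler bookkeeping entirely, at the price of needing the observation that KMS states pull back along dynamics-intertwining $*$-homomorphisms; the paper's route is more concrete and makes the role of formula \eqref{eq:formula} visible. One small point: your phrase ``summand by summand'' is slightly informal---the clean way to say it is exactly that $\varphi\otimes h$ is KMS for the product dynamics and then pull back through $\omega$, rather than trying to isolate individual Sweedler legs (which need not be $\delta$-analytic on their own).
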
 

\begin{proof}
We assume first that $\varphi$ is $G$-equivariant and show that $\tau=\varphi \circ k_A$ is $G$-equivariant too. To that end, we fix $a \in A$. Then we have 
\[
\begin{aligned}
	(\tau \otimes \mathrm{id}_{\mathrm{C}(G)})\alpha(a)={}&(\varphi \otimes \mathrm{id}_{\mathrm{C}(G)})(k_A \otimes \mathrm{id}_{\mathrm{C}(G)})\alpha(a)\\
	={}&(\varphi \otimes \mathrm{id}_{\mathrm{C}(G)})\omega(k_A(a))\\
	={}&\varphi(k_A(a)) 1_{\mathrm{C}(G)}\\
	={}&\tau(a) 1_{\mathrm{C}(G)},
\end{aligned}	
\]where the first equality uses the fact that $\tau=\varphi \circ k_A$; the second equality follows from Theorem \ref{thm:maintheorem}; the third equality is from our assumption that $\varphi$ is $G$-equivariant. 

We now assume that $\tau$ is $G$-equivariant and show that $\varphi$ too is $G$-equivariant. We will show that for any $x \in \mathcal{O}_E$,
\[
(\varphi \otimes h)\omega(x)=\varphi(x),
\] where $h$ is the Haar state of the CQG $G$. Before proving this, let us see how we can conclude the proof from this identity. We observe first that the identity can be written, using the standard convolution notation, as 
\[
\varphi \ast h=\varphi.
\]Now, for any $\psi \in \mathrm{C}(G)^*$,
\[
\varphi \ast \psi=(\varphi \ast h) \ast \psi=\varphi \ast (h \ast \psi)=(\varphi \ast h)=\varphi,
\]where the first equality uses the identity just above the computation; the second uses associativity of convolution; the third equality uses invariance of the Haar state $h$. Therefore, for all $\psi \in \mathrm{C}(G)^*$ and all $x \in \clo_{E}$,
\[
(\varphi \ast \psi)(x)=\varphi(x),
\]i.e.,
\[
\psi((\varphi \otimes \mathrm{id}_{\mathrm{C}(G)})\omega(x))=\psi(\varphi(x)1_{\mathrm{C}(G)}).
\]Therefore, we indeed have for all $x \in \clo_{E}$,
\[(\varphi \otimes \mathrm{id}_{\mathrm{C}(G)})\omega(x)=\varphi(x)1_{\mathrm{C}(G)},\]which is what we wanted. So we can now proceed to prove that for any $x \in \mathcal{O}_E$,
\begin{align}
\begin{aligned}
(\varphi \otimes h)\omega(x)=\varphi(x),\label{eq:toshow}
\end{aligned}
\end{align}holds. As the Hilbert $A$-module is assumed to be full and finitely generated, the linear span of the elements of the form $k_E(\xi_{1})\dots k_E(\xi_{m})k_E(\eta_{n})^*\dots k_E(\eta_{1})^* \in \clo_{E}$ is dense in $\clo_{E}$. Therefore, by linearity and continuity of the maps involved, it suffices to prove \eqref{eq:toshow} only for $k_E(\xi_{1})\dots k_E(\xi_{m})k_E(\eta_{n})^*\dots k_E(\eta_{1})^* \in \clo_{E}$. Moreover, $\xi_1,\dots,\xi_m,\eta_1,\dots,\eta_n$ further can be chosen from the spectral submodule $\mathcal{S}(E)$, where we have an algebraic coaction of $\mathbb{C}[G]$, hence allowing us to leverage Sweedler notation. However, we shall refrain from saying so and use Sweedler notation freely in what follows. We also remind the reader that $\varphi$ is determined through $\tau$ via the formula \eqref{eq:formula}, as in Theorem \ref{thm:kmsstates} above. We therefore begin by fixing $\xi_1,\dots,\xi_m,\eta_1,\dots,\eta_n \in E$ and assume first that $m \neq n$. Then we have
\[
\begin{aligned}
{}&(\varphi \otimes h)\omega(k_E(\xi_{1})\dots k_E(\xi_{m})k_E(\eta_{n})^*\dots k_E(\eta_{1})^*)\\
={}&(\varphi \otimes h)\Big(\omega(k_E(\xi_{1}))\dots \omega(k_E(\xi_{m}))\omega(k_E(\eta_{n}))^*\dots \omega(k_E(\eta_{1}))^\ast\Big)\\
={}&(\varphi \otimes h)\biggl((k_E \otimes \mathrm{id}_{\mathrm{C}(G)})\lambda(\xi_1)\dots(k_E \otimes \mathrm{id}_{\mathrm{C}(G)})\lambda(\xi_m)\\
&{}((k_E \otimes \mathrm{id}_{\mathrm{C}(G)})\lambda(\eta_n))^*\dots ((k_E \otimes \mathrm{id}_{\mathrm{C}(G)})\lambda(\eta_1))^*\biggr)\\
={}&\varphi\Big(k_E(\xi_{1(0)})\dots k_E(\xi_{m(0)})k_E(\eta_{n(0)})^*\dots k_E(\eta_{1(0)})^*\Big)h\Big(\xi_{1(1)}\dots\xi_{m(1)}\eta^*_{n(1)}\dots\eta^*_{1(1)}\Big)\\
={}&0\\
={}&\varphi(k_E(\xi_{1})\dots k_E(\xi_{m})k_E(\eta_{n})^*\dots k_E(\eta_{1})^*),
\end{aligned}	
\]where the second equality is by Theorem \ref{thm:maintheorem}; the fourth equality is because of $m\neq n$; the fifth equality is again because of $m\neq n$. So \eqref{eq:toshow} holds in this case. Now let us consider the case where $m=n$. We have 
\begin{align}
\begin{aligned}
{}&(\varphi \otimes h)\omega(k_E(\xi_{1})\dots k_E(\xi_{m})k_E(\eta_{m})^*\dots k_E(\eta_{1})^*)\\
={}&(\varphi \otimes h)\Big(\omega(k_E(\xi_{1}))\dots \omega(k_E(\xi_{m}))\omega(k_E(\eta_{m}))^*\dots \omega(k_E(\eta_{1}))^*\Big)\\
={}&(\varphi \otimes h)\biggl((k_E \otimes \mathrm{id}_{\mathrm{C}(G)})\lambda(\xi_1)\dots(k_E \otimes \mathrm{id}_{\mathrm{C}(G)})\lambda(\xi_m)\\
&{}((k_E \otimes \mathrm{id}_{\mathrm{C}(G)})\lambda(\eta_m))^*\dots ((k_E \otimes \mathrm{id}_{\mathrm{C}(G)})\lambda(\eta_1))^*\biggr)\\
={}&\varphi\Big(k_E(\xi_{1(0)})\dots k_E(\xi_{m(0)})k_E(\eta_{m(0)})^*\dots k_E(\eta_{1(0)})^*\Big)\\
&h\Big(\xi_{1(1)}\dots\xi_{m(1)}\eta^*_{m(1)}\dots\eta^*_{1(1)}\Big),
\end{aligned}\label{eq:a}
\end{align}where we have used nothing but Theorem \ref{thm:maintheorem}. Let us consider the expression
\allowdisplaybreaks{
\begin{align}
\begin{aligned}
{}&(\tau \otimes h)(\langle\lambda_m(\eta_{1}\otimes \dots \ot \eta_{m}),\lambda_m(e^{-\beta D}\xi_{1} \ot \dots e^{-\beta D}\xi_{m})\rangle)\\
={}&(\tau \ot h)\biggl(\langle\eta_{1(0)} \ot \dots \ot \eta_{m(0)},e^{-\beta D}\xi_{1(0)} \ot \dots \ot e^{-\beta D}\xi_{m(0)}\rangle  \otimes\\
&\eta_{m(1)}^*\dots\eta_{1(1)}^*\xi_{1(1)}\dots\xi_{m(1)}\biggr)\\
={}&\tau\biggl(\langle\eta_{1(0)} \ot \dots \ot \eta_{m(0)},e^{-\beta D}\xi_{1(0)} \ot \dots \ot e^{-\beta D}\xi_{m(0)}\rangle\biggr)\\
&h(\eta_{m(1)}^*\dots\eta_{1(1)}^*\xi_{1(1)}\dots\xi_{m(1)})\\
={}&\varphi(k_E(\xi_{1(0)}) \dots k_E(\xi_{m(0)})k_E(\eta_{m(0)})^*\dots k_E(\eta_{1(0)})^*)\\
&h(\xi_{1(1)}\dots\xi_{m(1)}\eta_{m(1)}^*\dots\eta_{1(1)}^*),
\end{aligned}\label{eq:b}
\end{align}
}where the first equality is by the expression of $\lambda_m$ as in Corollary \ref{cor:equivhigher} and the assumption that $U$ is $G$-equivariant; the third equality uses the traciality of $h$ and the expression of $\varphi$ in formula \eqref{eq:formula}. Now by combining \eqref{eq:a} and \eqref{eq:b}, we obtain
\begin{align}
\begin{aligned}
&{}(\varphi \ot h)\omega(k_E(\xi_{1}) \dots k_E(\xi_{m})k_E(\eta_{m})^* \dots k_E(\eta_{1})^*)\\
={}&(\tau \ot h)(\langle\lambda_m(\eta_{1} \ot \dots \ot \eta_{m}),\lambda_m(e^{-\beta D}\xi_{1} \ot \dots \otimes e^{-\beta D}\xi_{m})\rangle).	
\end{aligned}\label{eq:c}
\end{align}But the last expression of \eqref{eq:c} can be further simplified as follows.
\begin{align}
\begin{aligned}
{}&(\tau \ot h)(\langle\lambda_m(\eta_{1} \ot \dots \ot \eta_{m}),\lambda_m(e^{-\beta D}\xi_{1} \ot \dots \otimes e^{-\beta D}\xi_{m})\rangle)\\
={}&h((\tau \ot\mathrm{id}_{\mathrm{C}(G)})\alpha(\langle\eta_{1}\ot \dots \ot \eta_{m},e^{-\beta D}\xi_{1} \ot \dots \ot e^{-\beta D}\xi_{m}\rangle))\\
={}&h(\tau(\langle\eta_{1} \ot \dots \ot\eta_{m},e^{-\beta D}\xi_{1}\ot \dots \ot e^{-\beta D}\xi_{m}\rangle)1_{\mathrm{C}(G)})\\
={}&\tau(\langle\eta_{1} \ot \dots \ot\eta_{m},e^{-\beta D}\xi_{1}\ot \dots \ot e^{-\beta D}\xi_{m}\rangle)\\
={}&\varphi(k_E(\xi_{1}) \dots k_E(\xi_{m})k_E(\eta_{m})^*\dots k_E(\eta_{1})^*),
\end{aligned}\label{eq:d}
\end{align}where the first equality is by Corollary \ref{cor:equivhigher}; the second is by our assumption that $\tau$ is $G$-equivariant; the third is because $h$ is a state; and finally the fourth is by formula \eqref{eq:formula}. Now we combine \eqref{eq:c} and \eqref{eq:d}, and obtain
\[
\begin{aligned}
&{}(\varphi \ot h)\omega(k_E(\xi_{1}) \dots k_E(\xi_{m})k_E(\eta_{m})^* \dots k_E(\eta_{1})^*)\\
={}&\varphi(k_E(\xi_{1}) \dots k_E(\xi_{m})k_E(\eta_{m})^*\dots k_E(\eta_{1})^*),	
\end{aligned}
\]which shows that \eqref{eq:toshow} holds, settling this case as well for good. This completes the proof.
\end{proof}

In the remaining three sections, we provide applications of the results obtained in this and the previous sections to the notion of quantum symmetries of graphs.

\section{Applications to quantum symmetries of graphs I: Generalities} 
\label{sec:applicationsone}
In this section, we specialize to the example where the $\mathrm{C}^*$-correspondence comes from a finite graph, with some restrictions and see what the general results of the previous sections have to offer in this very concrete situation. So without further ado, we make contact via the following example.
	
\begin{example}{}
\label{exa:graph}\cite{muhly}*{Example 2.9}
Let $\clg=(\mathcal{G}^{1},\mathcal{G}^{0},r,s)$ be a finite, possibly with loops and multiple edges, directed graph. Here $\mathcal{G}^{1}, \mathcal{G}^{0}$ are the sets of the edges and vertices, respectively, and $r,s$ the range, source maps, respectively. Then $\mathcal{G}$ gives rise to a $\mathrm{C}^*$-correspondence over the $\mathrm{C}^*$-algebra $\mathrm{C}(\mathcal{G}^{0})$ as follows. $\mathrm{C}(\mathcal{G}^{1})$ is made into a right Hilbert $\mathrm{C}(\mathcal{G}^0)$-module via 
\[(\xi \cdot f)(e)=\xi(e)f(r(e)), \quad \langle \xi,\eta \rangle (v)=\sum_{r(e)=v}\overline{\xi(e)}\eta(e),\] where $\xi, \eta \in \mathrm{C}(\mathcal{G}^1)$, $f \in \mathrm{C}(\mathcal{G}^{0})$, $e \in \mathcal{G}^{1}$ and $v \in \mathcal{G}^0$. We define the unital $*$-homomorphism $\phi : \mathrm{C}(\mathcal{G}^0) \rightarrow \mathcal{L}(\mathrm{C}(\mathcal{G}^1))$ by \[\phi(f)(\xi)(e)=f(s(e))\xi(e),\] where $\xi \in \mathrm{C}(\mathcal{G}^1)$, $f \in \mathrm{C}(\mathcal{G}^0)$, $e \in \mathcal{G}^1$. Then since $\mathrm{C}(\mathcal{G}^1)$ is finite dimensional, the pair $(\mathrm{C}(\mathcal{G}^1),\phi)$ forms a $\mathrm{C}^*$-correspondence over $\mathrm{C}(\mathcal{G}^0)$. The corresponding Pimsner algebra $\mathcal{O}_{\mathrm{C}(\mathcal{G}^1)}$ is isomorphic to the graph $\mathrm{C}^*$-algebra  denoted by $\mathrm{C}^*(\clg)$. The gauge action on the Pimsner algebra $\mathrm{C}^*(\clg)$ coincides with the usual gauge action on the graph $\mathrm{C}^*$-algebra $\mathrm{C}^*(\clg)$. 
\end{example}
	
Now we shall recall a few facts about the \textup{KMS} states of graph $\mathrm{C}^*$-algebras coming from finite graphs, possibly with multiple edges but without sources;  by the term without sources, we shall mean that the map $r$ is onto. To that end, let $\clg=(\mathcal{G}^{0},\mathcal{G}^{1},r,s)$ be a finite, directed graph, without sources and $\mathrm{C}^*(\clg)$ be the corresponding graph $\mathrm{C}^*$-algebra; we also denote the adjacency matrix of $\clg$ by $\cld$ and the spectral radius of $\cld$ by $\rho(\cld)$. Then we have the following proposition.

\begin{proposition}\cite{Joardar2}*{Proposition 2.4}
The graph $\mathrm{C}^*$-algebra $\mathrm{C}^*(\clg)$ has a \textup{KMS}-${\ln(\rho(\cld))}$ state if and only if $\rho(\cld)$ is an eigenvalue of $\cld$ with eigenvectors having all its entries non-negative. 
\end{proposition}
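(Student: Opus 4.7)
The plan is to specialise Theorem \ref{thm:kmsstates} to the graph $\mathrm{C}^*$-correspondence $(C(\clg^1),\phi)$ of Example \ref{exa:graph}. First I would identify the module dynamics $U$ on $E = C(\clg^1)$ whose associated quasi-free dynamics on $\clo_E \cong \mathrm{C}^*(\clg)$ is the gauge dynamics: since $\gamma_{e^{it}}$ fixes $k_A$ and scales $k_E(\xi)$ by $e^{it}$, the unique module dynamics producing this is $U_t = e^{it}\mathrm{id}_E$. Hence the generator is $D = \mathrm{id}_E$, so that at $\beta = \ln(\rho(\cld))$ one has $e^{-\beta D} = \rho(\cld)^{-1}\mathrm{id}_E$.

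The next step is to compute the module trace $\mathrm{Tr} : \cll(E) \to C(\clg^0)$ explicitly using the edge-indicator frame $\{\delta_e\}_{e \in \clg^1}$, which satisfies $\langle \delta_e, \delta_e\rangle = \delta_{r(e)}$ and the reconstruction identity $\sum_e \delta_e\langle \delta_e, \xi\rangle = \xi$. Decomposing $\phi(\delta_w) = \sum_{s(e) = w} \theta_{\delta_e, \delta_e}$ and invoking the standard formula $\mathrm{Tr}(\theta_{\xi,\eta}) = \langle \eta, \xi\rangle$ yields $\mathrm{Tr}(\phi(\delta_w)) = \sum_v \cld_{vw}\delta_v$, where $\cld_{vw}$ counts edges from $w$ to $v$.

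Finally, under the standard identification of a tracial state on $C(\clg^0)$ with a probability vector $(\tau_v)_{v \in \clg^0}$, the Neshveyev conditions applied to $a = \delta_w$ become $\rho(\cld)^{-1}(\cld^T\tau)_w \leq \tau_w$, with equality for all $w \in V_s := s(\clg^1)$ (which spans $(\ker\phi)^{\perp}$). Since every sink $w \notin V_s$ satisfies $\cld_{vw} = 0$ for all $v$, the inequality is automatic there, and the entire KMS criterion reduces to finding a probability vector $\tau$ with $\cld^T\tau = \rho(\cld)\tau$ on $V_s$. By the Perron--Frobenius theorem applied to $\cld^T$ (and using that $\cld$ and $\cld^T$ share the same spectrum), this is equivalent to the existence of a non-negative eigenvector of $\cld$ at $\rho(\cld)$.

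I expect the main obstacle to lie precisely in the careful treatment of sinks, where the Neshveyev equality fails and the reduction to a genuine eigenvalue problem requires working with the principal submatrix of $\cld^T$ indexed by $V_s$; one needs a Collatz--Wielandt/padding argument both to extract a bona fide non-negative eigenvector from a merely sub-invariant probability vector, and conversely to normalise a Perron--Frobenius eigenvector (by setting it to zero on sinks and re-normalising) into a probability vector meeting the full Neshveyev criterion.
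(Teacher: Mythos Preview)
The paper does not supply its own proof of this proposition; it is simply quoted from \cite{Joardar2}*{Proposition 2.4}. Your strategy---specialising Theorem~\ref{thm:kmsstates} to the graph correspondence $(\mathrm{C}(\clg^1),\phi)$ with the scalar module dynamics $U_t=e^{it}\mathrm{id}$---is exactly the approach taken in that reference, and your computations of the generator, of $\mathrm{Tr}(\phi(\delta_w))$ via the frame $\{\delta_e\}$, and of the resulting (sub)eigenvector condition are all correct.

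Two small points. First, you overload the symbol $D$ for both the generator of $U$ and the adjacency matrix; the paper writes $\cld$ for the latter, and keeping them apart avoids confusion in the line ``$e^{-\beta D}=\rho(\cld)^{-1}\mathrm{id}_E$''. Second, the sink issue in your final paragraph is less delicate than you anticipate. In the backward direction, any non-negative eigenvector $\mu$ of $\cld$ at $\rho(\cld)>0$ automatically vanishes on sinks (row $w$ of $\cld$ is zero there, so $(\cld\mu)_w=0$ forces $\mu_w=0$); normalising $\mu$ already gives a probability vector meeting the full Laca--Neshveyev criterion, with no padding needed. In the forward direction, you do not have to manufacture an eigenvector out of the sub-invariant vector $\tau$: the weak Perron--Frobenius theorem for non-negative matrices guarantees outright that $\rho(\cld)$ is an eigenvalue with a non-negative eigenvector, so the Collatz--Wielandt extraction you propose is unnecessary.
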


We shall call a \textup{KMS}-$\ln(\rho(\cld))$ state $\varphi$ \textit{distinguished} if $\varphi(p_{v_{i}})=\frac{1}{n}$ for all $i=1,\dots,n$, where $v_{1},\dots,v_{n}$ are the vertices of the underlying graph $\mathcal{G}$. The distinguished \textup{KMS} states are in abundance. For example, any regular graph admits such a distinguished \textup{KMS} state on its $\mathrm{C}^{\ast}$-algebra. Indeed, for a regular graph the spectral radius $\rho(\cld)$ is an eigenvalue with eigenspace spanned by the vector $(1,\dots,1)$; see \cite{Joardar1}*{Theorem 3.6} for more details. 

\begin{remarks}{}\hfill
\label{rem:kac} 
\begin{itemize}
	\item Let us assume that $\rho(\cld)$ is an eigenvalue of $\cld$ so that $\mathrm{C}^*(\clg)$ admits a \textup{KMS}-$\ln(\rho(\cld))$ state. If $\cld$ is an $n \times n$ matrix and $(\mu_{1},\dots,\mu_{n})$ is a normalized eigenvector of $\cld$ with eigenvalue $\rho(\cld)$ (i.e., $\sum_{i=1}^{n}\mu_{i}=1$) then the corresponding \textup{KMS}-$\ln(\rho(\cld))$ state $\varphi$ satisfies 
	\[
	\varphi(p_{v_{i}})=\mu_{i} \quad i=1,\dots,n,
	\]
	where $v_{1},\dots,v_{n}$ are the vertices of $\clg$. In fact, with the notation of Theorem \ref{thm:kmsstates}, the tracial state $\tau$ on $\mathrm{C}(\mathcal{G}^{0})$ corresponding to the \textup{KMS} state $\varphi$ is given by
	\[
	\tau(\delta_{v_{i}})=\mu_{i} \quad i=1,\dots,n.
	\]
	\item The module dynamics in this case is nothing but the scalar dynamics, i.e., $U_t=e^{it}$, $t \in \mathbb{R}$, for the gauge action and therefore it is $G$-equivariant for any compact quantum group $G$ acting on the correspondence.
	\item If for a graph $\clg$, $\mathrm{C}^{\ast}(\clg)$ has a distinguished \textup{KMS} state $\varphi$, then the corresponding tracial state $\tau$ on $\mathrm{C}(\mathcal{G}^{0})$ is given by 
	\[
	\tau(\delta_{v_{i}})=\frac{1}{n} \quad i=1,\dots,n.
	\]So for any $G$-equivariant $\mathrm{C}^{\ast}$-correspondence arising from a finite, directed graph $\clg$, possibly with multiple edges but without any source, such that $\mathrm{C}^{\ast}(\clg)$ admits a distinguished \textup{KMS} state $\varphi$, the above tracial state $\tau$ on $\mathrm{C}(\mathcal{G}^{0})$ is always $G$-equivariant.
\end{itemize}
\end{remarks} 

For the next proposition again, $\cld$ is the adjacency matrix of the graph $\clg$ and $\rho(\cld)$ is its spectral radius.

\begin{proposition}{}
\label{prop:kac}
Let $\clg=(\mathcal{G}^{0},\mathcal{G}^{1},r,s)$ be a finite graph, possibly with multiple edges, but without any source or sink, so that the corresponding graph $\mathrm{C}^*$-algebra $\mathrm{C}^*(\clg)$ has a distinguished \textup{KMS}-$\ln(\rho(\cld))$ state $\varphi$. Let $G$ be a compact quantum group such that 
\begin{itemize}
	\item the $\mathrm{C}^*$-correspondence $(\mathrm{C}(\mathcal{G}^1),\phi)$, arising from $\clg$ is $G$-equivariant;
	\item the $G$-action $\omega$ obtained from Theorem \textup{\ref{thm:maintheorem}} on $\mathrm{C}^*(\mathcal{G})$ is faithful.
\end{itemize} Then $G$ is of Kac type if and only if the \textup{KMS} state $\varphi$ is $G$-equivariant.
\end{proposition}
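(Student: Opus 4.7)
The plan is to reduce the biconditional, via Theorem \ref{thm:kmsequiv}, to a statement about $G$-equivariance of the uniform trace on the coefficient algebra $\mathrm{C}(\mathcal{G}^0)\cong\mathbb{C}^n$. By Remarks \ref{rem:kac}, the tracial state $\tau=\varphi\circ k_A$ is the uniform trace $\tau(\delta_{v_i})=1/n$, and the module dynamics $U_t=e^{it}$ is the scalar dynamics, hence automatically $G$-equivariant for every $G$-action. Writing $\alpha=\omega|_{\mathrm{C}(\mathcal{G}^0)}$ as $\alpha(\delta_{v_j})=\sum_{i}\delta_{v_i}\otimes q^0_{ij}$, the entries are self-adjoint projections satisfying $q^0_{ij}q^0_{ij'}=0$ for $j\neq j'$ and $\sum_j q^0_{ij}=1_{\mathrm{C}(G)}$; $G$-equivariance of $\tau$ is equivalent to the column condition $\sum_{i}q^0_{ij}=1_{\mathrm{C}(G)}$ for each $j$, that is, to $(q^0_{ij})$ being a magic unitary (column orthogonality $q^0_{ij}q^0_{i'j}=0$ for $i\neq i'$ follows automatically from the column sums using the orthogonality of projections).

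For the direction $G$ Kac $\Rightarrow\varphi$ equivariant, Theorem \ref{thm:kmsequiv} applies and reduces the task to showing $\tau$ is $G$-equivariant. Here one appeals to the standard principle that any Kac-type compact quantum group action on a finite-dimensional commutative $\mathrm{C}^*$-algebra factors through (a quotient of) the quantum permutation group $S_n^+$, and hence preserves the uniform trace; see \cite{Joardar1}. Concretely, the argument uses that for Kac-type $G$ the antipode $\kappa$ is a $*$-anti-automorphism with $\kappa^2=\mathrm{id}$, combined with the antipode identity $\sum_k\kappa(q^0_{ik})q^0_{kj}=\delta_{ij}1$, to force the column relations on $(q^0_{ij})$. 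This gives magic unitarity and hence $G$-equivariance of $\tau$.

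For the direction $\varphi$ equivariant $\Rightarrow G$ Kac, first observe that the implication ``$\varphi$ $G$-equivariant $\Rightarrow\tau$ $G$-equivariant'' is the easy half of Theorem \ref{thm:kmsequiv} and requires only the intertwining $\omega\circ k_A=(k_A\otimes\mathrm{id}_{\mathrm{C}(G)})\circ\alpha$ of Theorem \ref{thm:maintheorem}, not the Kac hypothesis. Hence $(q^0_{ij})$ is a magic unitary, and the Hopf $*$-subalgebra of $\mathbb{C}[G]$ it generates is a quotient of $\mathbb{C}[S_n^+]$ and so Kac. Writing $\omega(s_e)=\sum_{e'}s_{e'}\otimes q^1_{e'e}$, applying $\omega$ to the Cuntz--Krieger relation $s_e^*s_f=\delta_{ef}p_{r(e)}$ together with the now-established column sums $\sum_{v}q^0_{v,r(e)}=1$ yields $\sum_{e'}(q^1_{e'e})^*q^1_{e'f}=\delta_{ef}1$; while applying $(\varphi\otimes\mathrm{id})$ to $\omega(s_es_f^*)$, together with the distinguished KMS value $\varphi(s_es_f^*)=\delta_{ef}/(n\rho(\cld))$ — a direct consequence of formula (\ref{eq:formula}) with $D=1$ and $\beta=\ln\rho(\cld)$ — yields $\sum_{e'}q^1_{e'e}(q^1_{e'f})^*=\delta_{ef}1$. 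These two identities together present $(q^1_{e'e})$ as a unitary corepresentation whose entry-wise conjugate is also unitary, i.e., as a Kac-type corepresentation. Faithfulness of $\omega$ guarantees that $\mathrm{C}(G)$ is generated, as a $\mathrm{C}^*$-algebra, by the matrix coefficients $\{q^0_{v',v}\}\cup\{q^1_{e',e}\}$; since both $Q^0$ and $Q^1$ are Kac-type corepresentations and the Kac-type matrix coefficients form a Hopf $*$-subalgebra closed under the structural operations, $G$ itself is of Kac type.

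The main obstacle is the forward-direction substep: rigorously showing that a Kac-type action on $\mathbb{C}^n$ is through a magic unitary. The underlying intuition is the expected ``classical-like'' rigidity of Kac quantum groups on commutative finite targets, but a self-contained antipode-level proof is slightly delicate (the natural guess $\kappa(q^0_{ij})=q^0_{ji}$ is itself equivalent to the conclusion), so it is cleaner to quote this from \cite{Joardar1}. The reverse direction is then largely a bookkeeping argument combining the Cuntz--Krieger algebra, the explicit form of $\varphi$ through formula (\ref{eq:formula}), and the faithfulness of $\omega$.
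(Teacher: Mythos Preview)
Your approach is essentially the paper's, but you have misidentified where the difficulty lies. In the forward direction the paper simply invokes Remarks~\ref{rem:kac}(2),(3) to feed Theorem~\ref{thm:kmsequiv}; the point is that $\tau$ is $G$-equivariant for \emph{every} compact quantum group acting on $\mathrm{C}(\mathcal{G}^0)\cong\mathbb{C}^n$, not just Kac ones. This is Wang's theorem: $\textup{S}_n^+$ is the universal object among compact quantum groups acting on $\mathrm{C}(X_n)$, so the matrix $(q^0_{ij})$ of any such action is automatically a magic unitary and hence preserves the uniform trace. Your ``main obstacle'' is therefore a non-obstacle, and your antipode discussion (correctly noting that $\kappa(q^0_{ij})=q^0_{ji}$ is circular) is beside the point. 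The Kac hypothesis enters only because Theorem~\ref{thm:kmsequiv} needs it.

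For the converse, the paper simply cites \cite{idaqp}*{Proposition~3.8}: from $\varphi$-equivariance one gets that $G$ is a quantum subgroup of $\textup{U}_{|\mathcal{G}^1|}^+$ (the $F$-matrix is the identity for the distinguished state), hence Kac. Your sketch unpacks exactly this. Two small gaps: the relations you derive give only $Q^*Q=I$ and $\overline{Q}^{\,*}\overline{Q}=I$, i.e.\ that $Q^1$ and $\overline{Q^1}$ are \emph{isometries}; to upgrade to unitaries, use that $Q^1$ is a finite-dimensional corepresentation (its entries lie in $\mathbb{C}[G]$ by the spectral-submodule argument) and hence has two-sided inverse $\kappa(Q^1)$, so a one-sided inverse is two-sided. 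Second, the passage ``$\mathrm{C}(G)$ is generated by coefficients of corepresentations with unitary conjugate $\Rightarrow$ $G$ Kac'' deserves a line: such corepresentations are closed under tensor product and subobjects, so every irreducible has unitary conjugate, which is one of the standard characterisations of Kac type. The paper sidesteps both points by citing the embedding into $\textup{U}_n^+$.
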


\begin{proof}
Let $G$ be compact quantum group of Kac type. Then by (2) and (3) of Remark \ref{rem:kac} above, all the conditions of Theorem \ref{thm:kmsequiv} are satisfied and the \textup{KMS} state $\varphi$ on $\mathrm{C}^*(\clg)$ is $G$-equivariant.

Conversely, we assume that the \textup{KMS} state $\varphi$ is $G$-equivariant. Then it follows from the proof of \cite{idaqp}*{Proposition 3.8} that $G$ is a quantum subgroup of $\textup{U}_{n}^{+}$ (observe that the matrix $F^{\clg}$ in \cite{idaqp}*{Proposition 3.8} is the identity matrix). Hence $G$ is a compact quantum group of Kac type.
\end{proof} 

\begin{corollary}
\label{cor:utranspose}
Let $G$ be a compact matrix pseudogroup such that the fundamental corepresentation $u$ is unitary. Then $G$ is of Kac type if and only if $u^{t}$ is also unitary.
\end{corollary}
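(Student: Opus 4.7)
My plan is to specialize Proposition \ref{prop:kac} to the bouquet graph $\clg$ on one vertex with $n$ loops, where $n$ is the size of the fundamental corepresentation $u$. The associated $\mathrm{C}^{\ast}$-correspondence (Example \ref{exa:graph}) is $(\mathbb{C}^n, \phi)$ over $\mathbb{C}$, with Pimsner algebra the Cuntz algebra $\mathcal{O}_n$, and the unitarity of $u$ promotes it to a $G$-equivariant correspondence via the coaction $\lambda(e_j) = \sum_i e_i \otimes u_{ij}$: the identity $u^{\ast}u = 1$ handles the inner-product invariance and $uu^{\ast} = 1$ handles the Podle{\'s} condition. Theorem \ref{thm:maintheorem} then yields a lifted $G$-action $\omega$ on $\mathcal{O}_n$ given on generators by $\omega(S_i) = \sum_j S_j \otimes u_{ji}$. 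Faithfulness of $\omega$ is immediate: $(\eta \otimes \mathrm{id})\omega(S_i)$ recovers the matrix coefficient $u_{ji}$ for a suitable bounded functional $\eta$ on $\mathcal{O}_n$ (available by linear independence of the $S_j$), and these coefficients generate $\mathrm{C}(G)$ by the compact matrix pseudogroup hypothesis.

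The bouquet graph is sourceless and sinkless with adjacency matrix $[n]$, so $\mathcal{O}_n$ admits a distinguished \textup{KMS}-$\ln n$ state $\varphi$, and Proposition \ref{prop:kac} reduces the corollary to showing that $\varphi$ is $G$-equivariant if and only if $u^t$ is unitary. I would match these conditions through a direct calculation with the Cuntz-word formula for $\varphi$:
\[
(\varphi \otimes \mathrm{id})\omega(S_i S_j^{\ast}) = n^{-1}\sum_k u_{ki}u_{kj}^{\ast},
\]
whose target value is $n^{-1}\delta_{ij}\, 1$, so equivariance already on length-one words forces $u^t(u^t)^{\ast} = 1$. Conversely, once this identity holds, a telescoping argument that collapses the innermost creation-annihilation sum in any balanced Cuntz word to the scalar $\delta_{i_m j_m}$ and propagates outward recovers equivariance on all balanced words, while unbalanced words vanish on both sides for length reasons. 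Density then yields full $G$-equivariance of $\varphi$.

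To finish, I would upgrade co-isometry to unitarity by invoking the compact matrix pseudogroup axioms: these force $\bar u = (u^t)^{\ast}$ to be invertible, whence $u^t$ itself is invertible, and in a unital $\mathrm{C}^{\ast}$-algebra an invertible co-isometry is automatically unitary because its adjoint, being a right inverse, must coincide with the unique two-sided inverse. Combined with Proposition \ref{prop:kac}, this delivers the stated equivalence. The main technical obstacle is the telescoping reduction in the second paragraph---one has to verify that the scalar produced by the innermost sum really can be pulled out of the nested non-commutative product of matrix coefficients---but once the $m = 1$ identity is in hand, the higher-$m$ identities reduce to it by induction and the rest of the argument is essentially formal.
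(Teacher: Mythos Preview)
Your proof is correct and follows essentially the same route as the paper: specialize Proposition~\ref{prop:kac} to the bouquet graph on one vertex with $n$ loops, build the $G$-equivariant correspondence from the unitary $u$, lift to the action $\omega$ on $\mathcal{O}_n$, and reduce the Kac condition to $G$-equivariance of the unique (distinguished) \textup{KMS} state. The only substantive difference is that the paper outsources the equivalence ``$\varphi$ is $G$-equivariant $\Leftrightarrow$ $u^t$ is unitary'' to \cite{Joardar1}, whereas you supply a direct argument: the length-one computation gives $u^t(u^t)^*=1$, the telescoping collapse handles higher balanced words (this does work, since the innermost sum $\sum_{k_m}u_{k_m i_m}u_{k_m j_m}^*$ is a scalar that factors out of the surrounding noncommutative product), and Woronowicz's axiom that $\bar u$ be invertible promotes the co-isometry $u^t$ to a unitary. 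Your version is more self-contained; the paper's is shorter by citation.
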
  

\begin{proof}
We consider the $\mathrm{C}^*$-correspondence coming from the graph of the Cuntz algebra with $n$-generators. Then since the graph has only one vertex, $\mathrm{C}(\mathcal{G}^0)=\mathbb{C}$ and hence the Hilbert $\mathrm{C}^*$-module $\mathrm{C}(\mathcal{G}^{1})$ is an $n$-dimensional Hilbert space. So if $(u_{ij})_{i,j=1,\dots,n}$ is the unitary matrix corresponding to the fundamental unitary corepresentation $u$, denoting an orthonormal basis of $\mathrm{C}(\mathcal{G}^{1})$ by $e_{1},\dots,e_{n}$, we get a $G$-action on $\mathrm{C}(\mathcal{G}^1)$ by
\[
\lambda(e_{j})=\sum_{i=1}^{n}e_{i}\ot u_{ij}, \quad j=1,\dots,n.
\]
The $G$-action $\alpha$ on $\mathrm{C}(\mathcal{G}^{0})=\mathbb{C}$ is the trivial one and it is easy to see that the above $\alpha$, and $\lambda$ make the $\mathrm{C}^*$-correspondence $(\mathrm{C}(\mathcal{G}^1),\phi)$ a $G$-equivariant $\mathrm{C}^*$-correspondence. Thus there is a $G$-action $\omega$ on $\clo_{n}$ given on the generators by 
\[
\omega(S_{j})=\sum_{i=1}^{n}S_{i} \ot u_{ij}, \quad j=1,\dots,n.
\]
But the unique \textup{KMS} state on $\clo_{n}$ is $G$-equivariant if and only if $u^{t}=((u_{ji}))$ is unitary (see \cite{Joardar1}). Now the unique \textup{KMS} state is a distinguished \textup{KMS} state in the above sense. So applying Proposition \ref{prop:kac}, we obtain the desired conclusion.
\end{proof} 

\begin{remark}
The conclusion of the Corollary \ref{cor:utranspose} is well-known as can be easily seen by applying $\kappa$ to $u^*u=uu^*$ and using the fact that for a compact quantum group of Kac type, $\kappa$ is involutive. But our proof is from the point of view of quantum symmetries. 
\end{remark} 

\section{Applications to quantum symmetries of graphs II: The case of simple graphs}
\label{sec:applicationstwo}
Let $A$ be a unital $\mathrm{C}^*$-algebra and $(E,\phi)$ be a $\mathrm{C}^*$-correspondence over $A$. We consider the category of all compact quantum groups $G$ such that $(E,\phi)$ is a $G$-equivariant $\mathrm{C}^*$-correspondence. Although this category is an interesting one, it has a drawback that in general it might fail to admit a universal object. For example, if one considers the $\mathrm{C}^*$-correspondence coming from the graph of the Cuntz algebra with $n$-generators, any Wang algebra $A_{u}(Q)$ can be made into an object of this category and therefore it does not admit a universal object. In this section, we shall consider the $\mathrm{C}^*$-correspondence coming from a finite graph, possibly with multiple edges but without any source, and make the category more restrictive so that the modified category admits a  universal object. Therefore let us begin with the following definition.

\begin{definition}
\label{def:qsymban}
Let $\mathcal{G}$ be a finite, directed graph, with $n$ edges and $m$ vertices (without loops or multiple edges). The compact quantum group $\mathrm{Aut}^{+}_{\textup{Ban}}(\clg)$ is defined to be the quotient $\textup{S}^+_m/(UD-DU)$, where $U=(q_{vw})_{v,w \in \mathcal{G}^{0}}$, and $D$ is the adjacency matrix for $\clg$. The coproduct on the generators is given by $\Delta_{\mathrm{Aut}^{+}_{\textup{Ban}}(\clg)}(q_{vw})=\sum_{u\in \mathcal{G}^{0}}q_{vu} \ot q_{uw}$.
\end{definition}

The relation $UD=DU$, when expanded out, yields the following explicit
description of the $\mathrm{C}^{*}$-algebra
$\mathrm{C}(\mathrm{Aut}^{+}_{\textup{Ban}}(\clg))$. 
	
\begin{lemma} 
\label{thm:genrel}\cite{Fulton}*{Lemma 3.1.1} 
The underlying $\mathrm{C}^*$-algebra $\mathrm{C}(\mathrm{Aut}^{+}_{\textup{Ban}}(\clg))$ of the quantum group $\mathrm{Aut}^{+}_{\textup{Ban}}(\clg)$ for a finite, directed graph $\clg$ with $n$ edges and $m$ vertices (without loops or multiple edges) is the universal $\mathrm{C}^*$-algebra generated by $(q_{vw})_{v,w\in \mathcal{G}^{0}}$ satisfying the following relations.
\begin{align}
q_{vw}^*=q_{vw}, \ q_{vw}q_{vu}=\delta_{wu}q_{vw}, \ q_{vw}q_{uw}=\delta_{vu}q_{vw}, \quad u,v,w \in \mathcal{G}^0,\label{eq:r1}
\end{align} 
\begin{align}
\sum_{w \in \mathcal{G}^{0}}q_{vw}=\sum_{w \in \mathcal{G}^{0}}q_{wv}=1, \quad v \in \mathcal{G}^{0},\label{eq:r2}
\end{align}
\begin{align}
\begin{aligned}
&{}q_{s(e)v}q_{r(e)w}=q_{r(e)w}q_{s(e)v}=0, \quad e \in \mathcal{G}^{1}, (v,w) \not \in \mathcal{G}^{1},\\
&{}q_{vs(e)}q_{wr(e)}=q_{wr(e)}q_{vs(e)}=0, \quad e \in \mathcal{G}^{1}, (v,w) \not \in \mathcal{G}^{1}.
\end{aligned}\label{eq:r3}
\end{align}
The comultiplication on the generators is given by 
\begin{align}
\Delta_{\mathrm{Aut}^{+}_{\textup{Ban}}(\clg)}(q_{vw})=\sum_{u\in \mathcal{G}^{0}}q_{vu} \ot q_{uw}.\label{eq:del1}
\end{align} The action on the graph is given by \[\alpha(p_{v})=\sum_{w\in \mathcal{G}^{0}}p_{w} \ot q_{wv}, \ v \in \mathcal{G}^{0}.\]
\end{lemma}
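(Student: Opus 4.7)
The plan is to exploit Definition \ref{def:qsymban}, which realizes $\mathrm{C}(\mathrm{Aut}^{+}_{\textup{Ban}}(\clg))$ as the quotient of $\mathrm{C}(\textup{S}^{+}_{m})$ by the matrix relation $UD - DU = 0$. Since $\mathrm{C}(\textup{S}^{+}_{m})$ is itself the universal $\mathrm{C}^*$-algebra for a magic unitary of size $m$, it is already presented by \eqref{eq:r1} and \eqref{eq:r2} (from the example in Section \ref{sec:cqg}). Hence everything in the statement except the equivalence of $UD = DU$ with \eqref{eq:r3} is automatic: the coproduct formula \eqref{eq:del1} and the action $\alpha$ are simply the restrictions to the quotient of the corresponding structures on $\textup{S}^{+}_{m}$ and its canonical action on $\mathrm{C}(\mathcal{G}^{0}) \cong \mathrm{C}(X_{m})$. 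The real task is to show that, in the presence of \eqref{eq:r1} and \eqref{eq:r2}, the matrix identity $UD = DU$ is equivalent to the family \eqref{eq:r3}.

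Unpacking the $(v,w)$-entry of $UD = DU$ and using that $D$ is a $\{0,1\}$-matrix (the no-multiple-edges and no-loops hypotheses ensure the passage from edge-indexed to vertex-indexed sums is unambiguous), I would rewrite the commutation as the system
\[
\sum_{u:(u,w)\in\mathcal{G}^{1}} q_{vu} \;=\; \sum_{u:(v,u)\in\mathcal{G}^{1}} q_{uw}, \qquad v,w \in \mathcal{G}^{0},
\]
which I will call the commutation identity at $(v,w)$. For \eqref{eq:r3} $\Rightarrow$ $UD = DU$, I would insert $1 = \sum_{y} q_{yw}$ on the left side and appeal to the second line of \eqref{eq:r3} (writing $u = s(e), w = r(e)$ for the unique edge $e$ with that source and range) to discard all terms with $(v,y) \notin \mathcal{G}^{1}$; symmetrically, inserting $1 = \sum_{y} q_{vy}$ on the right side and invoking the first line of \eqref{eq:r3} discards all terms with $(y,w) \notin \mathcal{G}^{1}$. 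A relabeling of dummy indices then shows that both sides have been transformed into the same double sum over pairs $(y,u)$ with $(v,y), (u,w) \in \mathcal{G}^{1}$.

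For the converse, I would fix $v, w$ and an auxiliary vertex $x$ with $(x,w) \notin \mathcal{G}^{1}$, and multiply the commutation identity on the left by $q_{vx}$: the row orthogonality in \eqref{eq:r1} collapses the left side to zero, leaving $\sum_{u:(v,u)\in\mathcal{G}^{1}} q_{vx} q_{uw} = 0$. Multiplying this further on the right by $q_{u_{0}w}$ for any fixed $u_{0}$ with $(v, u_{0}) \in \mathcal{G}^{1}$, the column orthogonality in \eqref{eq:r1} isolates the single term $q_{vx} q_{u_{0}w} = 0$, which is precisely the first line of \eqref{eq:r3} after writing $s(e) = v, r(e) = u_{0}$. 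The second line of \eqref{eq:r3} comes from the mirror procedure: multiply the commutation identity on the right by $q_{yw}$ with $(v,y) \notin \mathcal{G}^{1}$ (which kills the right side), and then on the left by $q_{v u'_{0}}$ for fixed $u'_{0}$ with $(u'_{0}, w) \in \mathcal{G}^{1}$.

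No step is deep; the only subtlety is clerical, namely choosing which side to multiply on and which of the row/column orthogonalities from \eqref{eq:r1} to invoke so that every sum collapses to a single term. The no-multiple-edges hypothesis is used precisely in this bookkeeping: it supplies a bijection between edges of $\clg$ and ordered pairs in $\mathcal{G}^{1}$, so that sums like $\sum_{e:r(e)=w} q_{v,s(e)}$ and $\sum_{u:(u,w)\in\mathcal{G}^{1}} q_{vu}$ are literally identical, making the passage between the matrix relation $UD = DU$ and the scalar relations in \eqref{eq:r3} a mechanical exercise in magic-unitary algebra.
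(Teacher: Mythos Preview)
The paper does not prove this lemma; it is quoted from \cite{Fulton} and stated without argument. Your proposal is correct and supplies exactly the computation the paper omits: relations \eqref{eq:r1}--\eqref{eq:r2} are the magic-unitary relations defining $\mathrm{C}(\textup{S}^{+}_{m})$, and the equivalence of $UD=DU$ with \eqref{eq:r3} is the only content. Your two-way argument (inserting $1=\sum q$ and collapsing via row/column orthogonality) is the standard one and is sound.

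One clerical point: each line of \eqref{eq:r3} asserts that \emph{both} orderings of the product vanish, whereas your left-multiply-then-right-multiply procedure yields only one ordering per line (e.g.\ $q_{s(e)x}q_{r(e)w}=0$ but not yet $q_{r(e)w}q_{s(e)x}=0$). The missing ordering is immediate by taking adjoints, since the $q_{vw}$ are self-adjoint by \eqref{eq:r1}; you should say so explicitly.
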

	
\begin{remark}
\label{rem:transpose}
Since $\mathrm{Aut}^{+}_{\textup{Ban}}(\clg)$ is a quantum subgroup of $\textup{S}_{m}^{+}$, it is of Kac type.
\end{remark}
	
\begin{remark}
\label{rem:bichon}
There is another notion of quantum symmetry for directed, simple graphs due to
Bichon. We refer the reader to \cite{Bichon} for details. The quantum
automorphism group of a simple, directed graph $\clg$ in the sense of Bichon, to
be denoted by $\mathrm{Aut}^{+}_{\textup{Bic}}(\clg)$, is a quantum subgroup of $\mathrm{Aut}^{+}_{\textup{Ban}}(\clg)$. The underlying $\mathrm{C}^*$-algebra $\mathrm{C}(\mathrm{Aut}^{+}_{\textup{Bic}}(\clg))$ is again generated by $q_{vw}$, $v,w\in \mathcal{G}^{0}$ and satisfy the relations in Theorem \ref{thm:genrel}, as well as the following additional relations.
\begin{align}
	q_{s(e)s(f)}q_{r(e)r(f)}=q_{r(e)r(f)}q_{s(e)s(f)}, \quad e,f\in \mathcal{G}^{1}.\label{eq:r4}
\end{align}
The comultiplication on the generators is again given by 
\begin{align}
	\Delta_{\mathrm{Aut}^{+}_{\textup{Bic}}(\clg)}(q_{vw})=\sum_{u\in \mathcal{G}^0}q_{vu}\ot q_{uw}.\label{eq:del2}
\end{align}	
\end{remark}

\begin{definition}
\label{def:Bancat} 
Let $\mathcal{G}=(\mathcal{G}^1,\mathcal{G}^0,r,s)$ be a finite, directed graph, without loops or multiple edges. We define the category $\clc_{\textup{Ban}}(\clg)$ as follows. 
\begin{itemize}
	\item An object of $\clc_{\textup{Ban}}(\clg)$ is a triple $(G,\alpha,\lambda)$, where $G$ is a compact quantum group, $\alpha : \mathrm{C}(\mathcal{G}^0) \rightarrow \mathrm{C}(\mathcal{G}^0) \otimes \mathrm{C}(G)$ is a unital $*$-homomorphism and $\lambda : \mathrm{C}(\mathcal{G}^1) \rightarrow \mathrm{C}(\mathcal{G}^1) \otimes \mathrm{C}(G)$ is a linear map satisfying the following conditions.
	\begin{itemize}
		\item The pair $(\mathrm{C}(\mathcal{G}^0),\alpha)$ is a $G$-$\mathrm{C}^*$-algebra and the action $\alpha$ is faithful;
		\item the triple $(\mathrm{C}(\mathcal{G}^1),\phi,\lambda)$ is a $G$-equivariant $\mathrm{C}^*$-correspondence over the $G$-$\mathrm{C}^*$-algebra $(\mathrm{C}(\mathcal{G}^0),\alpha)$;
		\item for all $f \in \mathrm{C}(\mathcal{G}^{0})$, $(r_{\ast}\ot \mathrm{id}_{\mathrm{C}(G)})\alpha(f)=\lambda(r_{\ast}(f))$, where $r_{\ast} :\mathrm{C}(\mathcal{G}^{0}) \raro \mathrm{C}(\mathcal{G}^{1})$ is the $*$-homomorphism given by $r_{\ast}(f)(e)=f(r(e))$ for $e\in \mathcal{G}^{1}$.
	\end{itemize}
	\item Let $(G_1,\alpha_1,\lambda_1)$ and $(G_2,\alpha_2,\lambda_2)$ be two objects of the category $\clc_{\textup{Ban}}(\clg)$. A morphism $f : (G_1,\alpha_1,\lambda_1) \rightarrow (G_2,\alpha_2,\lambda_2)$ in $\clc_{\textup{Ban}}(\clg)$ is by definition a Hopf $*$-homomorphism $f : \mathrm{C}(G_2) \rightarrow \mathrm{C}(G_1)$ such that 
	\begin{itemize}
		\item $(\mathrm{id}_{\mathrm{C}(\mathcal{G}^0)} \otimes f)\alpha_2=\alpha_1$;
		\item $(\mathrm{id}_{\mathrm{C}(\mathcal{G}^1)} \otimes f)\lambda_2=\lambda_1$.
	\end{itemize}
\end{itemize}
\end{definition}

\begin{definition}
\label{def:Bichcat} 
Let $\mathcal{G}=(\mathcal{G}^1,\mathcal{G}^0,r,s)$ be a finite, directed graph, without loops or multiple edges. We define the category $\clc_{\textup{Bic}}(\clg)$ as follows. 
\begin{itemize}
	\item An object of $\clc_{\textup{Bic}}(\clg)$ is a triple $(G,\alpha,\lambda)$, where $G$ is a compact quantum group, $\alpha : \mathrm{C}(\mathcal{G}^0) \rightarrow \mathrm{C}(\mathcal{G}^0) \otimes \mathrm{C}(G)$ is a unital $*$-homomorphism and $\lambda : \mathrm{C}(\mathcal{G}^1) \rightarrow \mathrm{C}(\mathcal{G}^1) \otimes \mathrm{C}(G)$ is a linear map satisfying the following conditions.
	\begin{itemize}
		\item The pair $(\mathrm{C}(\mathcal{G}^0),\alpha)$ is a $G$-$\mathrm{C}^*$-algebra and the action $\alpha$ is faithful;
		\item the triple $(\mathrm{C}(\mathcal{G}^1),\phi,\lambda)$ is a $G$-equivariant $\mathrm{C}^*$-correspondence over the $G$-$\mathrm{C}^*$-algebra $(\mathrm{C}(\mathcal{G}^0),\alpha)$;
		\item the pair $(\mathrm{C}(\mathcal{G}^1),\lambda)$ is a $G$-$\mathrm{C}^*$-algebra.
	\end{itemize}
	\item Let $(G_1,\alpha_1,\lambda_1)$ and $(G_2,\alpha_2,\lambda_2)$ be two objects of the category $\clc_{\textup{Bic}}(\clg)$. A morphism $f : (G_1,\alpha_1,\lambda_1) \rightarrow (G_2,\alpha_2,\lambda_2)$ in $\clc_{\textup{Bic}}(\clg)$ is again by definition a Hopf $*$-homomorphism $f : \mathrm{C}(G_2) \rightarrow \mathrm{C}(G_1)$ such that 
	\begin{itemize}
		\item $(\mathrm{id}_{\mathrm{C}(\mathcal{G}^0)} \otimes f)\alpha_2=\alpha_1$;
		\item $(\mathrm{id}_{\mathrm{C}(\mathcal{G}^1)} \otimes f)\lambda_2=\lambda_1$.
	\end{itemize}
\end{itemize}
\end{definition}

% \begin{definition}
% \label{Bichcat}
% Let $\mathcal{C}_{\textup{Bic}}(\clg)$ be the category of compact quantum group's $G=(\mathrm{C}(G),\Delta)$ such that $G$ is a quantum symmetry of the $\mathrm{C}^*$-correspondence coming from $\clg$ with the corepresentation and coactions being $\beta,\alpha$ respectively so that $\beta$ is also a faithful coaction on $\mathrm{C}(\mathcal{G}^1)$.
% \end{definition}   

The notations used for the above two categories are justified by the following theorems.

\begin{theorem}
\label{thm:Bancorrespondence}
A universal object $(G_{\textup{Ban}},\alpha_{\textup{Ban}},\lambda_{\textup{Ban}})$ in the category $\clc_{\textup{Ban}}(\clg)$ exists. Moreover, $G_{\textup{Ban}}$ is isomorphic to $\mathrm{Aut}^{+}_{\textup{Ban}}(\clg)$, the quantum automorphism group of $\clg$ in the sense of Banica.
\end{theorem}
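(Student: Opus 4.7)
The plan is to verify the universal property for $(\mathrm{Aut}^{+}_{\textup{Ban}}(\clg), \alpha_{\textup{Ban}}, \lambda_{\textup{Ban}})$, where $\alpha_{\textup{Ban}}$ is the permutation action on vertices from Lemma \ref{thm:genrel} and $\lambda_{\textup{Ban}}$ is defined on edges by
\[
\lambda_{\textup{Ban}}(\delta_e) \;=\; \sum_{f \in \mathcal{G}^{1}} \delta_f \otimes q_{s(f)s(e)}\, q_{r(f)r(e)}, \qquad e \in \mathcal{G}^{1}.
\]
The first step is to check that this triple lies in $\clc_{\textup{Ban}}(\clg)$: the axioms of an equivariant $\mathrm{C}^*$-correspondence together with the $r_\ast$-compatibility all reduce to routine manipulations with the defining relations \eqref{eq:r1}--\eqref{eq:r3}. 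For example, in verifying coassociativity of $\lambda_{\textup{Ban}}$, a double sum over arbitrary pairs of vertices $(u, u')$ collapses to a sum over actual edges because \eqref{eq:r3} forces $q_{s(f)u}\, q_{r(f)u'} = 0$ whenever $(u, u') \notin \mathcal{G}^{1}$.

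For universality, fix $(G, \alpha, \lambda) \in \clc_{\textup{Ban}}(\clg)$ and write $\alpha(\delta_v) = \sum_w \delta_w \otimes a_{wv}$, $\lambda(\delta_e) = \sum_f \delta_f \otimes c_{fe}$. Faithfulness of the compact quantum group action on the finite commutative algebra $\mathrm{C}(\mathcal{G}^{0})$ forces, by a standard argument, the matrix $(a_{wv})$ to be a magic unitary, establishing \eqref{eq:r1} and \eqref{eq:r2} and realizing $G$ as a quantum subgroup of $\textup{S}_{m}^{+}$; in particular $G$ is of Kac type and its antipode $\kappa$ satisfies $\kappa(a_{wv}) = a_{vw}$. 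The left-action axiom applied to $\phi(\delta_v)\delta_e = \delta_{v,s(e)}\delta_e$ and the right-module axiom applied to $\delta_e \cdot \delta_w = \delta_{w,r(e)}\delta_e$ yield, respectively,
\[
a_{s(f), v}\, c_{fe} \;=\; \delta_{v, s(e)}\, c_{fe}, \qquad c_{fe}\, a_{r(f), w} \;=\; \delta_{w, r(e)}\, c_{fe},
\]
while the $r_\ast$-compatibility $(r_{\ast} \otimes \mathrm{id})\alpha = \lambda \circ r_{\ast}$ unpacks to $a_{r(f), w} = \sum_{e' : r(e') = w} c_{fe'}$.

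Combining these gives the key identity
\[
a_{s(f), v}\, a_{r(f), w} \;=\; \sum_{e' : r(e') = w} a_{s(f), v}\, c_{fe'} \;=\; \sum_{e' : s(e') = v,\, r(e') = w} c_{fe'}.
\]
When $(v, w) \notin \mathcal{G}^{1}$ the sum is empty, so $a_{s(f), v}\, a_{r(f), w} = 0$, which (with $f$ arbitrary) gives the first line of \eqref{eq:r3}; taking instead $(v, w) = (s(e), r(e))$ and invoking simplicity of the graph (so that $e' = e$ is the unique contributor) yields the explicit formula $c_{fe} = a_{s(f)s(e)}\, a_{r(f)r(e)}$. The second line of \eqref{eq:r3} is deduced by applying the antipode $\kappa$ to the vanishing $a_{s(e), v}\, a_{r(e), w} = 0$: antimultiplicativity and $\kappa(a_{wv}) = a_{vw}$ produce $a_{w, r(e)}\, a_{v, s(e)} = 0$, and self-adjointness of these projections then also gives $a_{v, s(e)}\, a_{w, r(e)} = 0$.

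Since the $a_{vw}$ now satisfy every defining relation of $\mathrm{C}(\mathrm{Aut}^{+}_{\textup{Ban}}(\clg))$, universality produces a unique unital $\ast$-homomorphism $\pi : \mathrm{C}(\mathrm{Aut}^{+}_{\textup{Ban}}(\clg)) \to \mathrm{C}(G)$ with $\pi(q_{vw}) = a_{vw}$; coassociativity of $\alpha$ gives $\Delta_G(a_{vw}) = \sum_u a_{vu} \otimes a_{uw}$, so $\pi$ is a Hopf $\ast$-homomorphism. The compatibility $(\mathrm{id} \otimes \pi) \alpha_{\textup{Ban}} = \alpha$ is by construction, while $(\mathrm{id} \otimes \pi) \lambda_{\textup{Ban}} = \lambda$ follows from the formula $c_{fe} = a_{s(f)s(e)}\, a_{r(f)r(e)}$; uniqueness of $\pi$ is immediate since the $q_{vw}$ generate the algebra. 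The main technical obstacle is precisely the derivation of \eqref{eq:r3}, embodied in the key identity $a_{s(f) v}\, a_{r(f) w} = \sum_{e' : (s(e'), r(e')) = (v, w)} c_{fe'}$, which requires carefully orchestrating the Hilbert-module equivariance data against the $r_\ast$-compatibility; the simple-graph hypothesis enters only at the final step of pinning down $c_{fe}$ uniquely in terms of the $a_{vw}$.
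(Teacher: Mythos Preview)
Your proposal is correct and follows essentially the same approach as the paper: define $\lambda_{\textup{Ban}}$ by the same formula, verify the object axioms from the relations \eqref{eq:r1}--\eqref{eq:r3}, and for universality use the $r_\ast$-compatibility together with the bimodule equivariance to express the $\lambda$-coefficients as $c_{fe}=a_{s(f)s(e)}a_{r(f)r(e)}$ and then check that the $a_{vw}$ satisfy the Banica relations. The only tactical differences are that the paper derives $c_{fe}$ in one step via the identity $\phi(\delta_{s(e)})r_\ast(\delta_{r(e)})=\delta_e$ and then outsources the verification of \eqref{eq:r3} to \cite{Web} (using the inner-product condition), whereas you unpack the left-, right- and $r_\ast$-compatibilities separately and obtain the transposed half of \eqref{eq:r3} via the antipode; both routes are equivalent, and your argument is in fact a bit more self-contained. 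One small quibble: the magic-unitary property of $(a_{wv})$ comes from $\alpha$ being a $G$-action on $\mathrm{C}(\mathcal{G}^0)$, not from faithfulness; faithfulness is what lets you conclude $G$ is a genuine quantum subgroup of $\textup{S}_m^+$ (and hence Kac), though the antipode formula $\kappa(a_{wv})=a_{vw}$ already follows from $(a_{wv})$ being a magic unitary without invoking Kac-ness.
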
  

\begin{theorem}
\label{thm:Biccorrespondence}
A universal object $(G_{\textup{Bic}},\alpha_{\textup{Bic}},\lambda_{\textup{Bic}})$ in the category $\clc_{\textup{Bic}}(\clg)$ exists. Moreover, $G_{\textup{Bic}}$ is isomorphic to $\mathrm{Aut}^{+}_{\textup{Bic}}(\clg)$, the quantum automorphism group of $\clg$ in the sense of Bichon.
\end{theorem}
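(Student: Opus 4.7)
The plan is to construct the universal object explicitly as $(\mathrm{Aut}^{+}_{\textup{Bic}}(\clg), \alpha_{\textup{Bic}}, \lambda_{\textup{Bic}})$, where $\alpha_{\textup{Bic}}$ is the permutation action from Lemma \ref{thm:genrel} and $\lambda_{\textup{Bic}}$ is defined on the generators $\delta_{e}$ of $\mathrm{C}(\mathcal{G}^{1})$ by $\lambda_{\textup{Bic}}(\delta_{e}) = \sum_{f \in \mathcal{G}^{1}} \delta_{f} \otimes q_{s(f)s(e)} q_{r(f)r(e)}$. A useful preliminary observation is that every object of $\clc_{\textup{Bic}}(\clg)$ is automatically an object of $\clc_{\textup{Ban}}(\clg)$: evaluating the module compatibility $\lambda(\xi \cdot f) = \lambda(\xi)\alpha(f)$ at $\xi = 1_{\mathrm{C}(\mathcal{G}^{1})}$ and using $1 \cdot f = r_{\ast}(f)$ together with the unitality of $\lambda$ (which holds because $\lambda$ is a $*$-homomorphism in the Bichon setting) directly yields the $r_{\ast}$-equivariance condition of Definition \ref{def:Bancat}. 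Thanks to this, Theorem \ref{thm:Bancorrespondence} is available in both halves of the argument.

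For existence, the correspondence conditions on $(\mathrm{C}(\mathcal{G}^{1}), \phi, \lambda_{\textup{Bic}})$ follow by the same computations as in Theorem \ref{thm:Bancorrespondence}, since they only use relations \eqref{eq:r1}--\eqref{eq:r3}. The new task is to verify that $\lambda_{\textup{Bic}}$ is a unital $*$-homomorphism, so that $(\mathrm{C}(\mathcal{G}^{1}), \lambda_{\textup{Bic}})$ becomes an $\mathrm{Aut}^{+}_{\textup{Bic}}(\clg)$-$\mathrm{C}^{*}$-algebra. By the universal property of $\mathrm{C}(\mathcal{G}^{1})$, it suffices to check that the family $\{\lambda_{\textup{Bic}}(\delta_{e})\}_{e \in \mathcal{G}^{1}}$ consists of pairwise orthogonal self-adjoint projections summing to $1 \otimes 1$. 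Each check uses \eqref{eq:r4} essentially: self-adjointness is a direct reformulation of \eqref{eq:r4}; idempotency and pairwise orthogonality rely on \eqref{eq:r4} to commute inner factors and then reduce to \eqref{eq:r1}; the sum condition uses \eqref{eq:r3} to extend the edge-sum to a sum over all pairs $(u,v) \in \mathcal{G}^{0} \times \mathcal{G}^{0}$, which factors as $(\sum_{u} q_{s(f)u})(\sum_{v} q_{r(f)v}) = 1$. Coassociativity and the Podle\'s condition pass down from the Banica case.

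For universality, let $(G, \alpha, \lambda) \in \clc_{\textup{Bic}}(\clg)$. By the preliminary observation, Theorem \ref{thm:Bancorrespondence} yields a unique Hopf $*$-homomorphism $\Phi : \mathrm{C}(\mathrm{Aut}^{+}_{\textup{Ban}}(\clg)) \to \mathrm{C}(G)$ intertwining $\alpha_{\textup{Ban}}$ with $\alpha$ and $\lambda_{\textup{Ban}}$ with $\lambda$; in particular, $\lambda(\delta_{e}) = \sum_{f} \delta_{f} \otimes q^{G}_{s(f)s(e)} q^{G}_{r(f)r(e)}$, where $q^{G}_{vw} = \Phi(q_{vw})$. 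The substantive step is to show that $\Phi$ factors through the Bichon quotient, i.e., that \eqref{eq:r4} holds for the $q^{G}_{vw}$. This is forced by the multiplicativity of $\lambda$: since $\delta_{e} = \delta_{e}^{\ast}$, one has $\lambda(\delta_{e}) = \lambda(\delta_{e})^{\ast} = \sum_{f} \delta_{f} \otimes q^{G}_{r(f)r(e)} q^{G}_{s(f)s(e)}$, and comparing coefficients of $\delta_{f}$ produces \eqref{eq:r4}. The induced map is then a morphism in $\clc_{\textup{Bic}}(\clg)$, and uniqueness is immediate since it is prescribed on the generators.

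The main conceptual step will be this last observation: the Bichon relation \eqref{eq:r4} is precisely the condition that makes $\lambda_{\textup{Ban}}(\delta_{e})$ self-adjoint, and hence governs both the multiplicativity of $\lambda_{\textup{Bic}}$ in the existence part and the extraction of \eqref{eq:r4} from the multiplicativity of $\lambda$ in the universality part. Once this identification is made, the remaining work reduces to algebraic bookkeeping with magic-unitary-type identities already handled in Theorem \ref{thm:Bancorrespondence}.
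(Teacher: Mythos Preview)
Your proof is correct and follows essentially the same route as the paper: the paper too reduces existence to the Banica case (noting that $\mathrm{Aut}^{+}_{\textup{Bic}}(\clg)$ is a quantum subgroup of $\mathrm{Aut}^{+}_{\textup{Ban}}(\clg)$, so the correspondence conditions descend) and handles universality by first invoking Lemma~\ref{lem:Ban_Bic} (whose forward direction is exactly your preliminary observation) to land in $\clc_{\textup{Ban}}(\clg)$, then extracting relation~\eqref{eq:r4} from the fact that $\lambda$ is a $*$-homomorphism. Your write-up is a bit more explicit in two places---you spell out why $\lambda_{\textup{Bic}}$ is a $*$-homomorphism rather than deferring to Bichon's original definition, and you pinpoint that \eqref{eq:r4} arises precisely from self-adjointness of $\lambda(\delta_e)$---but these are elaborations of the same argument, not a different one.
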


\begin{proof}[Proof of Theorem \ref{thm:Bancorrespondence}]
We have to show first that $\mathrm{Aut}^{+}_{\textup{Ban}}(\clg)$ can be made into an object of the category $\clc_{\textup{Ban}}(\clg)$. We proceed to do so now. As in Theorem \ref{thm:genrel}, we denote the generators of the underlying $\mathrm{C}^*$-algebra $\mathrm{C}(\mathrm{Aut}^{+}_{\textup{Ban}}(\clg))$ of $\mathrm{Aut}^{+}_{\textup{Ban}}(\clg)$ by $\{q_{wv}\}_{v,w \in \mathcal{G}^0}$. Now we define $\lambda_{\textup{Ban}}$ and $\alpha_{\textup{Ban}}$ as
\begin{align}
\lambda_{\textup{Ban}}(\delta_{e})=\sum_{f\in \mathcal{G}^1}\delta_{f}\ot q_{s(f)s(e)}q_{r(f)r(e)}, \quad e \in \mathcal{G}^1,\label{eq:lambda}
\end{align}
\begin{align}
\alpha_{\textup{Ban}}(\delta_{v})=\sum_{w\in \mathcal{G}^0}\delta_{w}\ot q_{wv}, \quad v \in \mathcal{G}^0.\label{eq:alpha}
\end{align}The relations \eqref{eq:r1},\eqref{eq:r2} and \eqref{eq:r3} imply that $\alpha_{\textup{Ban}}$ is a unital $*$-homomorphism. The formula \eqref{eq:del1} for the comultiplication shows coassociativity of both $\alpha_{\textup{Ban}}$ and $\lambda_{\textup{Ban}}$. The Podle\'s conditions for $\alpha_{\textup{Ban}}$ and $\lambda_{\textup{Ban}}$ follow from the facts that $\mathrm{C}(\mathcal{G}^0)$ and $\mathrm{C}(\mathcal{G}^1)$ are finite dimensional, respectively. Furthermore, the fact that $\{q_{vw}\}_{v,w\in \mathcal{G}^0}$ are generators of $\mathrm{C}(\mathrm{Aut}^{+}_{\textup{Ban}}(\clg))$ implies that $\alpha_{\textup{Ban}}$ is a faithful $G$-action on $\mathrm{C}(\mathcal{G}^0)$. Therefore, to show that $(\mathrm{Aut}^{+}_{\textup{Ban}}(\clg),\alpha_{\textup{Ban}},\lambda_{\textup{Ban}})$ is an object of the category $\clc_{\textup{Ban}}(\clg)$, it suffices to show
\begin{align}
\lambda_{\textup{Ban}}(\phi(\delta_{v})\delta_{e})=(\phi \otimes \mathrm{id}_{\mathrm{C}(G)})\alpha_{\textup{Ban}}(\delta_{v})\lambda_{\textup{Ban}}(\delta_{e}), \quad v \in \mathcal{G}^0, e \in \mathcal{G}^1,\label{eq:e}
\end{align}
\begin{align}
\lambda_{\textup{Ban}}(\delta_{e}\delta_{v})=\lambda_{\textup{Ban}}(\delta_{e})\alpha(\delta_{v}), \quad v \in \mathcal{G}^0, e \in \mathcal{G}^1,\label{eq:f}
\end{align} 
\begin{align}
\langle\lambda_{\textup{Ban}}(\delta_{e}),\lambda_{\textup{Ban}}(\delta_{f})\rangle=\alpha_{\textup{Ban}}(\langle\delta_{e},\delta_{f}\rangle), \quad e, f \in \mathcal{G}^1 \label{eq:g}
\end{align}
\begin{align}
(r_{\ast} \otimes \mathrm{id})\alpha_{\textup{Ban}}(f)=\lambda_{\textup{Ban}}(r_{\ast}(f)), \quad f \in \mathrm{C}(\mathcal{G}^0). \label{eq:h}	
\end{align}We remind the reader that $\phi$ is given as in Example \ref{exa:graph}. Now to check identity \eqref{eq:e}, we note that $\phi(\delta_{v})\delta_{e}=\delta_{s(e),v}\delta_{e}$; hence the left-hand side of \eqref{eq:e} reduces to
\begin{align}
\lambda_{\textup{Ban}}(\phi(\delta_{v})\delta_{e})=\delta_{s(e),v}\sum_{f\in \mathcal{G}^1}\delta_{f}\ot q_{s(f)s(e)}q_{r(f)r(e)}.\label{eq:i}
\end{align}For the right-hand side of \eqref{eq:e}, we have the following expression.
\begin{align}
\begin{aligned}
&{}(\phi \otimes \mathrm{id}_{\mathrm{C}(G)})\alpha_{\textup{Ban}}(\delta_{v})\lambda_{\textup{Ban}}(\delta_{e})\\
={}&(\sum_{w\in \mathcal{G}^0}\phi(\delta_{w}) \ot q_{wv})(\sum_{f\in \mathcal{G}^1}\delta_{f}\ot q_{s(f)s(e)}q_{r(f)r(e)})\\
={}&\sum_{w\in \mathcal{G}^0}\sum_{w=s(f)}\delta_{f}\ot q_{wv}q_{s(f)s(e)}q_{r(f)r(e)}.
\end{aligned}\label{eq:j}
\end{align}But for $v \neq s(e)$, $q_{wv}q_{s(f)s(e)}=0$ for all $w=s(f)$, from the relation \eqref{eq:r1}; in that case, the final expression of \eqref{eq:j} is zero, coinciding with \eqref{eq:i} and so we have \eqref{eq:e}. For $v=s(e)$, the final expression of \eqref{eq:j} reduces to
\begin{align}
\sum_{w\in \mathcal{G}^0}\sum_{w=s(f)}\delta_{f}\ot q_{ws(e)}q_{s(f)s(e)}q_{r(f)r(e)}\label{eq:k}
\end{align}On the other hand, for $v=s(e)$, we now consider \eqref{eq:i}. 
\begin{align}
\begin{aligned}
\lambda_{\textup{Ban}}(\phi(\delta_{v})\delta_{e})={}&\delta_{s(e),v}\sum_{f\in \mathcal{G}^1}\delta_{f}\ot q_{s(f)s(e)}q_{r(f)r(e)}\\
={}&\sum_{f \in \mathcal{G}^1}\delta_{f}\ot q_{s(f)s(e)}q_{r(f)r(e)}\\
={}&\sum_{f\in \mathcal{G}^1}\delta_{f}\ot \sum_{w \in \mathcal{G}^0}q_{ws(e)}q_{s(f)s(e)}q_{r(f)r(e)}\\
={}&\sum_{w\in \mathcal{G}^0}\sum_{w=s(f)}\delta_{f}\ot q_{ws(e)}q_{s(f)s(e)}q_{r(f)r(e)},
\end{aligned}\label{eq:l}
\end{align}where the third equality is by \eqref{eq:r2}; and the fourth equality is by \eqref{eq:r1}. However, the final expression in \eqref{eq:l} is nothing but \eqref{eq:k}. Therefore we indeed have \eqref{eq:e}. We leave the checking of \eqref{eq:f} to the reader, which is similar to what has just been done for \eqref{eq:e}. To check the identity \eqref{eq:g}, we note that \[\langle\delta_{e},\delta_{f}\rangle=\delta_{e,f}\delta_{r(e)}.\] Then it reduces to the calculations already done in \cite{Web}*{Subsection 4.1.2}. So we are left with the checking of \eqref{eq:h}. To that end, we note that 
\[
r_{\ast}(\delta_{v})=\sum_{e \in r^{-1}(v)}\delta_{e}.
\]Therefore, the right-hand side of \eqref{eq:h} becomes
\begin{align}
\begin{aligned}
\lambda_{\textup{Ban}}(r_{\ast}(\delta_{v}))={}&\sum_{e \in r^{-1}(v)}\sum_{f \in \mathcal{G}^1}\delta_{f}\ot q_{s(f)s(e)}q_{r(f)r(e)}\\
={}& \sum_{f \in \mathcal{G}^1}\delta_{f}\ot \Big(\sum_{e \in r^{-1}(v)}q_{s(f)s(e)}q_{r(f)r(e)}\Big)
\end{aligned}\label{eq:m}
\end{align}On the other hand, the left-hand side of \eqref{eq:h} reduces to 
\begin{align}
\begin{aligned}
(r_{\ast}\otimes \mathrm{id}_{\mathrm{C}(G)})\alpha_{\textup{Ban}}(\delta_{v})={}& (r_{\ast}\ot\mathrm{id})(\sum_{w\in \mathcal{G}^0}\delta_{w}\ot q_{wv})\\
={}&\sum_{w\in \mathcal{G}^0}\Big(\sum_{f \in r^{-1}(w)}\delta_{f}\ot q_{wv}\Big)
\end{aligned}\label{eq:n}
\end{align}Now we observe that the set $\{f \in \mathcal{G}^1 \mid r(f)=w, w \in \mathcal{G}^0\}$ is the whole of $\mathcal{G}^1$ and therefore the last expression in \eqref{eq:n} reduces to
\begin{align}
\sum_{f \in \mathcal{G}^1}\delta_{f}\ot q_{r(f)v}.\label{eq:o}
\end{align}We also have, for each $f \in \mathcal{G}^1$, 
\begin{align}
q_{r(f)v}=\sum_{w \in \mathcal{G}^0}q_{s(f)w}q_{r(f)v};\label{eq:p}
\end{align}moreover, since $q_{s(f)w}q_{r(f)v}=0$ whenever $(v,w)$ is not an edge, 
\begin{align}
\sum_{w \in \mathcal{G}^0}q_{s(f)w}q_{r(f)v}=\sum_{e \in r^{-1}(v)}q_{s(f)s(e)}q_{r(f)r(e)}.\label{eq:q}
\end{align}Combining \eqref{eq:n}, \eqref{eq:o}, \eqref{eq:p} and \eqref{eq:q}, we obtain
\[
(r_{\ast}\otimes \mathrm{id}_{\mathrm{C}(G)})\alpha_{\textup{Ban}}(\delta_{v})=\sum_{f \in \mathcal{G}^1}\delta_{f}\otimes \Big(\sum_{e \in r^{-1}(v)}q_{s(f)s(e)}q_{r(f)r(e)}\Big),
\]which is exactly the last expression in \eqref{eq:m}. Therefore \eqref{eq:h} holds. This proves that indeed $(\mathrm{Aut}^{+}_{\textup{Ban}}(\clg),\alpha_{\textup{Ban}},\lambda_{\textup{Ban}})$ is an object in the category  $\clc_{\textup{Ban}}(\clg)$. 

Now we turn to the proof of the fact that $(\mathrm{Aut}^{+}_{\textup{Ban}}(\clg),\alpha_{\textup{Ban}},\lambda_{\textup{Ban}})$ is indeed the universal object in the category $\clc_{\textup{Ban}}(\clg)$. To that end, let $(G,\alpha,\lambda)$ be an object of the category $\clc_{\textup{Ban}}(\clg)$.
\[
\lambda(\delta_{e})=\sum_{f \in \mathcal{G}^1}\delta_{f}\ot a_{fe}, \quad a_{fe} \in \mathrm{C}(G), \quad e\in \mathcal{G}^1
\]
\[
\alpha(\delta_{v})=\sum_{w\in \mathcal{G}^0}\delta_{w}\ot b_{wv}, \quad b_{vw} \in \mathrm{C}(G), \quad v \in \mathcal{G}^0.
\]Now, as already observed, for any $v \in \mathcal{G}^0$,
\[ r_{\ast}(\delta_{v})=\sum_{e \in r^{-1}(v)}\delta_{e}.\]As the graph does not have any multiple edges or sources, for $w \in \mathcal{G}^0$ with $s(e)=w, r(e)=v$, \[\phi(\delta_{w})r_{\ast}(\delta_{v})=\delta_{e}.\]Hence we have
\begin{align}
\lambda(\phi(\delta_{w})r_{\ast}(\delta_{v}))=\sum_{f\in \mathcal{G}^1}\delta_{f}\ot a_{fe}, \text{ provided } s(e)=w,r(e)=v.
\end{align}On the other hand, since $(r_{\ast}\otimes \mathrm{id}_{\mathrm{C}(G)})\circ\alpha=\lambda\circ r_{\ast}$ and $\lambda(\phi(f)\xi)=(\phi \otimes \mathrm{id}_{\mathrm{C}(G)})\alpha(f)\lambda(\xi)$ for $f \in \mathrm{C}(\mathcal{G}^0)$ and $\xi \in \mathrm{C}(\mathcal{G}^1)$,
\begin{align} 
\begin{aligned}
\lambda(\phi(\delta_{w})r_{\ast}(\delta_{v}))={}&(\phi \otimes \mathrm{id}_{\mathrm{C}(G)})\alpha(\delta_{w})(r_{\ast}\ot\mathrm{id}_{\mathrm{C}(G)})\alpha(\delta_{v})\\
={}&\sum_{f : u \rightarrow p}\delta_{f}\ot b_{uw}b_{pv}. 
\end{aligned}
\end{align}By comparing coefficients, we get
\begin{align}
\lambda(\delta_{e})=\sum_{f \in \mathcal{G}^1}\delta_{f}\ot b_{s(f)s(e)}b_{r(f)r(e)}.
\end{align}Then the fact that $(b_{vw})_{v,w \in \mathcal{G}^0}$ satisfy the relations of $\mathrm{C}(\mathrm{Aut}^{+}_{\textup{Ban}}(\clg))$ can be checked, along the lines of \cite{Web}*{Subsection 4.3.3}, using \[\langle\lambda(\delta_{e}),\lambda(\delta_{e})\rangle=\alpha(\langle\delta_{e},\delta_{e}\rangle),\] together with the fact that\[\langle\delta_{e},\delta_{e}\rangle=\delta_{r(e)};\]we leave this to the reader. We therefore have shown the universality of the triple $(\mathrm{Aut}^{+}_{\textup{Ban}}(\clg),\alpha_{\textup{Ban}},\lambda_{\textup{Ban}})$, thus completing the proof of the theorem.
\end{proof}

\begin{remark}
The compact quantum group $\mathrm{Aut}^{+}_{\textup{Ban}}(\clg)$ is of Kac type; moreover, by Theorem \ref{thm:Bancorrespondence}$,  (\mathrm{C}(\mathcal{G}^1),\phi,\lambda_{\textup{Ban}})$ is a $\mathrm{Aut}^{+}_{\textup{Ban}}(\clg)$-equivariant $\mathrm{C}^*$-correspondence over the $\mathrm{Aut}^{+}_{\textup{Ban}}(\clg)$-$\mathrm{C}^*$-algebra $(\mathrm{C}(\mathcal{G}^0),\alpha_{\textup{Ban}})$. Since the module dynamics corresponding to the gauge action is the scalar dynamics, i.e., $U_t=e^{it}$, $t \in \mathbb{R}$, by Theorem \ref{thm:kmsequiv}, the lifted $\mathrm{Aut}^{+}_{\textup{Ban}}(\clg)$-action $\omega$ of $\mathrm{Aut}^{+}_{\textup{Ban}}(\clg)$ on $\mathrm{C}^*(\clg)$ preserves a \textup{KMS} state $\varphi$ at the critical inverse temperature if and only if it preserves the tracial state $\tau$ on $\mathrm{C}(\mathcal{G}^0)$. This yields another proof of  \cite{Joardar2}*{Proposition 2.31}.
\end{remark} 

It can be proved that for the universal object $(\mathrm{Aut}^{+}_{\textup{Ban}}(\clg),\alpha_{\textup{Ban}},\lambda_{\textup{Ban}})$, we also have 
\[
(s_{\ast}\ot\mathrm{id}_{\mathrm{C}(G)})\alpha_{\textup{Ban}}(f)=\lambda_{Ban}(s_{\ast}(f)),\] for all $f \in \mathrm{C}(\mathcal{G}^0)$, which is the content of the next theorem.

\begin{proposition}
For the universal object $(\mathrm{Aut}^{+}_{\textup{Ban}}(\clg),\alpha_{\textup{Ban}},\lambda_{\textup{Ban}})$, we also have 
\[
(s_{\ast}\ot\mathrm{id}_{\mathrm{C}(G)})\alpha_{\textup{Ban}}(f)=\lambda_{Ban}(s_{\ast}(f)),
\] for all $f \in \mathrm{C}(\mathcal{G}^0)$
\end{proposition}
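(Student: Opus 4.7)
The plan is to mimic, verbatim up to interchanging the roles of $r$ and $s$, the verification of identity \eqref{eq:h} carried out in the proof of Theorem \ref{thm:Bancorrespondence}. By linearity and continuity, it suffices to check the desired identity on the basis elements $\delta_v \in \mathrm{C}(\mathcal{G}^0)$, $v \in \mathcal{G}^0$.

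The computation I have in mind is the following. Using $s_{\ast}(\delta_v) = \sum_{e \in s^{-1}(v)}\delta_e$ together with the fact that every edge possesses a (unique) source, the left-hand side unfolds as
\[
(s_{\ast}\ot\mathrm{id}_{\mathrm{C}(G)})\alpha_{\textup{Ban}}(\delta_v) = \sum_{w\in \mathcal{G}^{0}}\Big(\sum_{f \in s^{-1}(w)} \delta_f\Big) \ot q_{wv} = \sum_{f \in \mathcal{G}^{1}}\delta_f \ot q_{s(f)v},
\]
while the right-hand side, by the explicit formula \eqref{eq:lambda} for $\lambda_{\textup{Ban}}$, becomes
\[
\lambda_{\textup{Ban}}(s_{\ast}(\delta_v)) = \sum_{f \in \mathcal{G}^{1}}\delta_f \ot \Big(\sum_{e \in s^{-1}(v)} q_{s(f)s(e)}q_{r(f)r(e)}\Big).
\]
Comparing coefficients of $\delta_f$ reduces the theorem to the relation
\[
q_{s(f)v} = \sum_{e \in s^{-1}(v)} q_{s(f)s(e)}q_{r(f)r(e)}, \qquad f \in \mathcal{G}^{1},\; v \in \mathcal{G}^{0},
\]
inside $\mathrm{C}(\mathrm{Aut}^{+}_{\textup{Ban}}(\clg))$.

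To establish this last relation, I would begin by using \eqref{eq:r2} to write $q_{s(f)v} = \sum_{w \in \mathcal{G}^0} q_{s(f)v}\, q_{r(f)w}$. By \eqref{eq:r3}, each summand $q_{s(f)v} q_{r(f)w}$ vanishes unless $(v,w) \in \mathcal{G}^{1}$; and since $\clg$ has no multiple edges, the set of $w$'s with $(v,w) \in \mathcal{G}^{1}$ is in bijection with $s^{-1}(v)$ via $w \leftrightarrow e$, where $e$ is the unique edge with $s(e)=v$, $r(e)=w$. Relabelling the surviving indices yields exactly $\sum_{e \in s^{-1}(v)} q_{s(f)s(e)}q_{r(f)r(e)}$, as required.

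The main (and essentially only) obstacle is bookkeeping of indices: one has to be sure that the assumptions of Banica's setup, namely the absence of multiple edges (and the commutation $q_{s(f)v}q_{r(f)w} = q_{r(f)w}q_{s(f)v}$ that is implicit in the $(v,w)\notin \mathcal{G}^1$ clause of \eqref{eq:r3}), make the rewriting of the $w$-sum as an $e$-sum unambiguous. No new ingredient beyond the relations \eqref{eq:r1}--\eqref{eq:r3} defining $\mathrm{C}(\mathrm{Aut}^{+}_{\textup{Ban}}(\clg))$ and the formulas \eqref{eq:lambda}--\eqref{eq:alpha} is needed; the proof is simply the $r \leftrightarrow s$ dual of the argument already in the text for \eqref{eq:h}.
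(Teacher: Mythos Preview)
Your proposal is correct and follows essentially the same approach as the paper: both reduce to showing $q_{s(f)v} = \sum_{e \in s^{-1}(v)} q_{s(f)s(e)}q_{r(f)r(e)}$ by expanding $q_{s(f)v}$ via \eqref{eq:r2} and killing the non-edge terms via \eqref{eq:r3}. The only cosmetic difference is that the paper splits off the case $s^{-1}(v)=\emptyset$ and treats it separately (since, unlike for $r$, no ``no sinks'' assumption is in force), whereas your uniform argument already absorbs that case --- the empty sum on both sides simply yields $0=0$.
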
 
	
\begin{proof} 
We begin by fixing a vertex $v \in \mathcal{G}^0$. Then if $s^{-1}(v)$ is nonempty, then the arguments used in the proof of the fact $(r_{\ast}\ot \mathrm{id}_{\mathrm{C}(G)})\alpha_{\textup{Ban}}(f)=\lambda_{\textup{Ban}}(r_{\ast}(f))$ can be repeated with obvious modifications. So we are left with the case when $s^{-1}(v)$ is empty. Then $s_{\ast}(\delta_{v})=0$ so that $\lambda_{\textup{Ban}}(s_{\ast}(\delta_{v}))=0$. On the other hand, denoting the generators of the $\mathrm{C}^*$-algebra $\mathrm{C}(\mathrm{Aut}^{+}_{\textup{Ban}}(\clg))$ by $\{q_{vw}\}_{v,w\in \mathcal{G}^0}$, as in Theorem \ref{thm:genrel}, we obtain, 
\[
(s_{\ast}\ot\mathrm{id}_{\mathrm{C}(G)})\alpha_{\textup{Ban}}(\delta_{v})=\sum_{w \in \mathcal{G}^0}s_{\ast}(\delta_{w})\ot q_{wv}=\sum_{f \in \mathcal{G}^1}(\sum_{s(f)=w}\delta_{f}\ot q_{s(f)v}).
\]But, for a fixed edge $f \in \mathcal{G}^1$, $q_{s(f)v}=\sum_{w \in \mathcal{G}^0}q_{s(f)v}q_{r(f)w}$. Since $s^{-1}(v)$ is empty, $q_{s(f)v}q_{r(f)w}=0$, which implies that $(s_{\ast}\ot \mathrm{id}_{\mathrm{C}(G)})\alpha_{\textup{Ban}}(\delta_{v})=0$ as well.
\end{proof}

The following lemma will be useful in proving Theorem \ref{thm:Biccorrespondence}.

\begin{lemma}
\label{lem:Ban_Bic}
Let $(G,\alpha,\lambda)$ be an object of the category $\clc_{\textup{Bic}}(\clg)$ \textup{(}Definition \textup{\ref{def:Bichcat})}. Then
\[
(r_\ast\otimes \mathrm{id}_{\mathrm{C}(G)})\alpha(f)=\lambda(r_{\ast}(f)),
\]as well as
\[
(s_\ast\otimes \mathrm{id}_{\mathrm{C}(G)})\alpha(f)=\lambda(s_{\ast}(f)),
\]for all $f \in \mathrm{C}(\mathcal{G}^0)$. 

Conversely, assume that one of $r$ and $s$ is injective. Let $(G,\alpha,\lambda)$ be a triple, where $G$ is a compact quantum group, $\alpha : \mathrm{C}(\mathcal{G}^0) \rightarrow \mathrm{C}(\mathcal{G}^0) \otimes \mathrm{C}(G)$ is a unital $*$-homomorphism and $\lambda : \mathrm{C}(\mathcal{G}^1) \rightarrow \mathrm{C}(\mathcal{G}^1) \otimes \mathrm{C}(G)$ is a linear map satisfying the following conditions.
\begin{itemize}
	\item The pair $(\mathrm{C}(\mathcal{G}^0),\alpha)$ is a $G$-$\mathrm{C}^*$-algebra;
	\item the triple $(\mathrm{C}(\mathcal{G}^1),\phi,\lambda)$ is a $G$-equivariant $\mathrm{C}^*$-correspondence over the $G$-$\mathrm{C}^*$-algebra $(\mathrm{C}(\mathcal{G}^0),\alpha)$;
	\item moreover,\[
		(r_\ast\otimes \mathrm{id}_{\mathrm{C}(G)})\alpha(f)=\lambda(r_{\ast}(f)),
		\]as well as
		\[
		(s_\ast\otimes \mathrm{id}_{\mathrm{C}(G)})\alpha(f)=\lambda(s_{\ast}(f)),
		\]for all $f \in \mathrm{C}(\mathcal{G}^0)$. 
\end{itemize}
Then the pair $(\mathrm{C}(\mathcal{G}^1),\lambda)$ is a $G$-$\mathrm{C}^*$-algebra and consequently, $(G,\alpha,\lambda)$ is an object of the category $\clc_{\textup{Bic}}(\clg)$.
\end{lemma}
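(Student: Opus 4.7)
For the forward direction, the key observation is that in the right module structure of $\mathrm{C}(\mathcal{G}^1)$ over $\mathrm{C}(\mathcal{G}^0)$ one has $r_{\ast}(f) = 1_{\mathrm{C}(\mathcal{G}^1)} \cdot f$, and in the left action via $\phi$ one has $s_{\ast}(f) = \phi(f) \cdot 1_{\mathrm{C}(\mathcal{G}^1)}$, as a direct pointwise computation shows. Since by assumption $(\mathrm{C}(\mathcal{G}^1), \lambda)$ is a $G$-$\mathrm{C}^*$-algebra, $\lambda$ is a unital $*$-homomorphism, so $\lambda(1_{\mathrm{C}(\mathcal{G}^1)}) = 1_{\mathrm{C}(\mathcal{G}^1)} \otimes 1_{\mathrm{C}(G)}$. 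Applying the right module identity $\lambda(\xi \cdot f) = \lambda(\xi) \alpha(f)$ at $\xi = 1$ then yields
\[
\lambda(r_{\ast}(f)) = (1 \otimes 1) \cdot \alpha(f) = (r_{\ast} \otimes \mathrm{id}_{\mathrm{C}(G)}) \alpha(f),
\]
since the right action of $a \otimes g$ on $1 \otimes 1$ is $r_{\ast}(a) \otimes g$. The $s_{\ast}$ identity is entirely analogous: apply the equivariance identity $\lambda(\phi(a) \xi) = (\phi \otimes \mathrm{id}_{\mathrm{C}(G)})(\alpha(a)) \cdot \lambda(\xi)$ at $\xi = 1$ and use that $(\phi \otimes \mathrm{id}_{\mathrm{C}(G)})(a \otimes g) \cdot (1 \otimes 1) = s_{\ast}(a) \otimes g$.

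For the converse, the crucial point is that injectivity of $r$ (respectively $s$) forces $r_{\ast}$ (respectively $s_{\ast}$) to be \emph{surjective}. Indeed, if $r$ is injective then for every edge $e$ we have $r_{\ast}(\delta_{r(e)}) = \sum_{f : r(f) = r(e)} \delta_f = \delta_e$, so the basis $\{\delta_e\}_{e \in \mathcal{G}^1}$ lies in the image of $r_{\ast}$. The hypothesis $\lambda \circ r_{\ast} = (r_{\ast} \otimes \mathrm{id}_{\mathrm{C}(G)}) \circ \alpha$ then exhibits $\lambda \circ r_{\ast}$ as a composition of unital $*$-homomorphisms, using that $(\mathrm{C}(\mathcal{G}^0), \alpha)$ is a $G$-$\mathrm{C}^*$-algebra. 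Given $\xi, \eta \in \mathrm{C}(\mathcal{G}^1)$, choose $a, b \in \mathrm{C}(\mathcal{G}^0)$ with $r_{\ast}(a) = \xi$ and $r_{\ast}(b) = \eta$; then
\[
\lambda(\xi \eta) = \lambda(r_{\ast}(ab)) = (r_{\ast} \otimes \mathrm{id}_{\mathrm{C}(G)})(\alpha(a) \alpha(b)) = \lambda(\xi) \lambda(\eta),
\]
and similarly $\lambda(\xi^*) = \lambda(\xi)^*$ and $\lambda(1) = 1 \otimes 1$; the case of $s$ injective is handled identically using the $s_{\ast}$ intertwiner. Since coassociativity of $\lambda$ and the Podle\'s condition $[\lambda(\mathrm{C}(\mathcal{G}^1))(1 \otimes \mathrm{C}(G))] = \mathrm{C}(\mathcal{G}^1) \otimes \mathrm{C}(G)$ are already part of the $G$-equivariant Hilbert module data, all the axioms of Definition \textup{\ref{def:Galg}} are now verified for $(\mathrm{C}(\mathcal{G}^1), \lambda)$.

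The only nontrivial ingredient is the observation that injectivity of $r$ or $s$ implies surjectivity of $r_{\ast}$ or $s_{\ast}$; beyond that, both directions reduce to formal manipulations with the module and equivariance axioms, and I do not anticipate any serious obstacle.
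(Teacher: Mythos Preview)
Your proof is correct and follows essentially the same approach as the paper: both directions hinge on the identities $\xi\cdot f=\xi\,r_\ast(f)$ and $\phi(f)\xi=s_\ast(f)\,\xi$, specialized at $\xi=1$ for the forward implication, and on the surjectivity of $r_\ast$ (or $s_\ast$) under the injectivity hypothesis for the converse. Your presentation is slightly more explicit in spelling out why $r$ injective forces $r_\ast(\delta_{r(e)})=\delta_e$, but otherwise the arguments coincide.
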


\begin{proof}
We first assume that $(G,\alpha,\lambda)$ is an object in the category $\clc_{\textup{Bic}}(\clg)$. Let $\xi \in \mathrm{C}(\mathcal{G}^1)$ and $f \in \mathrm{C}(\mathcal{G}^0)$. It is clear that \[\xi f=\xi r_{\ast}(f),\] where $\xi f$ denotes the right module action of $f$ on $\xi$ and $\xi r_{\ast}(f)$ denotes the algebra multiplication in $\mathrm{C}(\clg^{1})$. Since $\lambda$ is a $G$-action, we obtain
\[
\begin{aligned}
\lambda(\xi f)={}&\lambda(\xi r_{\ast}(f))\\
={}&\lambda(\xi)\lambda(r_{\ast}(f)).
\end{aligned}
\]Similarly, decoding the right module action of $\mathrm{C}(\mathcal{G}^0)\ot \mathrm{C}(G)$ on $\mathrm{C}(\mathcal{G}^1)\ot \mathrm{C}(G)$, we get the following.
\[
\begin{aligned}
\lambda(\xi f)={}&\lambda(\xi)\alpha(f)\\
%	&=& (\xi_{0}.f_{0}\ot\xi_{1}f_{1})\\
={}&(\lambda(\xi))(r_{\ast}\otimes \mathrm{id}_{\mathrm{C}(G)})\alpha(f).
\end{aligned}
\]In particular, choosing $\xi=1$, we obtain 
\[
\lambda(r_{\ast}(f))=(r_{\ast}\otimes \mathrm{id}_{\mathrm{C}(G)})\alpha(f),
\]for all $f \in \mathrm{C}(\mathcal{G}^0)$. Similarly, considering the left module action produces \[(s_{\ast}\otimes \mathrm{id}_{\mathrm{C}(G)})\alpha(f)=\lambda(s_{\ast}(f)).\]

For the converse, we assume that $r$ is injective. Then we note that it is enough to show that $\lambda$ is a $*$-homomorphism. As $r$ is injective, $r_{\ast} : \mathrm{C}(\mathcal{G}^0) \raro \mathrm{C}(\mathcal{G}^1)$ is a surjective $*$-homomorphism. So for $\xi,\eta \in \mathrm{C}(\mathcal{G}^1)$, 
\[
\lambda(\xi\eta)=\lambda(r_{\ast}(f) r_{\ast}(g)),
\]for some $f,g \in \mathrm{C}(\mathcal{G}^0)$. Since $r_{\ast}$ and therefore $(r_{\ast}\otimes \mathrm{id}_{\mathrm{C}(G)})$ are $*$-homomorphisms, we have
\[
\lambda(\xi\eta)=\lambda(r_{\ast}(fg))=(r_{\ast}\otimes \mathrm{id}_{\mathrm{C}(G)})(\alpha(f)\alpha(g)),
\]which is nothing but $(r_{\ast}\otimes \mathrm{id}_{\mathrm{C}(G)})\alpha(f)(r_{\ast}\otimes \mathrm{id}_{\mathrm{C}(G)})\alpha(g)$. Therefore
\[
\begin{aligned}
\lambda(\xi\eta)={}&(r_{\ast}\otimes \mathrm{id}_{\mathrm{C}(G)})\alpha(f)(r_{\ast}\otimes \mathrm{id}_{\mathrm{C}(G)})\alpha(g)\\
={}&\lambda(r_{\ast}(f))\lambda(r_{\ast}(g))\\
={}&\lambda(\xi)\lambda(\eta),
\end{aligned}
\]proving that $\lambda$ is a $*$-homomorphism. Similarly, if $s$ is injective, using the identity $(s_{\ast}\otimes \mathrm{id}_{\mathrm{C}(G)})\circ\alpha=\lambda \circ s_{\ast}$, one can prove that $\lambda$ is a $*$-homomorphism.
\end{proof}

Now we can turn to the proof of Theorem \ref{thm:Biccorrespondence}.

\begin{proof}[Proof of Theorem \ref{thm:Biccorrespondence}]
First, let us prove that $\mathrm{Aut}^{+}_{\textup{Bic}}(\clg)$ can be made into an object of the category $\clc_{\textup{Bic}}(\clg)$. We denote the generators of $\mathrm{Aut}^{+}_{\textup{Bic}}(\clg)$ by $(q_{vw})_{v,w\in \mathcal{G}^0}$ as in Remark \ref{rem:bichon}. let us define the maps $\lambda_{\textup{Bic}}$ and $\alpha_{\textup{Bic}}$ by equations \eqref{eq:lambda} and \eqref{eq:alpha}, respectively, i.e.,
\begin{align}
\lambda_{\textup{Bic}}(\delta_{e})=\sum_{f\in \mathcal{G}^1}\delta_{f}\ot q_{s(f)s(e)}q_{r(f)r(e)}, \quad e \in \mathcal{G}^1,
\end{align}
\begin{align}
\alpha_{\textup{Bic}}(\delta_{v})=\sum_{w\in \mathcal{G}^0}\delta_{w}\ot q_{wv}, \quad v \in \mathcal{G}^0.
\end{align}
Then $\lambda_{\textup{Bic}}$ is a $\mathrm{Aut}^{+}_{\textup{Bic}}(\clg)$-action on $\mathrm{C}(\mathcal{G}^1)$, by the definition of $\mathrm{Aut}^{+}_{\textup{Bic}}(\clg)$. Now $\mathrm{Aut}^{+}_{\textup{Bic}}(\clg)$ is a quantum subgroup of $\mathrm{Aut}^{+}_{\textup{Ban}}(\clg)$. We have already proved that $(\mathrm{C}(\mathcal{G}^1),\phi,\lambda_{\textup{Ban}})$ is a $\mathrm{Aut}^{+}_{\textup{Ban}}(\clg)$-equivariant $\mathrm{C}^*$-correspondence over the $\mathrm{Aut}^{+}_{\textup{Ban}}(\clg)$-$\mathrm{C}^*$-algebra $(\mathrm{C}(\mathcal{G}^0),\alpha_{\textup{Ban}})$. Therefore $(\mathrm{C}(\mathcal{G}^1),\phi,\lambda_{\textup{Bic}})$ is a $\mathrm{Aut}^{+}_{\textup{Bic}}(\clg)$-equivariant $\mathrm{C}^*$-correspondence over the $\mathrm{Aut}^{+}_{\textup{Bic}}(\clg)$-$\mathrm{C}^*$-algebra $(\mathrm{C}(\mathcal{G}^0),\alpha_{\textup{Bic}})$. This proves that $(\mathrm{Aut}^{+}_{\textup{Bic}}(\clg),\alpha_{\textup{Bic}},\lambda_{\textup{Bic}})$ is an object in the category $\clc_{\textup{Bic}}(\clg)$. 

To show that $(\mathrm{Aut}^{+}_{\textup{Bic}}(\clg),\alpha_{\textup{Bic}},\lambda_{\textup{Bic}})$ is a universal object, we pick an object $(G,\alpha,\lambda)$ from the category $\clc_{\textup{Bic}}(\clg)$. Then, by Lemma \ref{lem:Ban_Bic}, \[(r_{\ast}\otimes \mathrm{id}_{\mathrm{C}(G)})\circ\alpha=\lambda\circ r_{\ast}\]and consequently, $G$ becomes a quantum subgroup of $\mathrm{Aut}^{+}_{\textup{Ban}}(\clg)$. Since $\lambda$ is an action, the relations (\ref{eq:r4}) also hold in $\mathrm{C}(G)$. Hence, by the universality of $\mathrm{Aut}^{+}_{\textup{Bic}}(\clg)$, $G$ is a quantum subgroup of $\mathrm{Aut}^{+}_{\textup{Bic}}(\clg)$ as well. This shows that $(\mathrm{Aut}^{+}_{\textup{Bic}}(\clg),\alpha_{\textup{Bic}},\lambda_{\textup{Bic}})$ indeed is a universal object in the category $\clc_{\textup{Bic}}(\clg)$.
\end{proof}

The converse direction of Lemma \ref{lem:Ban_Bic} has the following corollary.

\begin{corollary}
\label{cor:BanBicsame}
Let $\clg$ be a finite, simple graph without any source. If either $r$ or $s$ is injective, then $\mathrm{Aut}^{+}_{\textup{Bic}}(\clg)$ is isomorphic to $\mathrm{Aut}^{+}_{\textup{Ban}}(\clg)$.
\end{corollary}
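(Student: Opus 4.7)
The plan is to combine the universal property of $(\mathrm{Aut}^{+}_{\textup{Ban}}(\clg),\alpha_{\textup{Ban}},\lambda_{\textup{Ban}})$ in $\clc_{\textup{Ban}}(\clg)$ with the converse direction of Lemma \ref{lem:Ban_Bic}. The inclusion $\mathrm{Aut}^{+}_{\textup{Bic}}(\clg) \hookrightarrow \mathrm{Aut}^{+}_{\textup{Ban}}(\clg)$ is already clear from Remark \ref{rem:bichon}, since $\mathrm{C}(\mathrm{Aut}^{+}_{\textup{Bic}}(\clg))$ is by construction a quotient of $\mathrm{C}(\mathrm{Aut}^{+}_{\textup{Ban}}(\clg))$ obtained by imposing the additional relations \eqref{eq:r4}. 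So the content of the corollary lies in producing the opposite arrow.

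First I would assemble what is known about $(\mathrm{Aut}^{+}_{\textup{Ban}}(\clg),\alpha_{\textup{Ban}},\lambda_{\textup{Ban}})$: by Theorem \ref{thm:Bancorrespondence} it is an object of $\clc_{\textup{Ban}}(\clg)$, so in particular $(r_{\ast}\otimes \mathrm{id}_{\mathrm{C}(G)})\alpha_{\textup{Ban}} = \lambda_{\textup{Ban}}\circ r_{\ast}$; and by the proposition immediately following that theorem we also have the companion identity $(s_{\ast}\otimes \mathrm{id}_{\mathrm{C}(G)})\alpha_{\textup{Ban}} = \lambda_{\textup{Ban}}\circ s_{\ast}$. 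Hence the hypotheses of the converse direction of Lemma \ref{lem:Ban_Bic} are met as soon as at least one of $r$ or $s$ is injective, which is precisely what the statement assumes. Applying that converse yields that $\lambda_{\textup{Ban}} : \mathrm{C}(\mathcal{G}^{1}) \to \mathrm{C}(\mathcal{G}^{1})\otimes \mathrm{C}(\mathrm{Aut}^{+}_{\textup{Ban}}(\clg))$ is a $*$-homomorphism, so $(\mathrm{C}(\mathcal{G}^{1}),\lambda_{\textup{Ban}})$ is a $\mathrm{Aut}^{+}_{\textup{Ban}}(\clg)$-$\mathrm{C}^{*}$-algebra; the remaining axioms of Definition \ref{def:Bichcat} are automatic since $(\mathrm{Aut}^{+}_{\textup{Ban}}(\clg),\alpha_{\textup{Ban}},\lambda_{\textup{Ban}})$ is already an object of $\clc_{\textup{Ban}}(\clg)$. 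Consequently $(\mathrm{Aut}^{+}_{\textup{Ban}}(\clg),\alpha_{\textup{Ban}},\lambda_{\textup{Ban}})$ is an object of $\clc_{\textup{Bic}}(\clg)$.

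Next I would invoke the universality of $(\mathrm{Aut}^{+}_{\textup{Bic}}(\clg),\alpha_{\textup{Bic}},\lambda_{\textup{Bic}})$ inside $\clc_{\textup{Bic}}(\clg)$, furnished by Theorem \ref{thm:Biccorrespondence}, to obtain a morphism $(\mathrm{Aut}^{+}_{\textup{Ban}}(\clg),\alpha_{\textup{Ban}},\lambda_{\textup{Ban}}) \to (\mathrm{Aut}^{+}_{\textup{Bic}}(\clg),\alpha_{\textup{Bic}},\lambda_{\textup{Bic}})$, i.e.\ a Hopf $*$-homomorphism $\mathrm{C}(\mathrm{Aut}^{+}_{\textup{Bic}}(\clg))\to \mathrm{C}(\mathrm{Aut}^{+}_{\textup{Ban}}(\clg))$ intertwining both $\alpha$'s and both $\lambda$'s. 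This exhibits $\mathrm{Aut}^{+}_{\textup{Ban}}(\clg)$ as a quantum subgroup of $\mathrm{Aut}^{+}_{\textup{Bic}}(\clg)$. Together with the a priori quotient map $\mathrm{C}(\mathrm{Aut}^{+}_{\textup{Ban}}(\clg))\to \mathrm{C}(\mathrm{Aut}^{+}_{\textup{Bic}}(\clg))$ from Remark \ref{rem:bichon}, and the fact that both arrows are induced by the same universal matching of generators $q_{vw}\mapsto q_{vw}$, the two morphisms are inverse to each other, giving an isomorphism $\mathrm{Aut}^{+}_{\textup{Bic}}(\clg)\cong \mathrm{Aut}^{+}_{\textup{Ban}}(\clg)$.

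The only non-routine step is the invocation of Lemma \ref{lem:Ban_Bic}, where injectivity of $r$ or $s$ is used: it translates into surjectivity of $r_{\ast}$ or $s_{\ast}$, which in the presence of the two compatibility identities forces $\lambda_{\textup{Ban}}$ to be multiplicative. I would therefore expect the main subtlety to be checking that the uniqueness clauses of the two universal properties are compatible, ensuring that composing the two Hopf $*$-homomorphisms in either order produces the identity; this is immediate once one notes that both compositions are morphisms in $\clc_{\textup{Ban}}(\clg)$ (respectively $\clc_{\textup{Bic}}(\clg)$) fixing the actions, and so must equal $\mathrm{id}$ by uniqueness.
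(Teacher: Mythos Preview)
Your proof is correct and follows essentially the same approach as the paper: use Lemma \ref{lem:Ban_Bic} (together with the $r_\ast$ and $s_\ast$ compatibilities already established for $\mathrm{Aut}^{+}_{\textup{Ban}}(\clg)$) to show that $(\mathrm{Aut}^{+}_{\textup{Ban}}(\clg),\alpha_{\textup{Ban}},\lambda_{\textup{Ban}})$ lies in $\clc_{\textup{Bic}}(\clg)$, and then conclude from mutual quantum subgroup inclusions. Your write-up is somewhat more explicit about why the two Hopf $*$-homomorphisms are mutually inverse, but the argument is the same.
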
 

\begin{proof}
We know that $\mathrm{Aut}^{+}_{\textup{Bic}}(\clg)$ is a quantum subgroup of $\mathrm{Aut}^{+}_{\textup{Ban}}(\clg)$. By Lemma \ref{lem:Ban_Bic}, we conclude that the pair $(\mathrm{C}(\mathcal{G}^1),\lambda)$ is a $\mathrm{Aut}^{+}_{\textup{Ban}}(\clg)$-$\mathrm{C}^*$-algebra and consequently, $(\mathrm{Aut}^{+}_{\textup{Ban}}(\clg),\alpha_{\textup{Ban}},\lambda_{\textup{Ban}})$ is an object of the category $\clc_{\textup{Bic}}(\clg)$. But then $\mathrm{Aut}^{+}_{\textup{Ban}}(\clg)$ is a quantum subgroup of $\mathrm{Aut}^{+}_{\textup{Bic}}(\clg)$. Therefore, $\mathrm{Aut}^{+}_{\textup{Ban}}(\clg)$ is isomorphic to $\mathrm{Aut}^{+}_{\textup{Bic}}(\clg)$.
\end{proof}

As an application, we shall give an example of a finite, simple graph $\clg$
without any source such that $\mathrm{Aut}^{+}_{\textup{Ban}}(\clg)$ is
genuinely quantum and is isomorphic to
$\mathrm{Aut}^{+}_{\textup{Bic}}(\clg)$. To that end, let us consider a graph
where the map $r$ is a bijection between the edge set and the vertex set. Note
that, if we assume the graph to be connected and without multiple edges or
loops, then we are not left with many choices. The graph essentially reduces to
an oriented $m$-gon whose quantum symmetry coincides with the classical symmetry
group which is $\mathbb{Z}/m\mathbb{Z}$ (\cite{Ban}*{Theorem 4.1}). Now, if we
consider disjoint union of $n$-number of such graphs, then the map $r$ is still
a bijection and we can apply Corollary \ref{cor:BanBicsame}. Combining this with
\cite{Bichonwreath}*{Theorem 4.2}, we obtain another proof of the following well-known result. But before stating it, let us remark that for a compact quantum group $G$, we denote the free wreath product of $G$ with the quantum permutation group of $n$-points $\textup{S}^+_n$ by $G\wr_{\ast} \textup{S}^+_n$; we refer the reader to \cite{Bichonwreath} for more details on this.

\begin{theorem}\cite{BV2009}*{Theorem 3.4}
Let $\clg$ be a graph which is a disjoint union of $n$-number of copies of oriented $m$-gons. Then $\mathrm{Aut}^{+}_{\textup{Ban}}(\clg)$ and $\mathrm{Aut}^{+}_{\textup{Bic}}(\clg)$ coincide and are isomorphic to
\[
\mathbb{Z}/m\mathbb{Z}\wr_{\ast}\textup{S}^+_n.
\]
\end{theorem}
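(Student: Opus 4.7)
The plan is to combine Corollary \ref{cor:BanBicsame} with two results from the literature cited just before the statement. First I would verify that for a disjoint union $\clg$ of $n$ copies of oriented $m$-gons, the range map $r : \mathcal{G}^1 \rightarrow \mathcal{G}^0$ is a bijection: within an oriented $m$-gon $v_1 \to v_2 \to \cdots \to v_m \to v_1$ every vertex is the range of a unique edge, and the property is preserved under disjoint union. (Similarly, the source map $s$ is a bijection.) In particular $r$ is injective, so Corollary \ref{cor:BanBicsame} applies and yields the first half of the statement, namely $\mathrm{Aut}^{+}_{\textup{Ban}}(\clg) \cong \mathrm{Aut}^{+}_{\textup{Bic}}(\clg)$.

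For the identification with the free wreath product, I would invoke \cite{Bichonwreath}*{Theorem 4.2}, which asserts that for a connected graph $\clh$, the Bichon quantum automorphism group of the disjoint union of $n$ copies of $\clh$ is canonically isomorphic to $\mathrm{Aut}^{+}_{\textup{Bic}}(\clh) \wr_{\ast} \textup{S}^+_n$. Specializing $\clh$ to the oriented $m$-gon and applying \cite{Ban}*{Theorem 4.1} together with Corollary \ref{cor:BanBicsame} applied to a single $m$-gon (whose range map is already a bijection, hence satisfies the hypothesis), the Bichon quantum automorphism group of the oriented $m$-gon is identified with the classical cyclic group $\mathbb{Z}/m\mathbb{Z}$. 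Substituting this into the wreath product description gives $\mathrm{Aut}^{+}_{\textup{Bic}}(\clg) \cong \mathbb{Z}/m\mathbb{Z} \wr_{\ast} \textup{S}^+_n$, and combined with the first step this completes the proof.

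The argument is essentially bookkeeping involving the two cited results and the corollary from the previous section, so no serious obstacle is anticipated. The only point requiring a small amount of care is that \cite{Ban}*{Theorem 4.1} is phrased in the Banica setting while \cite{Bichonwreath}*{Theorem 4.2} is phrased in the Bichon setting; the bridge between these two on the single oriented $m$-gon is supplied by Corollary \ref{cor:BanBicsame}, which is why this observation had to be made explicit at the level of the $m$-gon before invoking the wreath product formula.
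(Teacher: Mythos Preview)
Your proposal is correct and follows essentially the same route as the paper: the paper's argument (contained in the paragraph preceding the theorem) observes that $r$ is a bijection for a single oriented $m$-gon and hence for their disjoint union, applies Corollary~\ref{cor:BanBicsame}, identifies the quantum symmetry of a single $m$-gon with $\mathbb{Z}/m\mathbb{Z}$ via \cite{Ban}*{Theorem 4.1}, and then invokes \cite{Bichonwreath}*{Theorem 4.2}. You are in fact slightly more careful than the paper in making explicit the use of Corollary~\ref{cor:BanBicsame} on the single $m$-gon to bridge the Banica-setting result \cite{Ban}*{Theorem 4.1} with the Bichon-setting wreath product formula.
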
 

%\begin{proof}
%By Theorem \ref{banica}, ${\rm Aut}^{+}_{\rm Ban}(\clg)$ is a quantum symmetry of the $\mathrm{C}^*$-correspondence. Denoting the coaction on $\mathrm{C}(\mathcal{G}^0)$ and the corepresentation on $\mathrm{C}(\mathcal{G}^1)$ by $\beta$, again by Theorem \ref{banica}, we have
%\[
%	(r_{\ast}\otimes \mathrm{id}_{\mathrm{C}(G)})\alpha(f)=\lambda(r_{\ast}(f)), \ f\in \mathrm{C}(\mathcal{G}^0).
%\]
%Since $r$ is one-one, By Lemma \ref{}, $\beta$ is a coaction on $\mathrm{C}(\mathcal{G}^1)$ and hence ${\rm Aut}^{+}_{\rm Ban}(\clg)$ coincides with $\mathrm{Aut}^{+}_{\textup{Ban}}(\clg)$. But by \cite{Bichon2}, $\mathrm{Aut}^{+}_{\textup{Ban}}(\clg)$ is isomorphic to $\mathbb{Z}_{m}\wr_{\ast}\textup{S}_{n}^{+}$. Therefore ${\rm Aut}^{+}_{\rm Ban}(\clg)$ is also isomorphic to $\mathbb{Z}_{m}\wr_{\ast}\textup{S}_{n}^{+}$.
% \end{proof}

\section{Applications to quantum symmetries of graphs III: The case of multigraphs}
\label{sec:applicationsthree}
In this section, we allow the graphs to admit multiple edges. Let then
$\clg=(\mathcal{G}^0,\mathcal{G}^1,r,s)$ be a finite, directed graph, possibly
with multiple edges, but without any sources. We are interested in the category
$\clc_{\textup{Bic}}(\clg)$ from Definition \ref{def:Bichcat}. Although for a
simple graph, the above category has a smaller universal object compared to the
category of Definition \ref{def:Bancat}, for graphs with multiple edges, the
category $\clc_{\textup{Bic}}(\clg)$ is more relevant. For example, in the case
of the graph of Cuntz algebra, the $\textup{C}^*$\nobreakdash-algebra
$\textup{C}(\mathcal{G}^0)$ is $\mathbb{C}$ and hence has a trivial quantum
symmetry and consequently, the category $\clc_{\textup{Ban}}(\clg)$ only
consists of trivial actions of compact quantum groups. 

\begin{definition}
	\label{def:Bichcatmult} 
	Let $\mathcal{G}=(\mathcal{G}^1,\mathcal{G}^0,r,s)$ be a finite, directed graph, possibly with loops or multiple edges. We define the category $\clc_{\textup{Bic}}^{\textup{mult}}(\clg)$ as follows. 
	\begin{itemize}
		\item An object of $\clc_{\textup{Bic}}^{\textup{mult}}(\clg)$ is a triple $(G,\alpha,\lambda)$, where $G$ is a compact quantum group, $\alpha : \mathrm{C}(\mathcal{G}^0) \rightarrow \mathrm{C}(\mathcal{G}^0) \otimes \mathrm{C}(G)$ is a unital $*$-homomorphism and $\lambda : \mathrm{C}(\mathcal{G}^1) \rightarrow \mathrm{C}(\mathcal{G}^1) \otimes \mathrm{C}(G)$ is a linear map satisfying the following conditions.
		\begin{itemize}
			\item The pair $(\mathrm{C}(\mathcal{G}^0),\alpha)$ is a $G$-$\mathrm{C}^*$-algebra.
			\item the triple $(\mathrm{C}(\mathcal{G}^1),\phi,\lambda)$ is a $G$-equivariant $\mathrm{C}^*$-correspondence over the $G$-$\mathrm{C}^*$-algebra $(\mathrm{C}(\mathcal{G}^0),\alpha)$;
			\item the pair $(\mathrm{C}(\mathcal{G}^1),\lambda)$ is a $G$-$\mathrm{C}^*$-algebra and the action $\lambda$ is faithful.
		\end{itemize}
		\item Let $(G_1,\alpha_1,\lambda_1)$ and $(G_2,\alpha_2,\lambda_2)$ be two objects of the category $\clc_{\textup{Bic}}^{\textup{mult}}(\clg)$. A morphism $f : (G_1,\alpha_1,\lambda_1) \rightarrow (G_2,\alpha_2,\lambda_2)$ in $\clc_{\textup{Bic}}^{\textup{mult}}(\clg)$ is again by definition a Hopf $*$-homomorphism $f : \mathrm{C}(G_2) \rightarrow \mathrm{C}(G_1)$ such that 
		\begin{itemize}
			\item $(\mathrm{id}_{\mathrm{C}(\mathcal{G}^0)} \otimes f)\alpha_2=\alpha_1$;
			\item $(\mathrm{id}_{\mathrm{C}(\mathcal{G}^1)} \otimes f)\lambda_2=\lambda_1$.
		\end{itemize}
	\end{itemize}
\end{definition}
\begin{theorem}
\label{thm:Bic_multi}
For a finite, directed graph $\clg$ which has multiple edges but has no source, the category $\clc_{\textup{Bic}}^{\textup{mult}}(\clg)$ admits a universal object.
\end{theorem}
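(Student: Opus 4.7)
The strategy is to realize $G_{\textup{univ}}$ as an explicit quotient of the quantum permutation group $\textup{S}_n^+$ on the edge set, where $n=|\mathcal{G}^1|$. Denote by $(u_{fe})_{f,e\in\mathcal{G}^1}$ the generating magic unitary of $\mathrm{C}(\textup{S}_n^+)$. For any object $(G,\alpha,\lambda)$ of $\mathcal{C}_{\textup{Bic}}^{\textup{mult}}(\mathcal{G})$, faithfulness of $\lambda$ forces $G$ to be a quantum subgroup of $\textup{S}_n^+$ with $\lambda(\delta_e)=\sum_f\delta_f\otimes a_{fe}$ for a magic unitary $(a_{fe})\subset\mathrm{C}(G)$, and $\alpha(\delta_v)=\sum_w\delta_w\otimes b_{wv}$. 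The proof of Lemma \ref{lem:Ban_Bic} nowhere uses simplicity of the graph, so the identities $(r_\ast\otimes\mathrm{id})\alpha=\lambda\circ r_\ast$ and $(s_\ast\otimes\mathrm{id})\alpha=\lambda\circ s_\ast$ remain valid in the multigraph setting; unpacking these, together with the inner-product equivariance $\langle\lambda(\delta_e),\lambda(\delta_e)\rangle=\alpha(\delta_{r(e)})$, produces three families of relations:
\begin{itemize}
\item[(R1)] for $r(f)=r(f')$, $\sum_{e\in r^{-1}(v)}a_{fe}=\sum_{e\in r^{-1}(v)}a_{f'e}$, with common value denoted $b_{r(f),v}$;
\item[(R2)] $\sum_{e\in s^{-1}(v)}a_{fe}=b_{s(f),v}$;
\item[(R3)] $\sum_{f\in r^{-1}(w)}a_{fe}=b_{w,r(e)}$.
\end{itemize}
The no-source hypothesis makes $r$ surjective, so (R1) well-defines $b_{wv}$ for every pair $(w,v)\in\mathcal{G}^0\times\mathcal{G}^0$.

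The plan is then to set $\mathrm{C}(G_{\textup{univ}}):=\mathrm{C}(\textup{S}_n^+)/I$, where $I$ is the smallest closed two-sided $*$-ideal of $\mathrm{C}(\textup{S}_n^+)$ enforcing (R1)--(R3) on the images of the $u_{fe}$. The central step is showing that $I$ is a Hopf ideal, i.e.\ $\Delta_{\textup{S}_n^+}(I)\subseteq I\otimes\mathrm{C}(\textup{S}_n^+)+\mathrm{C}(\textup{S}_n^+)\otimes I$; because $b_{wv}$ is not an independent generator but a specific expression in the $u_{fe}$, tracking its comultiplication forces all three relations to reinforce each other. For instance, modulo $I$ one computes
\[
\Delta_{\textup{S}_n^+}\Big(\sum_{e\in r^{-1}(v)}u_{fe}\Big)=\sum_g u_{fg}\otimes\sum_{e\in r^{-1}(v)}u_{ge}\equiv\sum_{w'}\Big(\sum_{g\in r^{-1}(w')}u_{fg}\Big)\otimes b_{w',v}\equiv\sum_{w'}b_{r(f),w'}\otimes b_{w',v}
\]
using (R1) twice, and parallel computations show that (R2) and (R3) are preserved, with each side of the (R3)-relation reducing to $\sum_x b_{wx}\otimes b_{x,r(e)}$. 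Granted this, $\mathrm{C}(G_{\textup{univ}})$ inherits a coassociative comultiplication with $\Delta_{G_{\textup{univ}}}(u_{fe})=\sum_g u_{fg}\otimes u_{ge}$ and $\Delta_{G_{\textup{univ}}}(b_{wv})=\sum_x b_{wx}\otimes b_{xv}$; the magic unitary identities pass to the quotient, so $G_{\textup{univ}}$ becomes a compact quantum group.

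The remaining verifications are routine. Define $\lambda_{\textup{univ}}(\delta_e):=\sum_f\delta_f\otimes u_{fe}$ and $\alpha_{\textup{univ}}(\delta_v):=\sum_w\delta_w\otimes b_{wv}$. The magic unitary property makes $\lambda_{\textup{univ}}$ a unital $*$-homomorphism and a faithful coaction; (R1) together with the magic unitary identities imply that the $b_{wv}$ are pairwise orthogonal projections with $\sum_v b_{wv}=1$, so $\alpha_{\textup{univ}}$ is a unital $*$-homomorphism; coassociativity of both actions is inherited from $\Delta_{G_{\textup{univ}}}$, and the Podle\'s conditions are automatic by finite dimensionality. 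Relations (R1), (R2), (R3) encode precisely the right-module, left-action and inner-product equivariance axioms of Definition \ref{def:equivcorr}, so $(G_{\textup{univ}},\alpha_{\textup{univ}},\lambda_{\textup{univ}})$ lies in $\mathcal{C}_{\textup{Bic}}^{\textup{mult}}(\mathcal{G})$. Universality is then immediate: for any object $(G,\alpha,\lambda)$, the coefficients of $\lambda$ form a magic unitary in $\mathrm{C}(G)$, giving by the universal property of $\textup{S}_n^+$ a $*$-homomorphism $\mathrm{C}(\textup{S}_n^+)\to\mathrm{C}(G)$ that annihilates $I$ by the first paragraph and hence descends to the required Hopf $*$-homomorphism $\mathrm{C}(G_{\textup{univ}})\to\mathrm{C}(G)$ intertwining the actions. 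The principal obstacle throughout is the Hopf-ideal verification: the computation closes up only because the two formulas for $b_{wv}$---one coming from $r$-fibre sums via (R1) and one from the inner-product relation (R3)---agree in the quotient.
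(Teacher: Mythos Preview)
The paper explicitly declines to prove this theorem: immediately after the statement it says ``We shall not give a proof of the above theorem as we are mainly interested in concrete examples of universal objects in the above category.'' So there is no proof in the paper to compare against; you have supplied an argument where the authors chose to omit one.

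Your approach is sound and is the natural one: realize $G_{\textup{univ}}$ as a quotient of $\textup{S}_n^+$ by the ideal encoding the correspondence compatibility, verify the ideal is a Hopf ideal, and read off universality. The key observations---that the forward direction of Lemma~\ref{lem:Ban_Bic} requires no simplicity hypothesis, that surjectivity of $r$ (the no-source condition) makes $b_{wv}$ well defined via (R1), and that the two expressions for $b_{wv}$ coming from (R1) and (R3) must be reconciled in the quotient---are all correct and well identified. The Hopf-ideal computation you sketch for (R1) is right, and the parallel ones for (R2) and (R3) do close up in the same way.

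Two small points worth tightening. First, (R1) together with the magic unitary relations gives only the \emph{row} properties of $(b_{wv})$ (orthogonality along $v$ and $\sum_v b_{wv}=1$); the column properties ($\sum_w b_{wv}=1$ and orthogonality along $w$) come from (R3), not (R1). You need both to conclude that $\alpha_{\textup{univ}}$ is a genuine $G$-$\mathrm{C}^*$-algebra action, so the sentence attributing this to (R1) alone should also invoke (R3). Second, you should record explicitly that the morphism $\mathrm{C}(G_{\textup{univ}})\to\mathrm{C}(G)$ intertwines $\alpha$ as well as $\lambda$: this holds because (R1) forces the image of your $b_{wv}$ to coincide with the coefficient of $\alpha$ in $\mathrm{C}(G)$, but it is part of the definition of a morphism in $\mathcal{C}_{\textup{Bic}}^{\textup{mult}}(\mathcal{G})$ and deserves a sentence.
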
 

We shall not give a proof of the above theorem as we are mainly interested in concrete examples of universal objects in the above category. In fact, we shall produce an example illustrating a general strategy to identify quantum subgroups of certain compact quantum groups. Also, we are aware that a notion of the quantum automorphism group of multigraphs is being formulated (\cite{HG2022}) and we believe that it will have some point of contact with the results obtained in this section. But we must wait for its final formulation to appear to say anything precise in this direction.
\begin{remark}
\label{rem:subobject_multiple edges}
We remark that for any object $(G,\alpha,\lambda)$ in the category $\clc_{\textup{Bic}}^{\textup{mult}}(\clg)$, $G$ is of Kac type. Also the $G$-action $\alpha$ preserves the restriction of the distinguished \textup{KMS} state $\varphi$ on $\mathrm{C}(\mathcal{G}^0)$. Therefore the universal object in the category $\clc_{\textup{Bic}}^{\textup{mult}}(\clg)$ is a subobject of the universal object in the category of compact quantum groups acting faithfully on $\mathrm{C}^*(\clg)$, preserving the distinguished \textup{KMS} state $\varphi$ at the critical inverse temperature, whenever $\varphi$ exists.
\end{remark}   

 Let $\clg$ be the graph with one vertex and $n$ number of loops at the vertex; $\clg^{\oplus m}$ be the disjoint union of $m$-copies of $\clg$. We note that the corresponding graph $\mathrm{C}^*$-algebra is $\oplus_{i=1}^{m}\clo_{n}$, where $\clo_{n}$ is the Cuntz algebra generated by $n$ number of partial isometries. So $\clg^{\oplus m}$ has $m$-vertices with each vertex emitting and receiving $n$ number of edges. We shall compute the universal object of the category $\clc_{\textup{Bic}}^{\textup{mult}}(\clg^{\oplus m})$.\\   

Let us introduce now some notations. We denote the vertex set of $\clg^{\oplus m}$ by $\mathcal{G}^0$; the edge set by $\mathcal{G}^1$, as usual. We denote the vertices by $\{v_{i} \mid i=1,\dots,m\}$ and edges by $\{e^{(i)}_{j} \mid i=1,\dots,m, j=1,\dots,n\}$. $e^{(i)}_{j}$ denotes the $j$-th loop in the $i$-th copy of $\clg$. We denote by $\delta_{i}$ the function on $\mathcal{G}^0$ that takes the value $1$ on $v_{i}$ and zero on the other vertices; we denote by $\xi_{i}^{(k)}$ the function on $\mathcal{G}^1$ that takes the value $1$ on $e^{(k)}_{i}$ and $0$ on the other edges. The result in the following theorem resembles the conclusion of \cite{Bichonwreath}*{Theorem 4.2}.

\begin{theorem}
\label{thm:qautwreath} 
Let us denote by $(G_{\textup{univ}},\alpha_{\textup{univ}},\lambda_{\textup{univ}})$ the universal object of the category $\clc_{\textup{Bic}}^{\textup{mult}}(\clg^{\oplus m})$. Then $G_{\textup{univ}}$ is isomorphic to $\textup{S}_{n}^{+}\wr_{\ast} \textup{S}^+_m$.
\end{theorem}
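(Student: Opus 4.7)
The plan is to exhibit the universal object explicitly as $G_{\textup{univ}}=\textup{S}_n^+\wr_\ast \textup{S}_m^+$ with its natural action on $\clg^{\oplus m}$. Recall that $\mathrm{C}(\textup{S}_n^+\wr_\ast \textup{S}_m^+)$ is generated by $m$ copies $(a^{(l)}_{ij})_{i,j=1,\dots,n}$ of the magic unitary of $\textup{S}_n^+$ (one for each $l=1,\dots,m$) and a magic unitary $(p_{lk})_{l,k=1,\dots,m}$ from $\textup{S}_m^+$, subject to the wreath commutation relations $a^{(l)}_{ij}p_{lk}=p_{lk}a^{(l)}_{ij}$. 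I propose the action $\alpha_{\textup{univ}}(\delta_{v_k})=\sum_l \delta_{v_l}\otimes p_{lk}$ and $\lambda_{\textup{univ}}(\xi^{(k)}_j)=\sum_{l,j'}\xi^{(l)}_{j'}\otimes a^{(l)}_{j'j}p_{lk}$. Verifying that this triple lies in $\clc_{\textup{Bic}}^{\textup{mult}}(\clg^{\oplus m})$ reduces to routine magic-unitary computations; for instance, $\lambda_{\textup{univ}}$ is a $*$-homomorphism because the commutation relation collapses $p_{lk}a^{(l)}_{j_1 j'}p_{lk'}=\delta_{kk'}a^{(l)}_{j_1j'}p_{lk}$. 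Faithfulness of $\lambda_{\textup{univ}}$ follows from $\sum_k a^{(l)}_{j'j}p_{lk}=a^{(l)}_{j'j}$ and $\sum_{j'}a^{(l)}_{j'j}p_{lk}=p_{lk}$, so the matrix coefficients generate $\mathrm{C}(G_{\textup{univ}})$.

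For universality, take any object $(G,\alpha,\lambda)$ and write $\alpha(\delta_{v_i})=\sum_k\delta_{v_k}\otimes b_{ki}$, $\lambda(\xi^{(k)}_j)=\sum_{l,j'}\xi^{(l)}_{j'}\otimes c^{lk}_{j'j}$. Both $\alpha$ and $\lambda$ being unital $*$-homomorphisms into a $\mathrm{C}^*$-tensor product with a finite-dimensional commutative factor forces $(b_{ki})$ and $(c^{lk}_{j'j})$ to be magic unitaries. Applying the correspondence conditions $\lambda(\phi(\delta_{v_i})\xi^{(k)}_j)=(\phi\otimes\mathrm{id})(\alpha(\delta_{v_i}))\lambda(\xi^{(k)}_j)$ and $\lambda(\xi^{(k)}_j\delta_{v_i})=\lambda(\xi^{(k)}_j)\alpha(\delta_{v_i})$ and comparing coefficients yields the key compatibility
\[
b_{li}\, c^{lk}_{j'j}=c^{lk}_{j'j}\,b_{li}=\delta_{ik}\,c^{lk}_{j'j},
\]
so in particular $c^{lk}_{j'j}\leq b_{lk}$.

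The crucial step is to upgrade this to the equality $\sum_{j'}c^{lk}_{j'j}=b_{lk}$. Fix $l,k,j$ and set $q_l:=\sum_{j'}c^{lk}_{j'j}$; by row-orthogonality of $c$ in the index $j'$, $q_l$ is a projection, and the inequality above gives $q_l\leq b_{lk}$. Since $\{b_{lk}\}_l$ are mutually orthogonal, so are the $q_l$; moreover the column sums $\sum_l q_l=\sum_{l,j'}c^{lk}_{j'j}=1=\sum_l b_{lk}$ coincide, whence $\sum_l(b_{lk}-q_l)=0$ is a sum of positive orthogonal elements and each $q_l=b_{lk}$. Set $\tilde a^{(l)}_{j'j}:=\sum_k c^{lk}_{j'j}$. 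A direct verification shows $(\tilde a^{(l)}_{j'j})_{j',j}$ is a magic unitary for each $l$, with the column sum $\sum_{j'}\tilde a^{(l)}_{j'j}=\sum_k b_{lk}=1$ relying precisely on the upgraded equality. The compatibility identities then give $\tilde a^{(l)}_{j'j}\,b_{lk}=b_{lk}\,\tilde a^{(l)}_{j'j}=c^{lk}_{j'j}$, which is exactly the wreath commutation relation.

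By the universal property of $\textup{S}_n^+\wr_\ast \textup{S}_m^+$, the assignments $a^{(l)}_{ij}\mapsto\tilde a^{(l)}_{ij}$ and $p_{lk}\mapsto b_{lk}$ extend to a unital $*$-homomorphism $f:\mathrm{C}(G_{\textup{univ}})\to\mathrm{C}(G)$. To see that $f$ is a Hopf $*$-homomorphism I compute $\Delta(c^{lk}_{j'j})=\sum_{l_1,j_1}c^{ll_1}_{j'j_1}\otimes c^{l_1k}_{j_1j}$ from coassociativity of $\lambda$, substitute $c^{ll_1}_{j'j_1}=\tilde a^{(l)}_{j'j_1}b_{ll_1}$, and match it with the wreath-product coproduct on $a^{(l)}_{ij}p_{lk}$. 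The intertwining identities $(\mathrm{id}\otimes f)\circ\alpha_{\textup{univ}}=\alpha$ and $(\mathrm{id}\otimes f)\circ\lambda_{\textup{univ}}=\lambda$ are then immediate on generators, and uniqueness of $f$ follows since the generators determine it. The main obstacle is the equality $q_l=b_{lk}$: without it, $(\tilde a^{(l)}_{j'j})$ would fail to have column sums equal to $1$, and the morphism into $\mathrm{C}(G)$ could not be defined.
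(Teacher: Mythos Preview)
Your proof is correct and reaches the same destination as the paper, but the route is genuinely different at the technical core. The paper first invokes the intertwining relation $(r_\ast\otimes\mathrm{id})\circ\alpha=\lambda\circ r_\ast$ (a consequence of Lemma~\ref{lem:Ban_Bic}) to read off directly that $x_{lk}=\sum_i q_{l(j)k(i)}$, and then works explicitly with the antipode $\kappa$ (using $\kappa(q_{l(j)k(i)})=q_{k(i)l(j)}$) to transfer row identities to column identities and to establish the remaining orthogonality relations. You bypass both of these devices: instead of invoking $r_\ast$, you extract from the bimodule compatibility $b_{li}c^{lk}_{j'j}=\delta_{ik}c^{lk}_{j'j}$ the inequality $c^{lk}_{j'j}\le b_{lk}$, and then use the elementary positivity trick $\sum_l(b_{lk}-q_l)=0$ with $b_{lk}-q_l\ge 0$ to upgrade it to the equality $\sum_{j'}c^{lk}_{j'j}=b_{lk}$. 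This is the dual identity to the paper's (one sums over the first inner index, the other over the second), and each determines the other via the antipode, but your derivation never touches $\kappa$ explicitly---it is hidden in your appeal to the column sums of the magic unitary $(c^{lk}_{j'j})$, which does require the full $G$-$\mathrm{C}^*$-algebra structure (coassociativity and Podle\'s), not merely that $\lambda$ is a unital $*$-homomorphism as your phrasing suggests. The upshot: your argument is more self-contained and order-theoretic, while the paper's is more structural, leaning on the graph maps $r_\ast$, $s_\ast$ and the Hopf-algebraic antipode; both buy the same wreath-product relations in the end.
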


\begin{proof}
We begin by introducing some more notations. We shall denote the generators of $\textup{S}^{+}_{n}$ by $u_{ij}$, where $i,j=1,\dots,n$ and the generators of $\textup{S}^{+}_{m}$ by $v_{kl}$, where $k,l=1,\dots,m$. We also denote the embedding of the $l$-th copy of $\textup{S}^{+}_{n}$ in $\textup{S}^{+}_{n}\underbrace{\ast\dots\ast}_{m\textup{-times}}\textup{S}^{+}_{n}$ by $\nu_{l}$. With these notations, the compact quantum group $\textup{S}^{+}_{n}\wr_{\ast} \textup{S}^{+}_{m}$ can be made into an object of the category $\clc_{\textup{Bic}}^{\textup{mult}}(\clg^{\oplus m})$ as follows. We define
\begin{align}
\lambda_{\textup{Bic}}(\xi^{i}_{(k)})=\sum_{j,l}\xi_{(l)}^{j}\ot \nu_{l}(u_{ij})v_{lk},
\end{align}for $i=1,\dots, n$ and $k=1,\dots,m$; and
\begin{align}
\alpha_{\textup{Bic}}(\delta_{k})=\sum_{l=1}^{m}\delta_{l}\ot v_{lk}, 
\end{align}for $k=1,\dots,m$. We leave the proof of the fact that $(\textup{S}^{+}_{n}\wr_{\ast} \textup{S}^{+}_{m},\alpha_{\textup{Bic}},\lambda_{\textup{Bic}})$ is indeed an object of the category $\clc_{\textup{Bic}}^{\textup{mult}}(\clg^{\oplus m})$ to the reader; see also \cite{Bichonwreath}. We only prove that the triple $(\textup{S}^{+}_{n}\wr_{\ast} \textup{S}^{+}_{m},\alpha_{\textup{Bic}},\lambda_{\textup{Bic}})$ is the universal object in the category $\clc_{\textup{Bic}}^{\textup{mult}}(\clg^{\oplus m})$. 

To that end, let $(G,\alpha,\lambda)$ be an object of the category $\clc_{\textup{Bic}}^{\textup{mult}}(\clg^{\oplus m})$. Let, moreover, $\lambda$ and $\alpha$ be given by
\begin{align}
\lambda(\xi_{(k)}^{i})=\sum_{j,l}\xi_{(l)}^{j}\ot q_{l(j)k(i)}
\end{align}
\begin{align}
\alpha(\delta_{k})=\sum_{l}\delta_{l}\ot x_{lk},
\end{align}with the notations as in the paragraph just above the statement of the theorem. We note that since $(\mathrm{C}(\mathcal{G}^1),\lambda)$ is a $G$-$\mathrm{C}^*$-algebra, we have the following relations.
\begin{align}
\kappa(q_{l(j)k(i)})=q_{k(i)l(j)},
\end{align}
\begin{align}\label{eq:7.4}
\sum_{k,i}q_{l(j)k(i)}=1,
\end{align}where $\kappa$ denotes the antipode of $G$. Also, since, 
\[r_{\ast}(\delta_{k})=\sum_{i=1}^{n}\xi^{(k)}_{i}\]for all $k=1,\dots,m$ and \[(r_{\ast}\otimes \mathrm{id}_{\mathrm{C}(G)})\alpha(\delta_{k})=\lambda(r_{\ast}(\delta_{k})),\]we have, for $k,l=1,\dots,m$,
\[
x_{lk}=\sum_{i=1}^{n}q_{l(j)k(i)},
\]for all $j=1,\dots,n$. Clearly $x_{lk}$, for $k,l=1,\dots,m$ satisfy the relations in $\textup{S}_{m}^{+}$. So in particular, 
\[\sum_{l=1}^{m}x_{lk}=1.\]We now define \[u_{ji}^{(l)}=\sum_{k=1}^{m}q_{l(j)k(i)},\] for $l=1,\dots,m$. Hence by \eqref{eq:7.4}, for each $l=1,\dots,m$, 
\begin{align} 
\sum_{i=1}^{n}u^{(l)}_{ji}=1.
\end{align}Now, using \[\sum_{l}x_{lk}=1,\] we get \[\sum_{l,i}q_{l(j)k(i)}=1,\]which after applying $\kappa$ gives us
\[
\sum_{l,i} q_{k(i)l(j)}=1,
\]i.e., after interchanging $k$ with $l$ and $i$ with $j$,
\begin{align}
\sum_{j=1}^{n}u^{(l)}_{ji}=1,
\end{align}for all $i,l$. An easy calculation, using \[\xi^{k}_{i}\xi^{k^{\prime}}_{i^{\prime}}=\delta_{k,k^{\prime}}\delta_{i,i^{\prime}}\xi^{(k)}_{i},\] in the $\mathrm{C}^*$-algebra $\mathrm{C}(\mathcal{G}^1)$ reveals that for all $i,j,k,l,k^{\prime},i^{\prime}$,
\begin{align}
\label{eq:14} 
q_{l(j)k(i)}q_{l(j)k^{\prime}(i^{\prime})}=\delta_{k,k^{\prime}}\delta_{i,i^{\prime}}q_{l(j)k(i)}.
\end{align}Hence, for all $l=1,\dots,m$,
\begin{align}
u^{(l)}_{ji}u^{(l)}_{ji^{\prime}}=\delta_{i,i^{\prime}}u^{(l)}_{ji}.\end{align}Now for $k \neq l$, \[\xi^{(k)}_{i}\delta_{l}=0,\]for all $i=1,\dots,n$. Therefore
\begin{align*}
\lambda(\xi^{(k)}_{i}\delta_{l})=0,
\end{align*}which implies,
\begin{align*}
(\sum_{p,r}\xi^{(p)}_{r}\ot q_{p(r)k(i)})(\sum_{s}\delta_{s}\ot x_{sl})=0,
\end{align*}i.e.,
\begin{align*}
(\sum_{p,r}\xi^{(p)}_{r}\ot q_{p(r)k(i)}x_{pl})=0.
\end{align*}Hence we have \[q_{p(r)k(i)}(\sum_{i^{\prime}}q_{p(j)l(i^{\prime})})=0.\]For $j=r$, as $k \neq l$, \[q_{p(r)k(i)}q_{p(j)l(i^{\prime})}=0\] for all $i^{\prime}$. Therefore, for $j \neq r$,
\begin{align}
\label{eq:15} 
q_{p(r)k(i)}(\sum_{i^{\prime}}q_{p(j)l(i^{\prime})})=0.
\end{align}Using \[q_{p(j)l(i^{\prime})}q_{p(j)l(i^{\prime\prime})}=\delta_{i^{\prime},i^{\prime\prime}}q_{p(j)l(i^{\prime})}\] for all $p,j,l,i^{\prime}$, we get
\begin{align}
\label{eq:16}
q_{p(r)k(i)}q_{p(j)l(i)}=0,
\end{align}whenever $k \neq l$ and $j \neq r$. Now for any $l=1,\dots,m$ and any $i,r,s=1,\dots,n$,
\begin{align*}
u^{(l)}_{ri}u^{(l)}_{si}=\sum_{k,k^{\prime}}q_{l(r)k(i)}q_{l(s)k^{\prime}(i)},
\end{align*}which, for $r=s$ is equal to \[\sum_{k}q_{l(r)k(i)}.\]Hence, we have that \[(u^{(l)}_{ri})^{2}=u^{(l)}_{ri}.\] For $r \neq s$, using \eqref{eq:16}, we get
\begin{align*}
u^{(l)}_{ri}u^{(l)}_{si}=\sum_{k=1}^{m}q_{l(r)k(i)}q_{l(s)k(i)}.
\end{align*}From \eqref{eq:14}, for any $l,r,k,i,s$ with $r \neq s$,
\[
q_{k(i)l(r)}q_{k(i)l(s)}=0.
\]Applying $\kappa$ to the above identity, we deduce that \[q_{l(s)k(i)}q_{l(r)k(i)}=0\] for all $i,k,l$ whenever $r \neq s$. Therefore, 
\[
u^{(l)}_{ri}u^{(l)}_{si}=\delta_{r,s}u^{(l)}_{ri}, 
\]for all $l=1,\dots,m$ and for all $i,r,s=1,\dots,n$. So for each $l=1,\dots,m$, $u^{(l)}_{ji}$, $i,j=1,\dots,n$ satisfy
\begin{align}
\begin{aligned}
\sum_{i=1}^{n}u^{(l)}_{ij}={}&\sum_{j=1}^{n}u^{(l)}_{ij}=1;\\
\ u^{(l)}_{ij}u_{ik}^{(l)}={}&\delta_{j,k}u^{}_{ij};\\
u^{(l)}_{kj}u^{(l)}_{ij}={}&\delta_{k,i}u_{ij}^{(l)}. 
\end{aligned}
\end{align}Also $x_{kl}$ for $k,l=1,\dots,m$ satisfy the relations of $\textup{S}_{m}^{+}$. Then by the universal property of the free product, we get a Hopf $*$-homomorphism
\[f : \mathrm{C}(\textup{S}^+_n\underbrace{\ast\dots\ast}_{m\textup{-times}}\textup{S}_{n}^{+}\ast \textup{S}^+_m) \raro \mathrm{C}(G)\] such that \[f(\nu_{l}(u_{ij}))=u^{(l)}_{ij}\] and \[f(v_{kl})=x_{kl}\] where, we recall that $u_{ij}$ for $i,j=1,\dots,n$ are the generators of $\textup{S}^+_n$ and $v_{kl}$ for $k,l=1,\dots,m$ are the generators of $\textup{S}^+_m$. Moreover,
\begin{align}
\begin{aligned}
f(\nu_{l}(u_{ij})v_{lk})={}& u^{(l)}_{ij}x_{lk}\\
={}&(\sum_{k^{\prime},i^{\prime}}q_{l(i)k^{\prime}(j)}q_{l(i)k(i^{\prime})})\\
={}& q_{l(i)k(j)}.
\end{aligned}
\end{align}But $f(v_{lk}\nu_{l}(u_{ij}))$ is also $q_{l(i)k(j)}$. So by the definition of the free wreath product, $f$ descends to a surjective Hopf $*$-homomorphism \[f : \mathrm{C}(\textup{S}^+_n\wr_{\ast} \textup{S}^+_m) \rightarrow \mathrm{C}(G),\] intertwining the actions. Hence $(\textup{S}^+_n\wr_{\ast} \textup{S}^+_m,\alpha_{\textup{Bic}},\lambda_{\textup{Bic}})$ is the universal object in the category $\mathcal{C}_{\textup{Bic}}^{\textup{mult}}(\clg^{\oplus m})$, and this completes the proof.
\end{proof}

\begin{remarks}\hfill
\label{rem:qsubgroupremark}
\begin{itemize}
	\item For $m=1$, the graph $\clg^{\oplus 1}$ is the graph with one vertex and $n$ number of loops attached with it. The universal object in the category $\clc_{\textup{Bic}}^{\textup{mult}}(\clg^{\oplus 1})$ is $\textup{S}^+_n$ by Theorem \ref{thm:qautwreath}. Now the universal object in the category of compact quantum groups acting faithfully on $\clo_{n}$ and preserving the \textup{KMS} state, which is a distinguished one, is the free unitary quantum group $\textup{U}_{n}^{+}$. Hence by Remark \ref{rem:subobject_multiple edges}, $\textup{S}^+_n$ is realized as a quantum subgroup of $\textup{U}^{+}_{n}$.
	\item Similarly, for $n=1$, the graph $\clg^{\oplus m}$ is the disjoint union of $m$-loops. By \cite{Joardar1}*{Proposition 4.2}, the quantum automorphism group of the graph $\mathrm{C}^*$-algebra corresponding to the graph $\clg^{\oplus m}$ preserving the distinguished \textup{KMS} state is $\textup{H}_{m}^{\infty +}$. Hence again by Remark \ref{rem:subobject_multiple edges}, the universal object in the category  $\clc_{\textup{Bic}}^{\textup{mult}}(\clg^{\oplus m})$  realizes $\textup{S}^+_m$ as a quantum subgroup of $\textup{H}^{\infty +}_{m}$.
\end{itemize}  
\end{remarks}

\begin{bibdiv}
\begin{biblist}
	\bib{karen}{article}{
		author={Adamo, M.S.},
		author={Archey, D.E.},
		author={Georgescu, M.C.},
		author={Forough, M.},
		author={Jeong, J. A},
		author={Strung, K.R.},
		author={Viola, M.G.},
		title={$\mathrm{C}^*$-algebras associated to homeomorphisms twisted by vector bundles over finite dimensional spaces},
		date={2022},
		eprint={https://arxiv.org/abs/2202.10311},
	}
	\bib{Abadie2009}{article}{
   author={Abadie, B.},
   author={Achigar, M.},
   title={Cuntz-Pimsner $\mathrm{C}^*$-algebras and crossed products by Hilbert
   $\mathrm{C}^*$-bimodules},
   journal={Rocky Mountain J. Math.},
   volume={39},
   date={2009},
   number={4},
   pages={1051--1081},
   issn={0035-7596},
   review={\MR{2524704}},
   doi={10.1216/RMJ-2009-39-4-1051},
}
	\bib{Abadie1998}{article}{
   author={Abadie, B.},
   author={Eilers, S.},
   author={Exel, R.},
   title={Morita equivalence for crossed products by Hilbert
   $\mathrm{C}^*$-bimodules},
   journal={Trans. Amer. Math. Soc.},
   volume={350},
   date={1998},
   number={8},
   pages={3043--3054},
   issn={0002-9947},
   review={\MR{1467459}},
   doi={10.1090/S0002-9947-98-02133-3},
}
	\bib{Laca}{article}{
		author={an Huef, A.},
		author={Laca, M.},
		author={Raeburn, I.},
		author={Sims, A.},
		title={KMS states on the $\mathrm{C}^*$-algebras of finite graphs},
		journal={J. Math. Anal. Appl.},
		volume={405},
		date={2013},
		number={2},
		pages={388--399},
		issn={0022-247X},
		review={\MR{3061018}},
		doi={10.1016/j.jmaa.2013.03.055},
	 }
	\bib{Gysin}{article}{
		author={Arici, F.},
		author={Kaad, J.},
		author={Landi, G.},
		title={Pimsner algebras and Gysin sequences from principal circle
		actions},
		journal={J. Noncommut. Geom.},
		volume={10},
		date={2016},
		number={1},
		pages={29--64},
		issn={1661-6952},
		review={\MR{3500816}},
		doi={10.4171/JNCG/228},
	 }
	 \bib{Ban}{article}{
		author={Banica, T.},
		title={Quantum automorphism groups of homogeneous graphs},
		journal={J. Funct. Anal.},
		volume={224},
		date={2005},
		number={2},
		pages={243--280},
		issn={0022-1236},
		review={\MR{2146039}},
		doi={10.1016/j.jfa.2004.11.002},
	 }
	 \bib{banicametric}{article}{
author={Banica, T.},
title={Quantum automorphism groups of small metric spaces},
date={2005},
issn={0030-8730},
journal={Pacific J. Math.},
volume={219},
number={1},
pages={27\ndash 51},
doi={10.2140/pjm.2005.219.27},
review={\MR {2174219}},
}
\bib{bcg2022}{article}{
author={Bhattacharjee, S.},
author={Chirvasitu, A.},
author={Goswami, D.},
title={Quantum Galois groups of subfactors},
journal={Internat. J. Math.},
date={2022},
pages={2250013},
doi={10.1142/S0129167X22500136},
}
\bib{BG2009a}{article}{
author={Bhowmick, J.},
author={Goswami, D.},
title={Quantum group of orientation-preserving {R}iemannian isometries},
date={2009},
issn={0022-1236},
journal={J. Funct. Anal.},
volume={257},
number={8},
pages={2530\ndash 2572},
doi={10.1016/j.jfa.2009.07.006},
review={\MR {2555012}},
}
\bib{bg2019}{article}{
author={Bhattacharjee, S.},
author={Goswami, D.},
title={Hopf coactions on odd spheres},
journal={J. Algebra},
volume={539},
date={2019},
pages={305--325},
issn={0021-8693},
review={\MR{3997618}},
doi={10.1016/j.jalgebra.2019.08.012},
}
	 \bib{Bichon}{article}{
		author={Bichon, J.},
		title={Quantum automorphism groups of finite graphs},
		journal={Proc. Amer. Math. Soc.},
		volume={131},
		date={2003},
		number={3},
		pages={665--673},
		issn={0002-9939},
		review={\MR{1937403}},
		doi={10.1090/S0002-9939-02-06798-9},
	 }
	 \bib{Bichonwreath}{article}{
		author={Bichon, J.},
		title={Free wreath product by the quantum permutation group},
		journal={Algebr. Represent. Theory},
		volume={7},
		date={2004},
		number={4},
		pages={343--362},
		issn={1386-923X},
		review={\MR{2096666}},
		doi={10.1023/B:ALGE.0000042148.97035.ca},
	 }
\bib{BKQR2015}{article}{
   author={B\'{e}dos, E.},
   author={Kaliszewski, S.},
   author={Quigg, J.},
   author={Robertson, D.},
   title={A new look at crossed product correspondences and associated
   $C^*$-algebras},
   journal={J. Math. Anal. Appl.},
   volume={426},
   date={2015},
   number={2},
   pages={1080--1098},
   issn={0022-247X},
   review={\MR{3314880}},
   doi={10.1016/j.jmaa.2015.01.055},
}
	 \bib{bmrs2019}{article}{
author={Bhowmick, J.},
author={Mandal, A.},
author={Roy, S.},
author={Skalski, A.},
title={Quantum symmetries of the twisted tensor products of $\mathrm{C}^*$-algebras},
journal={Comm. Math. Phys.},
volume={368},
date={2019},
number={3},
pages={1051--1085},
issn={0010-3616},
review={\MR{3951700}},
doi={10.1007/s00220-018-3279-5},
}
	 \bib{br}{book}{
		author={Bratteli, O.},
		author={Robinson, D.W.},
		title={Operator algebras and quantum statistical mechanics. 2},
		series={Texts and Monographs in Physics},
		edition={2},
		note={Equilibrium states. Models in quantum statistical mechanics},
		publisher={Springer-Verlag, Berlin},
		date={1997},
		pages={xiv+519},
		isbn={3-540-61443-5},
		review={\MR{1441540}},
		doi={10.1007/978-3-662-03444-6},
	}
	\bib{BS}{article}{
   		author={Baaj, S.},
   		author={Skandalis, G.},
   		title={$\mathrm{C}^*$-alg\`ebres de Hopf et th\'eorie de Kasparov \'equivariante},
   		language={French, with English summary},
   		journal={$K$-Theory},
   		volume={2},
   		date={1989},
   		number={6},
   		pages={683--721},
   		issn={0920-3036},
   		review={\MR{1010978}},
   		doi={10.1007/BF00538428},
	}
	\bib{bs2013}{article}{
author={Banica, T.},
author={Skalski, A.},
title={Quantum symmetry groups of $\mathrm{C}^*$-algebras equipped with
orthogonal filtrations},
journal={Proc. Lond. Math. Soc. (3)},
volume={106},
date={2013},
number={5},
pages={980--1004},
issn={0024-6115},
review={\MR{3066746}},
doi={10.1112/plms/pds071},
}
\bib{BV2009}{article}{
   author={Banica, T.},
   author={Vergnioux, R.},
   title={Fusion rules for quantum reflection groups},
   journal={J. Noncommut. Geom.},
   volume={3},
   date={2009},
   number={3},
   pages={327--359},
   issn={1661-6952},
   review={\MR{2511633}},
   doi={10.4171/JNCG/39},
}
	\bib{connes}{book}{
   author={Connes, A.},
   title={Noncommutative geometry},
   publisher={Academic Press, Inc., San Diego, CA},
   date={1994},
   pages={xiv+661},
   isbn={0-12-185860-X},
   review={\MR{1303779}},
}
	 \bib{Cuntz-Krieger}{article}{
		author={Cuntz, J.},
		author={Krieger, W.},
		title={A class of $\mathrm{C}^*$-algebras and topological Markov chains},
		journal={Invent. Math.},
		volume={56},
		date={1980},
		number={3},
		pages={251--268},
		issn={0020-9910},
		review={\MR{561974}},
		doi={10.1007/BF01390048},
	 }
\bib{de-commer:action}{article}{
   author={De Commer, K.},
   title={Actions of compact quantum groups},
   conference={
      title={Topological quantum groups},
   },
   book={
      series={Banach Center Publ.},
      volume={111},
      publisher={Polish Acad. Sci. Inst. Math., Warsaw},
   },
   isbn={978-83-86806-35-5},
   date={2017},
   pages={33--100},
   review={\MR{3675047}},
}
	 \bib{Fulton}{book}{
		author={Fulton, M.B.},
		title={The quantum automorphism group and undirected trees},
		note={Thesis (Ph.D.)--Virginia Polytechnic Institute and State
		University},
		publisher={ProQuest LLC, Ann Arbor, MI},
		date={2006},
		pages={74},
		isbn={978-0542-95772-7},
		review={\MR{2709592}},
	 }
\bib{G2014}{article}{
   author={Gabriel, O.},
   title={Fixed points of compact quantum groups actions on Cuntz algebras},
   journal={Ann. Henri Poincar\'{e}},
   volume={15},
   date={2014},
   number={5},
   pages={1013--1036},
   issn={1424-0637},
   review={\MR{3192657}},
   doi={10.1007/s00023-013-0265-5},
}
	 \bib{GJ2018}{article}{
author={Goswami, D.},
author={Joardar, S.},
title={Non-existence of faithful isometric action of compact quantum groups on compact, connected {R}iemannian manifolds},
date={2018},
issn={1016-443X},
journal={Geom. Funct. Anal.},
volume={28},
number={1},
pages={146\ndash 178},
doi={10.1007/s00039-018-0437-z},
review={\MR {3777415}},
}
\bib{goswamiadv}{article}{
author={Goswami, D.},
title={Non-existence of genuine (compact) quantum symmetries of compact, connected smooth manifolds},
date={2020},
issn={0001-8708},
journal={Adv. Math.},
volume={369},
pages={107181, 25},
doi={10.1016/j.aim.2020.107181},
review={\MR {4092984}},
}
\bib{GW2016}{article}{
   author={Gabriel, O.},
   author={Weber, M.},
   title={Fixed point algebras for easy quantum groups},
   journal={SIGMA Symmetry Integrability Geom. Methods Appl.},
   volume={12},
   date={2016},
   pages={Paper No. 097, 21},
   review={\MR{3552791}},
   doi={10.3842/SIGMA.2016.097},
}
\bib{HG2022}{article}{
author={Hossain, Sk. A.},
author={Goswami, D.},
title={private communication},
}
	 \bib{haong}{article}{
   author={Hao, G.},
   author={Ng, C.K.},
   title={Crossed products of $\mathrm{C}^*$-correspondences by amenable group
   actions},
   journal={J. Math. Anal. Appl.},
   volume={345},
   date={2008},
   number={2},
   pages={702--707},
   issn={0022-247X},
   review={\MR{2429169}},
   doi={10.1016/j.jmaa.2008.04.058},
}
	 \bib{idaqp}{article}{
		author={Joardar, S.},
		author={Mandal, A.},
		title={Quantum symmetry of graph $\mathrm{C}^*$-algebras associated with connected
		graphs},
		journal={Infin. Dimens. Anal. Quantum Probab. Relat. Top.},
		volume={21},
		date={2018},
		number={3},
		pages={1850019, 18},
		issn={0219-0257},
		review={\MR{3860252}},
		doi={10.1142/S0219025718500194},
	 }
	 \bib{Joardar1}{article}{
		author={Joardar, S.},
		author={Mandal, A.},
		title={Quantum symmetry of graph $\mathrm{C}^*$-algebras at critical inverse
		temperature},
		journal={Studia Math.},
		volume={256},
		date={2021},
		number={1},
		pages={1--20},
		issn={0039-3223},
		review={\MR{4159553}},
		doi={10.4064/sm190102-30-9},
	 }
	 \bib{Joardar2}{article}{
		author={Joardar, S.},
		author={Mandal, A.},
		title={Invariance of KMS states on graph $\mathrm{C}^*$-algebras under classical
		and quantum symmetry},
		journal={Proc. Edinb. Math. Soc. (2)},
		volume={64},
		date={2021},
		number={4},
		pages={762--778},
		issn={0013-0915},
		review={\MR{4349409}},
		doi={10.1017/S0013091521000390},
	 }
	 \bib{kasparov}{article}{
   author={Kasparov, G.G.},
   title={Equivariant $KK$-theory and the Novikov conjecture},
   journal={Invent. Math.},
   volume={91},
   date={1988},
   number={1},
   pages={147--201},
   issn={0020-9910},
   review={\MR{918241}},
   doi={10.1007/BF01404917},
}
\bib{K2017}{article}{
   author={Katsoulis, E.G.},
   title={${\rm C}^*$ envelopes and the Hao-Ng isomorphism for discrete
   groups},
   journal={Int. Math. Res. Not. IMRN},
   date={2017},
   number={18},
   pages={5751--5768},
   issn={1073-7928},
   review={\MR{3704745}},
   doi={10.1093/imrn/rnw187},
}
	 \bib{katsura2003}{article}{
   author={Katsura, T.},
   title={A construction of $\mathrm{C}^*$-algebras from $\mathrm{C}^*$-correspondences},
   conference={
      title={Advances in quantum dynamics},
      address={South Hadley, MA},
      date={2002},
   },
   book={
      series={Contemp. Math.},
      volume={335},
      publisher={Amer. Math. Soc., Providence, RI},
   },
   date={2003},
   pages={173--182},
   review={\MR{2029622}},
   doi={10.1090/conm/335/06007},
}
	 \bib{katsura}{article}{
   author={Katsura, T.},
   title={A class of $\mathrm{C}^*$-algebras generalizing both graph algebras and
   homeomorphism $\mathrm{C}^*$-algebras. I. Fundamental results},
   journal={Trans. Amer. Math. Soc.},
   volume={356},
   date={2004},
   number={11},
   pages={4287--4322},
   issn={0002-9947},
   review={\MR{2067120}},
   doi={10.1090/S0002-9947-04-03636-0},
}
	 \bib{katsura1}{article}{
		author={Katsura, T.},
		title={On $\mathrm{C}^*$-algebras associated with $\mathrm{C}^*$-correspondences},
		journal={J. Funct. Anal.},
		volume={217},
		date={2004},
		number={2},
		pages={366--401},
		issn={0022-1236},
		review={\MR{2102572}},
		doi={10.1016/j.jfa.2004.03.010},
	 }
	 \bib{katideal}{article}{
   author={Katsura, T.},
   title={Ideal structure of $\mathrm{C}^*$-algebras associated with
   $\mathrm{C}^*$-correspondences},
   journal={Pacific J. Math.},
   volume={230},
   date={2007},
   number={1},
   pages={107--145},
   issn={0030-8730},
   review={\MR{2413377}},
   doi={10.2140/pjm.2007.230.107},
}
	 \bib{Kim}{article}{
		author={Kim, D.},
		title={Coactions of Hopf $\mathrm{C}^*$-algebras on Cuntz-Pimsner algebras},
		date={2014},
		eprint={https://arxiv.org/abs/1407.6106},
	 }
	 \bib{KPW}{article}{
		author={Kajiwara, T.},
		author={Pinzari, C.},
		author={Watatani, Y.},
		title={Ideal structure and simplicity of the $\mathrm{C}^*$-algebras generated
		by Hilbert bimodules},
		journal={J. Funct. Anal.},
		volume={159},
		date={1998},
		number={2},
		pages={295--322},
		issn={0022-1236},
		review={\MR{1658088}},
		doi={10.1006/jfan.1998.3306},
	 }
	 \bib{kalis}{article}{
		author={Kaliszewski, S.},
		author={Quigg, J.},
		author={Robertson, D.},
		title={Coactions on Cuntz-Pimsner algebras},
		journal={Math. Scand.},
		volume={116},
		date={2015},
		number={2},
		pages={222--249},
		issn={0025-5521},
		review={\MR{3367428}},
		doi={10.7146/math.scand.a-21161},
	 }
	 \bib{Watatani}{article}{
		author={Kajiwara, T.},
		author={Watatani, Y.},
		title={KMS states on finite graph $\mathrm{C}^*$-algebras},
		date={2010},
		eprint={https://arxiv.org/abs/1007.4248},
	 }
	 \bib{Lance}{book}{
		author={Lance, E.C.},
		title={Hilbert $\mathrm{C}^*$-modules},
		series={London Mathematical Society Lecture Note Series},
		volume={210},
		note={A toolkit for operator algebraists},
		publisher={Cambridge University Press, Cambridge},
		date={1995},
		pages={x+130},
		isbn={0-521-47910-X},
		review={\MR{1325694}},
		doi={10.1017/CBO9780511526206},
	 }
	 \bib{Neshveyev}{article}{
		author={Laca, M.},
		author={Neshveyev, S.},
		title={KMS states of quasi-free dynamics on Pimsner algebras},
		journal={J. Funct. Anal.},
		volume={211},
		date={2004},
		number={2},
		pages={457--482},
		issn={0022-1236},
		review={\MR{2056837}},
		doi={10.1016/j.jfa.2003.08.008},
	 }
	 \bib{Van}{article}{
		author={Maes, A.},
		author={Van Daele, A.},
		title={Notes on compact quantum groups},
		journal={Nieuw Arch. Wisk. (4)},
		volume={16},
		date={1998},
		number={1-2},
		pages={73--112},
		issn={0028-9825},
		review={\MR{1645264}},
	 }
	 \bib{muhly}{article}{
   author={Muhly, P.S.},
   author={Solel, B.},
   title={Tensor algebras over $\mathrm{C}^*$-correspondences: representations,
   dilations, and $\mathrm{C}^*$-envelopes},
   journal={J. Funct. Anal.},
   volume={158},
   date={1998},
   number={2},
   pages={389--457},
   issn={0022-1236},
   review={\MR{1648483}},
   doi={10.1006/jfan.1998.3294},
}
	 \bib{nt2013}{book}{
   		author={Neshveyev, S.},
   		author={Tuset, L.},
   		title={Compact quantum groups and their representation categories},
   		series={Cours Sp\'{e}cialis\'{e}s [Specialized Courses]},
   		volume={20},
   		publisher={Soci\'{e}t\'{e} Math\'{e}matique de France, Paris},
   		date={2013},
   		pages={vi+169},
   		isbn={978-2-85629-777-3},
   		review={\MR{3204665}},
	}
\bib{P1997}{article}{
   author={Paolucci, A.},
   title={Coactions of Hopf algebras on Cuntz algebras and their fixed point
   algebras},
   journal={Proc. Amer. Math. Soc.},
   volume={125},
   date={1997},
   number={4},
   pages={1033--1042},
   issn={0002-9939},
   review={\MR{1350959}},
   doi={10.1090/S0002-9939-97-03595-8},
}
	 \bib{pedersen}{book}{
		author={Pedersen, G.K.},
		title={$\mathrm{C}^*$-algebras and their automorphism groups},
		series={Pure and Applied Mathematics (Amsterdam)},
		note={Second edition of [ MR0548006];
		Edited and with a preface by S\o ren Eilers and Dorte Olesen},
		publisher={Academic Press, London},
		date={2018},
		pages={xviii+520},
		isbn={978-0-12-814122-9},
		review={\MR{3839621}},
	 }
	 \bib{Pimsner}{article}{
		author={Pimsner, M.V.},
		title={A class of $\mathrm{C}^*$-algebras generalizing both Cuntz-Krieger algebras
		and crossed products by ${\bf Z}$},
		conference={
		title={Free probability theory},
		address={Waterloo, ON},
		date={1995},
		},
		book={
		series={Fields Inst. Commun.},
		volume={12},
		publisher={Amer. Math. Soc., Providence, RI},
		},
		date={1997},
		pages={189--212},
		review={\MR{1426840}},
	 }
	 \bib{podles}{article}{
   author={Podle\'{s}, P.},
   title={Symmetries of quantum spaces. Subgroups and quotient spaces of
   quantum ${\rm SU}(2)$ and ${\rm SO}(3)$ groups},
   journal={Comm. Math. Phys.},
   volume={170},
   date={1995},
   number={1},
   pages={1--20},
   issn={0010-3616},
   review={\MR{1331688}},
}
	 \bib{Pask}{article}{
		author={Pask, D.},
		author={Rennie, A.},
		title={The noncommutative geometry of graph $\mathrm{C}^*$-algebras. I. The index
		theorem},
		journal={J. Funct. Anal.},
		volume={233},
		date={2006},
		number={1},
		pages={92--134},
		issn={0022-1236},
		review={\MR{2204676}},
		doi={10.1016/j.jfa.2005.07.009},
	 }
	 \bib{rrs}{article}{
   author={Rennie, A.},
   author={Robertson, D.},
   author={Sims, A.},
   title={Poincar\'{e} duality for Cuntz-Pimsner algebras},
   journal={Adv. Math.},
   volume={347},
   date={2019},
   pages={1112--1172},
   issn={0001-8708},
   review={\MR{3924388}},
   doi={10.1016/j.aim.2019.02.032},
}
	 \bib{Web}{article}{
		author={Schmidt, S.},
		author={Weber, M.},
		title={Quantum symmetries of graph $\mathrm{C}^*$-algebras},
		journal={Canad. Math. Bull.},
		volume={61},
		date={2018},
		number={4},
		pages={848--864},
		issn={0008-4395},
		review={\MR{3846752}},
		doi={10.4153/CMB-2017-075-4},
	 }
	 \bib{schaf}{article}{
   author={Schafhauser, C.P.},
   title={Cuntz-Pimsner algebras, crossed products, and $K$-theory},
   journal={J. Funct. Anal.},
   volume={269},
   date={2015},
   number={9},
   pages={2927--2946},
   issn={0022-1236},
   review={\MR{3394625}},
   doi={10.1016/j.jfa.2015.08.008},
}
	 \bib{sz2010}{article}{
   author={Skalski, A.},
   author={Zacharias, J.},
   title={On approximation properties of Pimsner algebras and crossed
   products by Hilbert bimodules},
   journal={Rocky Mountain J. Math.},
   volume={40},
   date={2010},
   number={2},
   pages={609--625},
   issn={0035-7596},
   review={\MR{2646461}},
   doi={10.1216/RMJ-2010-40-2-609},
}
	 \bib{voigtbc}{article}{
   		author={Voigt, C.},
   		title={The Baum-Connes conjecture for free orthogonal quantum groups},
	   	journal={Adv. Math.},
   		volume={227},
   		date={2011},
   		number={5},
   		pages={1873--1913},
   		issn={0001-8708},
   		review={\MR{2803790}},
   		doi={10.1016/j.aim.2011.04.008},
	}
	 \bib{Wang}{article}{
		author={Wang, S.},
		title={Quantum symmetry groups of finite spaces},
		journal={Comm. Math. Phys.},
		volume={195},
		date={1998},
		number={1},
		pages={195--211},
		issn={0010-3616},
		review={\MR{1637425}},
		doi={10.1007/s002200050385},
	 }
	 \bib{Woro}{article}{
		author={Woronowicz, S.L.},
		title={Compact matrix pseudogroups},
		journal={Comm. Math. Phys.},
		volume={111},
		date={1987},
		number={4},
		pages={613--665},
		issn={0010-3616},
		review={\MR{901157}},
	 }
\end{biblist}
\end{bibdiv}
\end{document}